\def\BC{\mathcal{B}}
\def\E{\mathbf{E}}
\def\P{\mathbf{P}}
\def\R{\mathbf{R}}
\def\1{\mathbf{1}}
\def\Int{\rm{Int}}
\def\al{\alpha}
\def\be{\beta}
\def\pa{\partial}
\def\ep{\epsilon}
\def\de{\delta}
\def\ga{\gamma}
\def\ka{\varkappa}
\newtheorem{prop}{Proposition}[section]
\newtheorem{theorem}{Theorem}[section]
\newtheorem{corollary}{Corollary}
\newtheorem{remark}{Remark}
\newcommand{\la}{\lambda}
\newcommand{\si}{\sigma}
\newcommand{\om}{\omega}
\newcommand{\Om}{\Omega}
\begin{document}
\title{Game theoretic analysis of incomplete markets: emergence of probabilities,
nonlinear and fractional Black-Scholes equations
\thanks{}
\thanks{Supported by the AFOSR grant FA9550-09-1-0664 'Nonlinear Markov control processes and games'}}
\author{Vassili N. Kolokoltsov\thanks{Department of Statistics, University of Warwick,
 Coventry CV4 7AL UK,
  Email: v.kolokoltsov@warwick.ac.uk}}
\maketitle

\begin{abstract}
Expanding the ideas of the author's paper \cite{Ko98} we develop a pure game-theoretic
approach to option pricing, by-passing stochastic modeling. Risk neutral probabilities emerge
automatically from the robust control evaluation. This approach seems to be especially appealing
for incomplete markets encompassing extensive, so to say untamed, randomness, when the coexistence
of infinite number of risk neutral measures precludes one from unified pricing of derivative securities.
Our method is robust enough to be able to accommodate various markets rules and settings including path dependent
payoffs, American options and transaction costs. On the other hand, it leads to rather simple numerical
algorithms. Continuous time limit is described by nonlinear and/or fractional Black-Scholes type equations.
\end{abstract}

{\bf Key words:} robust control, extreme points of risk neutral probabilities,
 dominated hedging, super-replication, transaction cost, incomplete market, rainbow options,
American options, real options, nonlinear Black-Scholes equation, fractional Black-Scholes equation.

{\bf MSC (2010)}:  91G20, 91B25, 90C47, 52A20, 60G22.

\section{Introduction}
\label{secintr}

Expanding the ideas of the author's papers \cite{Ko98}, \cite{Ko99} we develop a pure game-theoretic
approach to option pricing in a multi-dimensional market (rainbow options), where risk neutral probabilities emerge
automatically from the robust control evaluation.
The process of investment is considered as a zero-sum game of an investor with the Nature.

 For basic examples of complete markets, like
binomial model or geometric Brownian motion, our approach yields the same results as the classical
 (by now) risk neutral evaluation developed by Cox-Ross-Rubinstein or Black-Scholes. However, for
 incomplete markets, like for rainbow options in multi-dimensional binomial or interval models, the coexistence
of infinite number of risk neutral measures precludes one from unified pricing of derivative securities
by usual methods. Several competing methods were proposed for pricing options
under these circumstances (see e.g. a review in Bingham and Kiesel \cite{BiRu}), most of them using
certain subjective criteria, say a utility function for payoff or a certain risk measure. The difference in
pricing arising from these methods is justified by referring vaguely to the intrinsic risk of incomplete
markets. In our game-theoretic approach, no subjectivity enters the game. We define and calculate a hedge
price, which is the minimal capital needed to meet the obligation for all performances of
the markets, within the rules specified by the model (dominated hedging).

Though our price satisfies the so called
'no strictly acceptable opportunities' (NSAO) condition suggested in
Carr, Geman and Madan \cite{Carr01},
one still may argue of course that this is not a completely fair price,
as the absence of an exogenously specified initial probability distribution does not allow
us to speak about a.s. performance and implies per force a possibility
of an additional surplus. To address this issue,
we observe that together with the hedging price for buying a security,
that may be called an upper price, one can equally
reasonable define a lower price, which can be looked as a hedge for selling the security. The difference of these two
values can be considered as a precise measure of the intrinsic risk that
 is incident to incomplete markets.
 An alternative way to deal with possible unpredictable surplus, as suggested e.g. in Lyons
 \cite{TerLy} for models with unknown volatility, consists in specifying a possible cash-back, which
 should be due to the holder of an option when the moves of the prices (unpredictable at the beginning) turn out to
 be favorable.

 Our method is robust enough to be able to accommodate various markets rules and settings including path dependent
payoffs, American options, real options and transaction costs. Continuous time limit is described by nonlinear and/or fractional Black-Scholes type equations.

As a possible weakness of our approach we should mention that,
in order to be effective, eligible movements of a
market should be reasonably bounded. Possible big jump
should be taken into account separately, say by means of the theory
of extreme values.

Brief content of the paper is as follows.
In Section \ref{Options} we set a stage by defining the game of an investor with the Nature leading
to the basic game theoretic expression for the hedging price in the simplest case of
a standard European (rainbow) option without transaction costs taken into account.

In the next three sections, which are completely independent of any financial applications,
 we carry out a preparatory work on evaluating certain
rather abstract minmax expressions, showing in particular, how naturally risk neutral probabilities emerge
(and more precisely the extreme points of these probabilities), as if by miracle,
from minimizing Legendre transforms of concave functions defined on polyhedrons.

In Section \ref{secbacktooptions} we apply these results for the evaluation of hedge prices in the simplest
setting. Section \ref{secsubmodular} shows the essential simplifications that become available
for sub-modular payoffs. In particular, a unique risk -neutral selector
can be specified sometimes, say in case of two colored options (for a still incomplete market).
This is of crucial importance, as the major examples of real-life rainbow payoffs
turn out to be sub-modular. Section \ref{sectransco} shows how transaction costs can be nicely fit
into our model. Next two sections are devoted to the modifications needed for more complex models
including path dependent payoffs, American options and transaction costs.
Only in case of precisely $J+1$ possible jumps of a $J$-dimensional vector of stock process
the corresponding market becomes complete.

Section \ref{secupperlowerprice} introduces the dual formulations and explicit expressions for
upper and lower hedging prices. Next two sections are devoted to continuous time limits.
These limits are obtained again without any probability, but only assuming that the magnitude of jumps
per time $\tau$ is of order $\tau^{\al}$, $\al \in [1/2,1]$. Finally,
in Section \ref{secfracdyn}, the model with waiting times having power decay is discussed showing that
its limit is described by a fractional (degenerate and/or nonlinear) version of Black-Scholes equation.

Some bibliographical comments seem to be in order.
Game-theoretic (or robust control) approach for options was used in
McEneaney \cite{Mc}, though in this paper the main point was in proving that the option prices of standard models
can be characterized as viscosity solutions of the corresponding Hamilton-Jacobi equation. As a
by-produce it was confirmed (similarly to analogous results in
Avellaneda, Levy and Par\'as\cite{AveLe} and Lyons \cite{TerLy})
 that  one can hedge prices in stochastic volatility models by the
Black-Scholes strategies specified by the maximal volatility. A related paper is Olsder \cite{Ol}, where only a basic
one-dimensional model was analyzed, though with some transaction costs included.

The reasonability of the extension of the binomial model allowing for price jumps inside the interval
(interval model) was realized by several authors, see Kolokoltsov \cite{Ko98}, Bernard \cite{Bern05},
Aubin, Pujal and Saint-Pierre \cite{AuPu} and Roorda, Engwerda and Schumacher
\cite{RoEnSch}. In the latter paper the term interval model was coined.
The series of papers of P. Bernard et al \cite{Bern05}, \cite{Bern05a}, \cite{Bern07} deals with one-dimensional models with very general strategies and transaction costs
including both continuous and jump-type trading. Arising Hamilton-Jacobi-Bellman
equation have peculiar degeneracies that require subtle techniques to handle.

Hedging by domination (super-replication), rather than replication, is well establish in the literature, especially
in connection with models incorporating transaction costs, see e.g. \cite{BaSo}.
Problems with transaction costs in standard models are well known, as indicates the title 'There
is no non trivial hedging portfolio for option pricing with transaction costs' of the
paper Soner, Shreve and Cvitani\'c
\cite{SSC}. This problem, similar to the story with incomplete markets, leads to the development
of optimizations based on a subjectively chosen utility function, see e.g. Davis and Norman \cite{DaNo} or
Barles and Soner \cite{BaSo}.

Upper and lower values for prices were discussed in many places, see e.g.
El Karoui and Quenez \cite{ElKa} or Roorda, Engwerda and Schumacher.
\cite{RoEnSch}. An abstract definition of lower and upper prices can be given
in the general game-theoretic approach to probability and finances advocated in monograph
Shafer and Vovk \cite{ShV}.

The well known fact that the existing (whatever complicated) stochastic models are far from being
precise reflections of the real dynamics of market prices leads naturally to the attempts to relax the
assumed stochastic restrictions of models. For instance, Avellaneda, Levy and Par\'as\cite{AveLe}
 and Lyons \cite{TerLy} work with unknown volatilities leading
to nonlinear Black-Scholes type equations (though still non-degenerate, unlike those
obtained below). On the other hand, Hobson \cite{Hob98} (see also
\cite{Hob10}, \cite{Hob10a} and references therein) suggests model independent estimates based on the observed
prices of traded securities, the main technique being the Skorohod embedding problem (SEP).
  These approaches still build the theory on some basic
underlying stochastic model (e. g. geometric Brownian motion), unlike our method that starts upfront
with the robust control. Similarly, hedging with respect to several (or all) equivalent martingale
measures, based on the optional decomposition (see
 F\"ollmer and Kramkov \cite{FolKra}
and Kramkov \cite{Kra96}), are based on some initial probability law (with respect to which equivalence
is considered). The risk-neutral or martingale measures that arise from our approach are not linked
to any initial law. They are not equivalent, but represent extreme points of risk-neutral measures on all
possible realizations of a stock price process.

'Fractional everything' becomes a popular topic in modern literature, see e.g. the recent monograph
 Tarasov \cite{Ta11}. For the study of financial markets,
 this is of course a natural step to move from the discussion of power laws in economics (see e.g.
various perspectives in Uchaikin and Zolotarev \cite{UZ}, Newman \cite{New06}, Maslov \cite{Mas05}
and references therein) to the applicability of fractional dynamics in financial markets, see e.g.
Meerschaert and Scala \cite{MSca}, Meerschaert, Nane and Xiao \cite{MSNaXi},
Jumarie \cite{Jum},  Wang \cite{Wa10} and references therein.
Our game-theoretic analysis leads to degenerate and/or nonlinear versions of
fractional Black-Scholes type equations.

{\bf Notations.} By $|z|$ we denote the magnitude (Euclidean norm) of a vector $z$ and by $\|f\|$ the sup-norm
of a function.
We shall denote by $\circ$ the point-wise multiplication of vectors (sometimes
called {\it Hadamard} multiplication):
\[
(y \circ z)^i =y^i z^i.
\]

{\bf Acknowledgements.} The author is grateful to Sigurd Assing, Alain Bensoussan,
David Hobson, Alex Mijatovic and Oleg Malafeyev for useful comments, and to Pierre Bernard for encouraging him
to delve deeper in the topic of the paper.

\section{Colored options as a game against Nature}
\label{Options}

Recall that a European {\it option}\index{option} is a contract between
two parties where one party has right to complete a transaction in
the future (with previously agreed amount, date and price) if he/
she chooses, but is not obliged to do so.
More precisely, consider a financial market dealing with several securities:
the risk-free bonds (or bank account) and $J$ common stocks,
$J=1,2...$. In case $J>1$, the corresponding options are called {\it
colored or rainbow options}\index{option!rainbow or colored}
($J$-colors option for a given $J$). Suppose the prices of the units of these
securities, $B_m$ and $S_m^{i}$, $i\in \{1,2,...,J\}$, change in
discrete moments of time $m=1,2,...$
according to the recurrent equations $B_{m+1}=\rho B_m$, where the
$\rho \geq 1$ is an interest rate which remains unchanged over time,
and $S_{m+1}^{i}=\xi_{m+1}^{i}S_m^i$, where $\xi_m^{i},i\in
\{1,2,...,J\}$, are unknown sequences taking values in some fixed
intervals $M_{i}=[d_{i},u_{i}]\subset \R$. This model
generalizes the colored version of the classical CRR model in a
natural way. In the latter a sequence $\xi_m^{i}$ is confined to
take values only among two boundary points $d_{i},u_{i}$, and it is
supposed to be random with some given distribution. In our model any
value in the interval $[d_{i},u_{i}]$ is allowed and no probabilistic assumptions are made.

The type of an option is specified by a given premium function $f$
of $J$ variables. The following are the standard examples:

option delivering the best of $J$ risky assets and cash
\begin{equation}
f(S^{1},S^{2},...,S^{J})=\max (S^{1},S^{2},...,S^{J},K),
 \label{best risky assets}
\end{equation}

calls on the maximum of $J$ risky assets
\begin{equation}
f(S^{1},S^{2},...,S^{J})=\max (0,\max (S^{1},S^{2},...,S^{J})-K),
\label{calls on max}
\end{equation}

multiple-strike\index{option!multiple-strike} options
\begin{equation}
f(S^{1},S^{2},...,S^{J})=\max
(0,S^{1}-K_{1},S^{2}-K_{2},....,S^{J}-K_{J}),
\label{multiple strike options}
\end{equation}

portfolio\index{option!portfolio} options
\begin{equation}
f(S^{1},S^{2},...,S^{J})=\max
(0,n_{1}S^{1}+n_{2}S^{2}+...+n_{J}S^{J}-K),
\end{equation}

and spread\index{option!spread} options
\begin{equation}
f(S^{1},S^{2})=\max (0,(S^{2}-S^{1})-K).
\end{equation}

Here, the $S^{1},S^{2},...,S^{J}$ represent the (in principle
unknown at the start) expiration date values of the underlying
assets, and $K,K_{1},...,K_{J}$ represent the (agreed from the
beginning) strike prices. The presence of $\max$ in all these
formulae reflects the basic assumption that the buyer is not obliged
to exercise his/her right and would do it only in case of a positive
gain.

The investor is supposed to control the growth of his/her capital in the
following way. Let $X_m$ denote the capital of the investor at the
time $m=1,2,...$. At each time $m-1$ the investor determines his
portfolio by choosing the numbers $\ga _m^i$ of common stocks
of each kind to be held so that the structure of the capital is
represented by the formula

\[
X_{m-1} =\sum_{j=1}^{J}\ga_m^j S_{m-1}^j
 +(X_{m-1}-\sum_{j=1}^{J}\ga_m^j S_{m-1}^j),
\]
where the expression in bracket corresponds to the part of his
capital laid on the bank account. The control parameters $\ga_m^j$
 can take all real values, i.e. short selling and borrowing are
allowed. The value $\xi_m$ becomes known in the moment $m$ and thus
the capital at the moment $m$ becomes

\begin{equation}
\label{eqnewcap1}
X_m=\sum_{j=1}^{J}\ga _m^j\xi_m^j S_{m-1}^j +\rho
(X_{m-1}-\sum_{j=1}^J \ga_m^j S_{m-1}^j),
\end{equation}
if transaction costs are not taken into account.

If $n$ is the prescribed {\it maturity date}\index{option!maturity
date}, then this procedures repeats $n$ times starting from some
initial capital $X=X_0$ (selling price of an option) and at the end
the investor is obliged to pay the premium $f$ to the buyer. Thus
the (final) income of the investor equals
\begin{equation}
\label{eqnewcap2}
G(X_n,S_n^1,S_n^2,...,S_n^J)=X_n-f(S_n^1,S_n^2,...,S_n^J).
\end{equation}

The evolution of the capital can thus be described by the $n$-step
game of the investor with the Nature, the behavior of the latter
being characterized by unknown parameters $\xi_m^j$. The strategy of
the investor is by definition any sequences of vectors
$(\ga_1,\cdots,\ga_n)$
 such that each
$\ga_m$ could be chosen using the whole previous
information: the sequences $X_{0},...,X_{m-1}$ and $S_0^i,...,S_{m-1}^j$
(for every stock $j=1,2,...,J$).
 The control parameters $\ga_m^j$ can take all real values,
i.e. short selling and borrowing are allowed. A position of the game
at any time $m$ is characterized by $J+1$ non-negative numbers
$X_m,S_m^1, \cdots ,S_m^J$ with the final income specified by the
function
\begin{equation}
G(X,S^{1},...,S^{J})=X-f(S^{1},...,S^{J})
\label{eqG function}
\end{equation}

The main definition of the theory is as follows. A strategy $\ga_1,\cdots,\ga_n$, of the investor is called a
{\it hedge}\index{hedge}, if for any sequence
  $(\xi_1, \cdots ,\xi_n)$
the investor is able to meet his/her obligations, i.e.
\[
G(X_{n},S_{n}^{1},...,S_{n}^{J})\geq 0.
\]
The minimal value of the capital $X_{0}$ for which the hedge exists
is called the {\it hedging price} $H$ of an option.

Looking for the guaranteed payoffs means looking
for the worst case scenario (so called {\it robust control
approach}\index{robust control}), i.e. for the minimax strategies. Thus if the
final income is specified by a function $G$, the guaranteed income
of the investor in a one step game with the initial conditions
$X,S^{1},...,S^{J}$ is given by the {\it Bellman
operator}\index{Bellman operator}
\begin{equation}
\label{eqBellmanforopnonred}
\mathbf{B}G(X, S^1,\cdots, S^J)
 =\frac{1}{\rho}\max_{\ga}\min_{\{\xi^j \in [d_j,u_j]\}}
 G(\rho X+ \sum_{i=1}^{J}\ga^i\xi^iS^{i}
  -\rho \sum_{i=1}^{J}\ga^iS^i,\xi^1S^1, \cdots,\xi^JS^J),
\end{equation}
and (as it follows from the standard backward induction argument, see e.g. \cite{Bel60} or
\cite{KoMabook}) the guaranteed income of
the investor in the $n$ step game with the initial conditions
$X_{0},S_{0}^{1},...,S_{0}^{J}$ is given by the formula

\[
\mathbf{B}^{n}G(X_{0},S_{0}^{1},...,S_{0}^{J}).
\]

In our model $G$ is given by \eqref{eqG function}. Clearly for $G$ of the form
\[
G (X,S^1,\cdots, S^J)=X-f(S^1,\cdots,S^J),
\]
\[
\mathbf{B}G(X,S^{1},...,S^{J})
 =X - \frac{1}{\rho}\min_{\ga}\max_{\xi}
 [f(\xi ^1 S^1,\xi^2 S^2, \cdots,\xi^J S^J)
-\sum_{j=1}^J\ga^j S^j(\xi^j-\rho)],
\]
and hence
\[
\mathbf{B}^nG(X,S^1,\cdots,S^J)
= X -\frac{1}{\rho^n}(\BC^{n}f)(S^1, \cdots ,S^J),
\]
where the {\it reduced Bellman operator} is defined as:
\begin{equation}
\label{eqBellmanforop}
(\BC f)(z^1,...,z^J)=\min_{\ga}\max_{\{\xi^j \in [d_j,u_j]\}}
[f(\xi ^1 z^1,\xi^2 z^2, \cdots,\xi^J z^J)
-\sum_{j=1}^J\ga^j z^j(\xi^j-\rho)],
\end{equation}
or, in a more concise notations,
\begin{equation}
\label{eqBellmanforop2}
(\BC f)(z)=\min_{\ga}\max_{\{\xi^j \in [d_j,u_j]\}}
[f(\xi \circ z)-(\ga, \xi \circ z-\rho z)].
\end{equation}

This leads to the following result from \cite{Ko98}.

\begin{theorem}
\label{thoptionprice}
 The minimal value of $X_{0}$ for which the
income of the investor is not negative (and which by definition is
the hedge price $H^n$ in the $n$-step game) is given by
\begin{equation}
H^{n}=\frac{1}{\rho^{n}}(\BC^{n}f)(S_0^1, \cdots , S_0^J).
 \label{hedgeprice}
\end{equation}
\end{theorem}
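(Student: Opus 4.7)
The plan is to prove the theorem by standard backward induction (dynamic programming), exploiting the linearity of the game dynamics in the capital variable $X$ to reduce the non-reduced Bellman operator $\mathbf{B}$ to the reduced one $\BC$ acting only on functions of $S$.

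First I would verify the one-step identity: for any function of the form $G(X,S^1,\dots,S^J) = X - f(S^1,\dots,S^J)$, by direct substitution into \eqref{eqBellmanforopnonred}, the wealth dynamics \eqref{eqnewcap1} give
\[
\mathbf{B}G(X,S) = X - \frac{1}{\rho}\min_{\ga}\max_{\xi}\bigl[f(\xi\circ S) - (\ga, \xi\circ S - \rho S)\bigr] = X - \frac{1}{\rho}(\BC f)(S),
\]
using that the constant-in-$\xi$ term $\rho X$ can be pulled out of the inner $\min_\xi$ (equivalently $\max_\xi$ after the sign flip) and that $\max_\ga \min_\xi(-\Phi) = -\min_\ga \max_\xi \Phi$. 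Iterating this identity, an easy induction on $n$ then yields
\[
\mathbf{B}^n G(X,S) = X - \frac{1}{\rho^n}(\BC^n f)(S),
\]
because $\mathbf{B}$ preserves the class of functions affine in $X$ with coefficient $1$.

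Next I would invoke the dynamic programming principle to identify $\mathbf{B}^n G$ with the value of the $n$-stage game. More precisely, I would argue by backward induction on the number of remaining steps $k$ that for any admissible history-dependent strategy, after $k$ stages the investor's guaranteed terminal payoff starting from state $(X,S)$ equals $\mathbf{B}^k G(X,S)$: the $\max_\ga$ corresponds to the investor's free choice of $\ga_{n-k+1}$ knowing the current state, and the $\min_\xi$ corresponds to the worst-case response of Nature, while the induction hypothesis handles the remaining $k-1$ stages regardless of which $\xi$ was realized. This is the argument referenced to \cite{Bel60,KoMabook}. A hedge exists iff the guaranteed terminal income is non-negative, so $H^n$ is the smallest $X_0$ for which $\mathbf{B}^n G(X_0,S_0) \ge 0$, i.e.\ $X_0 \ge \rho^{-n}(\BC^n f)(S_0)$, giving the claimed formula.

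The main delicate point is justifying the backward induction and the interchange of the $\max_\ga$ and $\min_\xi$ at each stage of the proof of the one-step identity; the former needs the observation that the investor's strategy can depend on the whole past (so the value at stage $m-1$ is computed by conditioning on $(X_{m-1},S_{m-1})$ alone, which holds here because the dynamics are Markovian in $(X,S)$), while the latter uses that $\xi$ ranges over a compact rectangle and the objective is continuous, so a minimax value exists stage by stage; neither interchange of $\min_\ga$ and $\max_\xi$ beyond what is already written in \eqref{eqBellmanforop} is needed, since the definition of $H^n$ naturally gives a $\max$-over-investor-strategies outside a $\min$-over-Nature-strategies, matching the order in which $\mathbf{B}$ is applied.
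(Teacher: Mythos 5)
Your proposal is correct and follows essentially the same route as the paper, which derives the identity $\mathbf{B}^nG(X,S)=X-\rho^{-n}(\BC^nf)(S)$ for $G(X,S)=X-f(S)$ in the text preceding the theorem and appeals to the standard backward-induction (dynamic programming) argument of \cite{Bel60}, \cite{KoMabook}. Your additional remarks on the Markovian structure in $(X,S)$ and on the sign-flip identity $\max_\ga\min_\xi(-\Phi)=-\min_\ga\max_\xi\Phi$ simply make explicit what the paper leaves implicit.
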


We shall develop a method for evaluating the operator \eqref{eqBellmanforop},
as well as its modifications for American options or when transaction costs are taken into account.

\section{Underlying game-theoretic setting}
\label{secunderlyinggt}

In this section we develop a general technique for the evaluation of minmax expressions of type
\eqref{eqBellmanforop2} showing how naturally the extreme risk neutral probabilities arise
in such evaluation. We also supply geometric estimations for these probabilities  and the corresponding
minimizing value of $\ga$, which are crucial for a nonlinear extension given in Section \ref{secnonlinext}.
In order to explain the ideas clearly, we first develop the theory in dimension $d=2$, and then extend it
to arbitrary dimensions (which requires certain lengthy manipulation with multidimensional determinants).

We shall denote by $\Int$ the interior of a closed set.
Let a closed convex polygon in $\R^2$ contains the origin as an interior point,
 and let $\xi_1, \cdots , \xi_k$ be its vertices, ordered anticlockwise. We shall denote such a polygon by $\Pi=\Pi[\xi_1, \cdots , \xi_k]$. The assumed condition
\begin{equation}
\label{eqzeroint}
0\in {\Int} \Pi[\xi_1, \cdots , \xi_k]
\end{equation}
implies that all $\xi_i$ do not vanish.

We are interested in the following game-theoretic problem: find
\begin{equation}
\label{eq1mainminmaxop}
\Pi[\xi_1, \cdots , \xi_k](f)=\min_{\ga \in \R^2} \max_{\xi \in \Pi} [f(\xi)-(\xi,\ga)]
\end{equation}
for a convex (possibly non strictly) function $f$. By convexity, this rewrites as
\begin{equation}
\label{eq2mainminmaxop}
\Pi[\xi_1, \cdots , \xi_k](f)=\min_{\ga \in \R^2} \max_{\xi_1,\cdots, \xi_k} [f(\xi_i)-(\xi_i,\ga)].
\end{equation}

Having this in mind, we shall analyze a slightly more general problem: for an arbitrary finite collection
of non-vanishing vectors $\xi_1, \cdots , \xi_k$ from $\R^2$, ordered anticlockwise, and
arbitrary numbers $f(\xi_1),\cdots, f(\xi_k)$, to calculate \eqref{eq2mainminmaxop}
(whenever the minimum exists). The corresponding polygon $\Pi[\xi_1, \cdots , \xi_k]$ (obtained by linking
together all neighboring vectors $\xi_i, \xi_{i+1}$,
$i=1,\cdots,k$, and $\xi_k, \xi_1$ with straight segments) may not be convex anymore.

We shall start with the case of $\Pi$ being a triangle: $\Pi=\Pi[\xi_1,\xi_2,\xi_3]$.
Then condition \eqref{eqzeroint}
implies that $\xi_i\neq -\al \xi_j$ for $\al >0$ and any $i,j$.
Suppose the $\min$ in
\begin{equation}
\label{eq3mainminmaxop}
\Pi[\xi_1, \xi_2, \xi_3](f)=\min_{\ga \in \R^2} \max_{\xi_1,\xi_2, \xi_3} [f(\xi_i)-(\xi_i,\ga)]
\end{equation}
is attained on a vector $\ga_0$ and the corresponding $\max$ on a certain $\xi_i$. Suppose this $\max$ is unique,
so that
 \begin{equation}
\label{eq1mainminmaxopsol}
 f(\xi_i)-(\xi_i,\ga)>  f(\xi_j)-(\xi_j,\ga)
\end{equation}
for all $j\neq i$. As $\xi_i\neq 0$, by changing $\ga_0$ on a small amount we can reduce the l.h.s. of
\eqref{eq1mainminmaxopsol} by preserving the inequality \eqref{eq1mainminmaxopsol}. This possibility contradicts
the assumption that $\ga_0$ is a minimal point. Hence, if  $\ga_0$ is a minimal point, the corresponding maximum must be attained on at least two vectors. Suppose it is attained on precisely two vectors, that is
  \begin{equation}
\label{eq2mainminmaxopsol}
 f(\xi_i)-(\xi_i,\ga)=f(\xi_j)-(\xi_j,\ga)>f(\xi_m)-(\xi_m,\ga)
\end{equation}
for some different $i,j,m$. Since the angle between $\xi_i,\xi_j$ is strictly less than $\pi$, adding a vector
\[
\ep (\xi_i/|\xi_j|+\xi_j/|\xi_i|)
\]
 to $\ga_0$
will reduce simultaneously first two expressions from the l.h.s. of \eqref{eq2mainminmaxopsol}, but preserve (for small enough $\ep$) the inequality on the r.h.s. of \eqref{eq2mainminmaxopsol}. This again
contradicts
the assumption that $\ga_0$ is a minimal point. Hence, if  $\ga_0$ is a minimal point, it must
satisfy the equation
\begin{equation}
\label{eq3mainminmaxopsol}
f(\xi_1)-(\xi_1,\ga)=f(\xi_2)-(\xi_2,\ga)=f(\xi_3)-(\xi_3,\ga),
\end{equation}
which is equivalent to the system
   \begin{equation}
\label{eq4mainminmaxopsol}
\left\{
\begin{aligned}
 (\xi_2-\xi_1, \ga_0)=f(\xi_2)-f(\xi_1),
  \\
  (\xi_3-\xi_1, \ga_0)=f(\xi_3)-f(\xi_1).
\end{aligned}
\right.
\end{equation}
Again by assumption
\eqref{eqzeroint}, the vectors $\xi_2-\xi_1, \xi_3-\xi_1$ are independent. Hence system
\eqref{eq4mainminmaxopsol} has a unique solution $\ga_0$.

For a pair of vectors $u,v\in \R^2$, let $D(u,v)$ denote the oriented area of the parallelogram built on $u,v$ and $R(u)$ the result of the rotation of $u$ on $90^{\circ}$ anticlockwise. That is, for $u=(u^1,u^2)$, $v=(v^1,v^2)$,
\[
D(u,v)=u^1v^2-u^2v^1, \quad R(u)=(u^2,-u^1).
\]
Notice that the determinant of system \eqref{eq4mainminmaxopsol} is
\[
D(\xi_2-\xi_1,\xi_3-\xi_1)=D(\xi_2,\xi_3)+D(\xi_3,\xi_1)+D(\xi_1,\xi_2),
\]
and by the standard formulas of linear algebra, the unique solution $\ga_0$ is
 \begin{equation}
 \label{eq5mainminmaxopsol}
 \ga_0=\frac{f(\xi_1) R(\xi_2-\xi_3)+f(\xi_2) R(\xi_3-\xi_1)+f(\xi_3) R(\xi_1-\xi_2)}
 {D(\xi_2,\xi_3)+D(\xi_3,\xi_1)+D(\xi_1,\xi_2)},
\end{equation}
and the corresponding optimal value
 \begin{equation}
 \label{eq6mainminmaxopsol}
 \Pi[\xi_1,\xi_2,\xi_3](f)=\frac{f(\xi_1) D(\xi_2,\xi_3)+f(\xi_2) D(\xi_3,\xi_1)+f(\xi_3) D(\xi_1,\xi_2)}
 {D(\xi_2,\xi_3)+D(\xi_3,\xi_1)+D(\xi_1,\xi_2)}.
\end{equation}
Hence we arrive at the following.

\begin{prop}
\label{propthreepointriskneutral}
Let a triangle $\Pi[\xi_1,\xi_2,\xi_3]$ satisfy \eqref{eqzeroint}, and let
$f(\xi_1), f(\xi_2), f(\xi_3)$ be arbitrary numbers. Then expression \eqref{eq3mainminmaxop} is given by
\eqref{eq6mainminmaxopsol} and the minimum is attained on the single $\ga_0$ given by
\eqref{eq5mainminmaxopsol}.
\end{prop}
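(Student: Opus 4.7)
The plan is to establish the proposition by a direct variational argument: show first that any minimizer $\ga_0$ must equalize all three expressions $f(\xi_i)-(\xi_i,\ga_0)$, and then solve the resulting linear system by Cramer's rule to obtain the closed-form formulas \eqref{eq5mainminmaxopsol} and \eqref{eq6mainminmaxopsol}. Existence of a minimizer is the preliminary step: the function $\ph(\ga)=\max_i[f(\xi_i)-(\xi_i,\ga)]$ is continuous and convex, and condition \eqref{eqzeroint} makes it coercive, because for any unit direction $e$ the representation of $-e$ as a nonnegative combination of $\xi_1,\xi_2,\xi_3$ forces $(\xi_i,e)<0$ for at least one $i$, so $-(\xi_i,te)\to+\infty$ as $t\to+\infty$.

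The main structural step is to rule out that the inner $\max$ is attained at fewer than three of the $\xi_i$. If it were attained at a single $\xi_i$, then because $\xi_i\neq 0$ one could shift $\ga_0$ slightly in the direction of $\xi_i$ to reduce $f(\xi_i)-(\xi_i,\ga_0)$ without violating the strict inequalities against the other two vertices, contradicting minimality. If it were attained at exactly two vertices $\xi_i,\xi_j$, the crucial geometric point is that \eqref{eqzeroint} forces the angle between $\xi_i$ and $\xi_j$ to be strictly less than $\pi$ (they cannot lie on opposite rays through the origin). Hence the vector $\xi_i/|\xi_j|+\xi_j/|\xi_i|$ has strictly positive inner product with both $\xi_i$ and $\xi_j$, and adding a small positive multiple of it to $\ga_0$ simultaneously decreases both equal maxima while preserving the strict inequality against the third vertex, again contradicting minimality.

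Consequently $\ga_0$ must satisfy the equalities $f(\xi_1)-(\xi_1,\ga_0)=f(\xi_2)-(\xi_2,\ga_0)=f(\xi_3)-(\xi_3,\ga_0)$, which is the linear system \eqref{eq4mainminmaxopsol}. Here \eqref{eqzeroint} enters again: since $0$ is interior to the triangle, the three vertices are non-collinear, so $\xi_2-\xi_1$ and $\xi_3-\xi_1$ are linearly independent. Thus the system has the unique solution given by Cramer's rule; expanding the determinants using the identity
\[
D(\xi_2-\xi_1,\xi_3-\xi_1)=D(\xi_2,\xi_3)+D(\xi_3,\xi_1)+D(\xi_1,\xi_2)
\]
and the analogous formulas for the numerator determinants yields \eqref{eq5mainminmaxopsol}. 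Substituting this $\ga_0$ into the common value $f(\xi_1)-(\xi_1,\ga_0)$ and simplifying gives \eqref{eq6mainminmaxopsol}.

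The main obstacle I anticipate is the two-vertex exclusion step: one must produce an explicit perturbation direction that strictly decreases both active maxima, and this rests on translating the topological condition $0\in\Int\Pi$ into the quantitative statement that any two vertices subtend an angle $<\pi$ at the origin. Once this is secured, the remaining work is a coercivity check for existence and routine $2\times 2$ linear algebra for the closed-form solution.
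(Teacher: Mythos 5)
Your argument is essentially identical to the paper's own proof: the same exclusion of one- and two-active-vertex minimizers by explicit perturbation, the same reduction to the linear system \eqref{eq4mainminmaxopsol} solved by Cramer's rule, and the same coercivity observation guaranteeing existence of the minimizer. One small quibble (shared with the paper, whose proof uses the very same vector): the direction $\xi_i/|\xi_j|+\xi_j/|\xi_i|$ need not have positive inner product with both $\xi_i$ and $\xi_j$ when their lengths differ greatly (e.g.\ $\xi_i=(1,0)$, $\xi_j=(-5,5)$ gives $(\xi_i/|\xi_j|+\xi_j/|\xi_i|,\xi_i)<0$); the normalization $\xi_i/|\xi_i|+\xi_j/|\xi_j|$, whose inner product with $\xi_i$ is $|\xi_i|(1+\cos\theta)>0$ for $\theta<\pi$, is what actually makes the two-vertex exclusion go through, though this is a one-line repair that does not affect the scheme.
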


\begin{proof}
Our discussion above shows that if $\ga_0$ is a minimum point, then it is unique and given by \eqref{eq5mainminmaxopsol}.
It remains to show that this $\ga_0$ is in fact the minimal point. But this is straightforward, as any change in $\ga_0$
would necessarily increase one of the expressions $f(\xi_i)-(\xi_i,\ga)$ (which again follows from
\eqref{eqzeroint}). Alternatively, the same conclusion can be obtained indirectly from the observation
that the minimum exists and is attained on some finite $\ga$, because
\[
  \max_{\xi_1,\xi_2,\xi_3} [f(\xi_i)-(\xi_i,\ga)] \to \infty,
  \]
as $\ga \to \infty$.
\end{proof}

\begin{corollary}
Expression \eqref{eq6mainminmaxopsol} can be written equivalently as
\[
  \Pi[\xi_1,\xi_2,\xi_3](f)=\E f(\xi),
  \]
  where the expectation is defined with respect to the probability law $\{p_1,p_2,p_3\}$ on $\xi_1,\xi_2,\xi_3$:
  \[
  p_i=\frac{D(\xi_j,\xi_m)}{D(\xi_2,\xi_3)+D(\xi_3,\xi_1)+D(\xi_1,\xi_2)}
  \]
  ($(i,j,k)$ is either (1,2,3) or (2,3,1) or (3,1,2)). Moreover, this distribution is the unique
  probability on $\xi_1,\xi_2,\xi_3$ such that
   \begin{equation}
 \label{eq7mainminmaxopsol}
 \E(\xi)=\sum_{i=1}^3 p_i \xi_i=0.
 \end{equation}
  \end{corollary}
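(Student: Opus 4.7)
The plan is to chain three elementary verifications: (i) the weights $p_i$ are a bona fide probability distribution, (ii) the expression in \eqref{eq6mainminmaxopsol} is literally $\E f(\xi)$, and (iii) the system for a $\xi$-neutralizing distribution is nondegenerate, so the solution $(p_1,p_2,p_3)$ is unique.

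For (i), the normalization $p_1+p_2+p_3=1$ is immediate from the defining ratio. Non-negativity is where I must use the geometric hypothesis \eqref{eqzeroint}: since the $\xi_i$ are ordered anticlockwise and $0\in\Int\Pi[\xi_1,\xi_2,\xi_3]$, each consecutive pair $\xi_i,\xi_{i+1}$ (cyclic indices) spans a strictly positive angle at the origin, so $D(\xi_i,\xi_{i+1})>0$. In particular the common denominator $D(\xi_2,\xi_3)+D(\xi_3,\xi_1)+D(\xi_1,\xi_2)$ is positive (it equals twice the area of $\Pi[\xi_1,\xi_2,\xi_3]$). Once (i) is in hand, (ii) is a pure rewriting of \eqref{eq6mainminmaxopsol}, namely $\Pi[\xi_1,\xi_2,\xi_3](f)=\sum_{i=1}^3 p_i f(\xi_i)$.

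The key computation for (iii) is the identity
\[
D(\xi_2,\xi_3)\,\xi_1+D(\xi_3,\xi_1)\,\xi_2+D(\xi_1,\xi_2)\,\xi_3=0,
\]
valid for any three vectors in $\R^2$. I would verify it componentwise (each coordinate produces six triple products that cancel pairwise), which gives $\E(\xi)=\sum p_i\xi_i=0$ after dividing by the common denominator. Alternatively, one may specialize Proposition \ref{propthreepointriskneutral} to a linear function $f(\xi)=(a,\xi)$: the choice $\ga=a$ makes $f(\xi)-(\xi,\ga)\equiv 0$, so $\Pi(f)=0$, and since the right-hand side equals $(a,\sum p_i\xi_i)$ for every $a\in\R^2$, the vanishing of $\E(\xi)$ follows.

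For uniqueness, any probability $(q_1,q_2,q_3)$ on $\{\xi_1,\xi_2,\xi_3\}$ with $\sum q_i\xi_i=0$ satisfies a $3\times 3$ linear system (two scalar equations from $\sum q_i\xi_i=0$ together with $\sum q_i=1$) whose coefficient determinant is precisely $D(\xi_2-\xi_1,\xi_3-\xi_1)$, the doubled oriented area of $\Pi[\xi_1,\xi_2,\xi_3]$. This determinant is nonzero by \eqref{eqzeroint} (which forces $\xi_1,\xi_2,\xi_3$ to be affinely independent — otherwise the origin could not sit in the interior of their convex hull), so the solution is unique and must coincide with $(p_1,p_2,p_3)$. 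The only mildly delicate step is the geometric justification of $D(\xi_i,\xi_{i+1})>0$ in (i); everything else reduces to the identity displayed above and a standard nondegenerate-linear-system argument.
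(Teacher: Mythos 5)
Your proof is correct and follows essentially the same route as the paper: the paper's own proof consists of the single observation that uniqueness follows from the uniqueness of the expansion of $\xi_3$ with respect to the basis $\xi_1,\xi_2$, which is the same nondegeneracy fact as your $3\times 3$ system with determinant $D(\xi_2-\xi_1,\xi_3-\xi_1)\neq 0$. The remaining verifications you supply --- positivity of the weights from the anticlockwise ordering together with \eqref{eqzeroint}, and the identity $D(\xi_2,\xi_3)\xi_1+D(\xi_3,\xi_1)\xi_2+D(\xi_1,\xi_2)\xi_3=0$ (or its elegant reformulation via linear payoffs $f(\xi)=(a,\xi)$) --- are details the paper leaves implicit here (checking the barycenter condition only in the $d$-dimensional analogue, via a determinant with a repeated row), and they are all correct.
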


\begin{proof}
Required uniqueness follows from the uniqueness of the expansion of $\xi_3$ with respect to the basis $\xi_1,\xi_2$.
\end{proof}

  We shall call a probability law on $\xi_1,\xi_2,\xi_3$ {\it risk-neutral},
  if it satisfies \eqref{eq7mainminmaxopsol}. The reason for this terminology will be seen later.
  From the point of view of convex analysis this is just a probability on $\xi_1,\xi_2,\xi_3$
  with barycenter in the origin.

  We can now calculate \eqref{eq1mainminmaxop} for arbitrary $k$.
 \begin{theorem}
\label{thpolygonriskneutral}
 Let a polygon $\Pi=\Pi[\xi_1, \cdots, \xi_k]$ satisfy
 the following conditions:

(i) No two vectors $\xi_i, \xi_j$ are linearly dependent;

(ii) The collection $\{\xi_1, \cdots, \xi_k\}$ does not belong to any half-space, i.e. there is no $\om \in \R^2$
such that $(\om,\xi_i)>0$ for all $i$.

Then
\begin{equation}
\label{eq8mainminmaxopsol}
\Pi[\xi_1, \cdots , \xi_k](f)=\max_{i,j,m} \E_{ijm}f(\xi)
= \max_{i,j,m} (p_i^{ijm} f(\xi_i)+p_j^{ijm} f(\xi_j)+p_m^{ijm} f(\xi_m)),
\end{equation}
where $\max$ is taken over all triples $1\le i <j<m \le k$ such that
\begin{equation}
\label{eq9mainminmaxopsol}
0\in {\Int} \Pi [\xi_i,\xi_j,\xi_k],
\end{equation}
and $\{p_i^{ijm},p_j^{ijm},p_m^{ijm}\}$ denotes the unique risk neutral probability
on $\{\xi_i,\xi_j, \xi_m\}$ (given by Proposition \ref{propthreepointriskneutral})
with $\E_{ijm}$ the corresponding expectation.
\end{theorem}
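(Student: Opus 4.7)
The plan is to localize the minimizer $\ga_0$ on the ``right'' sub-triangle and then invoke Proposition \ref{propthreepointriskneutral}. First I would verify that the outer minimum is attained. Conditions (i) and (ii) together force $0$ to lie in the \emph{interior} of the convex hull of $\{\xi_1,\dots,\xi_k\}$: (ii) places $0$ in the closed convex hull by a standard separation argument, and (i) prevents $0$ from sitting on any boundary edge, which would require two of the vectors to be negative multiples. Consequently, for every direction $\om\neq 0$ some $\xi_i$ satisfies $(\om,\xi_i)<0$, so $\max_i[f(\xi_i)-(\xi_i,\ga)]\to\infty$ as $|\ga|\to\infty$ and a minimizer $\ga_0$ exists on a compact set.

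Next I would identify the active index set $A=\{i:f(\xi_i)-(\xi_i,\ga_0)=\Pi[\xi_1,\cdots,\xi_k](f)\}$. Reusing the perturbation idea from Proposition \ref{propthreepointriskneutral}, if there were a vector $\om$ with $(\om,\xi_i)>0$ for every $i\in A$, then replacing $\ga_0$ by $\ga_0+\ep\om$ would strictly decrease all active values while, by continuity, keeping the non-active ones strictly below them for small $\ep$, contradicting minimality. Hence no open half-space contains $\{\xi_i:i\in A\}$, which is equivalent to $0\in\mathrm{conv}\{\xi_i:i\in A\}$. By Caratheodory's theorem in $\R^2$ one can then extract a triple $(i,j,m)\subset A$ whose triangle contains $0$; using (i) once more (together with the fact that each $\xi_l\neq 0$), $0$ cannot lie on a boundary edge of that triangle, so $0\in\Int\Pi[\xi_i,\xi_j,\xi_m]$ as required in \eqref{eq9mainminmaxopsol}.

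Since $i,j,m\in A$, the three values $f(\xi_l)-(\xi_l,\ga_0)$, $l=i,j,m$, coincide, which is exactly the system \eqref{eq3mainminmaxopsol}--\eqref{eq4mainminmaxopsol} attached to the triangle $\Pi[\xi_i,\xi_j,\xi_m]$; Proposition \ref{propthreepointriskneutral} and its corollary then identify this common value with $\E_{ijm}f(\xi)$. To close the argument I would invoke the elementary monotonicity: for any triple $(i,j,m)$ satisfying \eqref{eq9mainminmaxopsol},
\[
\Pi[\xi_1,\cdots,\xi_k](f)=\min_{\ga}\max_{1\le l\le k}[f(\xi_l)-(\xi_l,\ga)]\ge \min_{\ga}\max_{l\in\{i,j,m\}}[f(\xi_l)-(\xi_l,\ga)]=\E_{ijm}f(\xi),
\]
so $\Pi[\xi_1,\cdots,\xi_k](f)$ dominates the right-hand side of \eqref{eq8mainminmaxopsol}; combined with the equality realized at $\ga_0$ for some specific triple, this yields the theorem. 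The main obstacle in this plan is the Caratheodory extraction step: a priori the active set $A$ can be large, and one has to rule out the degenerate configuration where $0$ slips onto a chord of the extracted triangle --- this is precisely where hypothesis (i) does essential work.
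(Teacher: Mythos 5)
Your proposal is correct and follows essentially the same route as the paper's proof: the lower bound by restricting the inner maximum to an eligible triple, coercivity to secure a minimizer $\ga_0$, the perturbation argument showing the active set at $\ga_0$ cannot lie in an open half-space, Carath\'eodory extraction of a triple containing the origin in its interior, and identification of the common active value via Proposition \ref{propthreepointriskneutral}. Your explicit bookkeeping of the active set $A$ and the separation-theorem equivalence is in fact a slightly cleaner organization of the same argument the paper states more informally.
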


\begin{remark}
Condition (i) is equivalent to the geometrical requirement that
the origin does not lie on any diagonal of $\Pi$ (or its extension), and condition (ii) is equivalent to
\eqref{eqzeroint}.
\end{remark}

\begin{proof}
 For any triple $\{i,j,m\}$ satisfying \eqref{eq9mainminmaxopsol},
\[
\Pi[\xi_1, \cdots , \xi_k](f)
\ge \min_{\ga \in \R^2} \max_{\xi_i,\xi_j,\xi_m} [f(\xi)-(\xi,\ga)]
=\E_{ijm}f(\xi),
\]
where Proposition \ref{propthreepointriskneutral} was used for the last equation. Hence
  \begin{equation}
\label{eq10mainminmaxopsol}
\Pi[\xi_1, \cdots , \xi_k](f)\ge \max_{i,j,m} \E_{ijm}f(\xi).
\end{equation}

A key geometrical observation is the following. Conditions (i) and (ii) imply that there exists
a subset of the collection $\{\xi_1, \cdots, \xi_k\}$ consisting only of three vectors $\{\xi_i,\xi_j,\xi_m\}$,
but still satisfying these conditions (and hence the
assumptions of Proposition \ref{propthreepointriskneutral}).
This follows from the Carath\'eodory theorem (but can be also seen directly, as one can take an arbitrary
$\xi_i$, and then choose, as $\xi_m,\xi_j$, the vectors with the maximum angle (less than $\pi$) with $\xi_i$ when rotating clockwise and anticlockwise respectively).  This observation implies that the maximum on the r.h.s of
\eqref{eq10mainminmaxopsol} is defined (the set of triples is not empty) and consequently the l.h.s. is bounded from below.
Moreover, as for any triple  $\{i,j,m\}$ satisfying \eqref{eq9mainminmaxopsol},
\[
\max_{\xi_i,\xi_j,\xi_m} [f(\xi)-(\xi,\ga)] \to \infty,
\]
as $\ga \to \infty$, and hence also
\[
\max_{i=1,\dots,k} [f(\xi_i)-(\xi_i,\ga)] \to \infty,
  \]
 the minimum in \eqref{eq2mainminmaxop} is attained on some finite $\ga$.
Assuming that $\ga_0$ is such a minimum point, we can now argue as above to conclude that
\begin{equation}
\label{eq11mainminmaxopsol}
 f(\xi_i)-(\xi_i,\ga_0)=f(\xi_j)-(\xi_j,\ga_0)=f(\xi_m)-(\xi_m,\ga_0)
\end{equation}
for some triple $\xi_i,\xi_j,\xi_m$. Moreover, if these triple does not satisfy (i) and (ii), then
(by the same argument) the l.h.s. of \eqref{eq11mainminmaxopsol} can not strictly exceed
$f(\xi_l)-(\xi_l,\ga_0)$ for all other $\xi_l$. Hence we are led to a conclusion that if $\ga_0$ is a minimum
point, then there exists a subset $I\subset\{1,\cdots , k\}$ such that the expressions $f(\xi_l)-(\xi_l,\ga_0)$
coincide for all $l\in I$ and the family $\{\xi_l\}$, $l\in I$, satisfy conditions (i), (ii). But by the above
geometrical observation, such a family has to contain a subfamily with three vectors only satisfying (i) and (ii).
Consequently, \eqref{eq8mainminmaxopsol} holds.
\end{proof}

\begin{remark}
It is easy to see that the number of allowed triples $\{i,j,m\}$ on the r.h.s. of
\eqref{eq8mainminmaxopsol} is two for $k=4$, can be $3,4$ or $5$ (depending on the position of the origin
inside $\Pi$) for $k=5$, and can be $4,6$ or $8$ for $k=6$. This number seems to increase exponentially,
as $k\to \infty$.
\end{remark}

\begin{remark} Theorem \ref{thpolygonriskneutral} can be easily extended to the situation when
conditions (i) and/or (ii) are not satisfied. Namely, if (ii) does not hold, then the l.h.s. of
\eqref{eq2mainminmaxop} is not defined (equals to $-\infty$). If (i) does not hold, then
the $\max$ on the r.h.s of \eqref{eq8mainminmaxopsol} should be over all eligible triples plus all risk
neutral expectations over all pairs such that $\xi_i=-\al \xi_j$, $\al>0$.
\end{remark}

Let us extend the results to higher dimensions $d$. Let us start with the
simplest case of $d+1$ vectors $\xi_1,\cdots,\xi_{d+1}$ in $\R^d$. Suppose their convex hull
$\Pi[\xi_1, \cdots , \xi_{d+1}]$ is such that
\begin{equation}
\label{eq2mainminmaxoparbd}
0\in {\Int} \Pi[\xi_1, \cdots , \xi_{d+1}].
\end{equation}

We are interested in evaluating the expression
\begin{equation}
\label{eq1mainminmaxoparbd}
\Pi[\xi_1, \cdots , \xi_{d+1}](f)=\min_{\ga \in \R^d} \max_i [f(\xi_i)-(\xi_i,\ga)].
\end{equation}
A remarkable fact that we are going to reveal is linear in $f$ and the minimizing $\ga$ is
unique and also depends linearly on $f$.

Assume that $\R^d$ is equipped with the standard basis $e_1, \cdots, e_d$ fixing the orientation.
Without loss of generality we shall assume now that the vectors $\xi_1, \cdots , \xi_{d+1}$
are ordered in such a way that the vectors $\{\xi_2, \xi_3, \cdots, \xi_{d+1}\}$ form an oriented basis of $\R^d$.
The fact that the vector $\xi_1$ lies outside any half space containing
this basis, allows one to identify the orientation of other subsets of $\xi_1, \cdots , \xi_{d+1}$
of size $d$. Namely, let $\{\hat \xi_i \}$ denote the ordered subset of $\xi_1, \cdots , \xi_{d+1}$
obtained by taking $\xi_i$ out of it. The basis $\{\hat \xi_i \}$ is oriented if and only if $i$ is odd.
For instance, if $d=3$, the oriented bases form the triples $\{\xi_2, \xi_3, \xi_4\}$, $\{\xi_1, \xi_2, \xi_4\}$,
 $\{\xi_1, \xi_4, \xi_3\}$ and $\{\xi_1, \xi_3, \xi_2\}$.

The same argument as for $d=2$ leads us to the conclusion that a minimal point $\ga_0$ must
satisfy the equation
   \begin{equation}
\label{eq3mainminmaxoparbd}
 f(\xi_1)-(\xi_1,\ga)=\cdots =f(\xi_{d+1})-(\xi_{d+1},\ga),
\end{equation}
which is equivalent to the system
   \begin{equation}
\label{eq4mainminmaxoparbd}
 (\xi_i-\xi_1, \ga_0)=f(\xi_i)-f(\xi_1), \quad i=2, \cdots, d+1.
\end{equation}
From \eqref{eq2mainminmaxoparbd} it follows that
 this system
 has a unique solution, say $\ga_0$.

To write it down explicitly, we shall use the natural extensions of the notations used above for $d=2$.
For a collection of $d$ vectors $u_1, \cdots, u_d \in \R^d$, let $D(u_1, \cdots, u_d)$ denote the oriented volume of the parallelepiped built on $u_1, \cdots, u_d$ and $R(u_1, \cdots, u_{d-1})$ the rotor of the family
 $(u_1, \cdots, u_{d-1})$. That is, denoting by upper scripts the coordinates of vectors,
\[
D(u_1, \cdots, u_d)=\det \left(
 \begin{aligned}
& u^1_1 \quad \cdots \quad u^d_1  \\
& u^1_2 \quad \cdots \quad u^d_2  \\
& \quad \quad \cdots \quad \quad \\
& u^1_d \quad \cdots \quad u^d_d
\end{aligned}
\right),
 \quad R(u_1, \cdots, u_{d-1})=\det \left(
 \begin{aligned}
& e_1 \quad \quad \cdots \quad \quad e_d  \\
& u_1^1 \quad \quad \cdots \quad \quad u_1^d \\
& \quad \quad \quad \cdots \quad \quad \\
& u_{d-1}^1 \quad \cdots \quad u_{d-1}^d
\end{aligned}
 \right)
 \]
 \[
 = e_1 \det \left(
 \begin{aligned}
& u_1^2 \quad \quad \cdots \quad \quad u_1^d  \\
& \quad \quad \quad \cdots \quad \quad \\
& u_{d-1}^2 \quad \cdots \quad u_{d-1}^d
\end{aligned}
\right)
-e_2 \det \left(
 \begin{aligned}
& u_1^1 \quad \quad u_1^3 \quad \cdots \quad \quad u_1^d  \\
& \quad \quad \quad \cdots \quad \quad \\
& u_{d-1}^1 \quad u_{d-1}^3 \quad \cdots \quad u_{d-1}^d
\end{aligned}
\right)
+ \cdots .
\]

Finally, let us define a poly-linear operator $\tilde R$ from an ordered collection
$\{u_1,\cdots, u_d\}$ of $d$ vectors in $\R^d$ to $\R^d$:
\[
\tilde R(u_1,\cdots, u_d)
=R(u_2-u_1,u_3-u_1, \cdots, u_d-u_1)
\]
\[
=R(u_2,\cdots, u_d)-R(u_1,u_3, \cdots, u_d)+\cdots
+ (-1)^{d-1} R(u_1,\cdots, u_{d-1}).
\]

Returning to system \eqref{eq4mainminmaxoparbd} observe that its determinant,
 which we denote by $D$, equals
\[
D=D(\xi_2-\xi_1,\cdots, \xi_{d+1}-\xi_1)
 =\det \left(
 \begin{aligned}
& \xi_2^1-\xi_1^1 \quad \quad \xi_2^2-\xi_1^2 \quad \cdots \quad \quad \xi_2^d-\xi_1^d  \\
& \quad \quad \quad \quad \cdots \quad \\
& \xi_{d+1}^1-\xi_1^1 \quad \xi_{d+1}^2-\xi_1^2 \quad \cdots \quad \xi_{d+1}^d-\xi_1^d
\end{aligned}
\right)
\]
Using the linear dependence of a determinant on columns, this rewrites as
\[
D(\xi_2,\cdots, \xi_{d+1})
 -\xi_1^1 \det \left(
 \begin{aligned}
& 1 \quad \, \, \xi^2_2 \quad \cdots \quad \,\, \xi^d_2  \\
& \quad \quad \cdots \quad   \\
& 1 \quad \xi^2_{d+1} \quad \cdots \quad \xi_{d+1}^d
\end{aligned}
\right)
-\xi_1^2 \det \left(
 \begin{aligned}
& \xi_2^1 \quad \quad 1 \quad \xi^3_2 \quad \cdots \quad \quad \xi^d_2  \\
& \quad \quad \cdots \quad   \\
& \xi_{d+1}^1 \quad 1 \quad \xi^3_{d+1} \quad \cdots \quad \xi_{d+1}^d
\end{aligned}
\right)
 -\cdots,
 \]
 implying that
\begin{equation}
\label{eq5mainminmaxoparbd}
D=D(\xi_2-\xi_1,\cdots, \xi_{d+1}-\xi_1)
=\sum_{i=1}^{d+1} (-1)^{i-1} D(\{\hat \xi_i \}).
\end{equation}

Notice that according to the orientation specified above,  $D(\{\hat \xi_i \})$
are positive (resp. negative) for odd $i$ (resp. even $i$), implying that
all terms in \eqref{eq5mainminmaxoparbd} are positive, so that the collection
of numbers
\begin{equation}
\label{eq6mainminmaxoparbd}
p_i=\frac1D (-1)^{i-1} D(\{\hat \xi_i \})
=\frac{(-1)^{i-1} D(\{\hat \xi_i \})}{D(\xi_2-\xi_1,\cdots, \xi_d-\xi_1)},
\quad i=1,\cdots, d+1,
\end{equation}
define a probability law on the set $\xi_1,\cdots,\xi_{d+1}$ with a full support.

By linear algebra, the unique solution $\ga_0$
to system \eqref{eq4mainminmaxoparbd} is given by the formulas
\begin{equation}
\label{eq71mainminmaxoparbd}
\ga_0^1= \frac1D \det \left(
 \begin{aligned}
& f(\xi_2)-f(\xi_1) \quad \quad \xi^2_2-\xi_1^2 \quad \quad \cdots \quad \quad \xi^d_2-\xi^d_1  \\
& \quad \quad \cdots \quad   \\
& f(\xi_{d+1})-f(\xi_1) \quad  \xi^2_{d+1} -\xi_1^2 \quad \cdots \quad \xi_{d+1}^d-\xi^d_1
\end{aligned}
\right),
\end{equation}
\begin{equation}
\label{eq72mainminmaxoparbd}
 \ga_0^2= \frac1D \det \left(
 \begin{aligned}
& \xi^1_2-\xi_1^1 \quad \quad f(\xi_2)-f(\xi_1)  \quad \quad \cdots \quad \quad \xi^d_2-\xi^d_1  \\
& \quad \quad \cdots \quad   \\
& \xi^1_{d+1} -\xi_1^1 \quad \quad f(\xi_{d+1})-f(\xi_1) \quad \cdots \quad \xi_{d+1}^d-\xi^d_1
\end{aligned}
\right),
\end{equation}
and similar for other $\ga_0^i$.
One sees by inspection that for any $i$
\begin{equation}
 \label{eq7mainminmaxoparbd}
f(\xi_i)-(\ga_0,\xi_i)
 =\frac1D \sum_{i=1}^{d+1} [f(\xi_i)
(-1)^{i+1} D(\{\hat \xi_i \})],
\end{equation}
and
 \[
 \ga_0=\frac1D (f(\xi_2)-f(\xi_1)) R(\xi_3-\xi_1,\cdots, \xi_{d+1}-\xi_1)
 -\frac1D (f(\xi_3)-f(\xi_1)) R(\xi_2-\xi_1,\xi_4-\xi_1, \cdots, \xi_{d+1}-\xi_1)
 \]
 \[
 +\cdots + \frac1D (-1)^{d+1} (f(\xi_{d+1})-f(\xi_1) R(\xi_2-\xi_1,\cdots, \xi_d-\xi_1),
\]
which rewrites as
\begin{equation}
 \label{mainminmaxopgaarbd}
 \ga_0=-\frac1D \left[
 f(\xi_1) \tilde R(\{ \hat \xi_1 \})
 -f(\xi_2) \tilde R(\{ \hat \xi_2 \})
 +\cdots + (-1)^d f(\xi_{d+1})\tilde R(\{ \hat \xi_{d+1} \}),
\right]
\end{equation}

For example, in case $d=3$, we have
\[
 \Pi[\xi_1,\cdots,\xi_4](f)=\frac{f(\xi_1) D_{234}+f(\xi_2) D_{143}+f(\xi_3) D_{124}+f(\xi_4) D_{132}}
 {D_{234}+D_{143}+D_{124}+ D_{132}},
\]
\[
 \ga_0=-\frac{f(\xi_1) R_{234}+f(\xi_2) R_{143}+f(\xi_3) R_{124}+f(\xi_4) R_{132}}
 {D_{234}+D_{143}+D_{124}+ D_{132}},
\]
where $D_{ijm}= D(\xi_i, \xi_j, \xi_m)$ and
\[
R_{ijm}=R(\xi_i,\xi_j)+R(\xi_j,\xi_m)+R(\xi_m,\xi_i).
\]

As in case $d=2$, we arrive at the following.

\begin{prop}
\label{propriskneutrald}
Let a family $\{\xi_1, \cdots ,\xi_{d+1}\}$ in $\R^d$  satisfy condition \eqref{eq2mainminmaxoparbd},
 and let
$f(\xi_1)$, $\cdots ,f(\xi_{d+1})$ be arbitrary numbers. Then
\begin{equation}
 \label{eq73mainminmaxoparbd}
 \Pi[\xi_1,\cdots,\xi_{d+1}](f)
 =\frac1D \sum_{i=1}^{d+1} [f(\xi_i)
(-1)^{i+1} D(\{\hat \xi_i \})],
\end{equation}
and the minimum in \eqref{eq1mainminmaxoparbd} is attained on the single $\ga_0$ given by
\eqref{mainminmaxopgaarbd}.
\end{prop}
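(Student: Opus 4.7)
The plan is to generalise the proof of Proposition \ref{propthreepointriskneutral} verbatim, with the only non-trivial new ingredient being a half-space lemma in dimension $d$. Existence of a minimiser comes first: condition \eqref{eq2mainminmaxoparbd} implies that $\{\xi_1,\cdots,\xi_{d+1}\}$ is not contained in any closed half-space through the origin, so one can find a constant $c>0$ such that for every unit vector $\om\in\R^d$ some $\xi_i$ satisfies $(\xi_i,\om)\le -c$. Hence $\max_i[f(\xi_i)-(\xi_i,\ga)]\to\infty$ as $|\ga|\to\infty$, and the outer minimum in \eqref{eq1mainminmaxoparbd} is attained on some finite $\ga_0$.

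Next I would show that at $\ga_0$ the inner maximum is attained on all $d+1$ points simultaneously. Let $I:=\{i:f(\xi_i)-(\xi_i,\ga_0)=\max_j(f(\xi_j)-(\xi_j,\ga_0))\}$ be the active set, and suppose it is a proper subset of $\{1,\cdots,d+1\}$. Then $|I|\le d$, and any family of at most $d$ non-zero vectors in $\R^d$ is contained in some open half-space through the origin; by a standard separation argument there exists $\eta\in\R^d$ with $(\eta,\xi_i)>0$ for every $i\in I$. Replacing $\ga_0$ by $\ga_0+\ep\eta$ with $\ep>0$ sufficiently small strictly decreases $f(\xi_i)-(\xi_i,\cdot)$ for all $i\in I$ while preserving the strict inequalities for $j\notin I$, which contradicts the minimality of $\ga_0$. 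Hence $I=\{1,\cdots,d+1\}$ and $\ga_0$ satisfies \eqref{eq3mainminmaxoparbd}, equivalently the linear system \eqref{eq4mainminmaxoparbd}.

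Linear independence of $\xi_2-\xi_1,\cdots,\xi_{d+1}-\xi_1$ is forced by \eqref{eq2mainminmaxoparbd}, since any affine relation among the $\xi_i$ would force the origin onto an affine face of lower dimension. So \eqref{eq4mainminmaxoparbd} has a unique solution, and Cramer's rule in the form \eqref{eq71mainminmaxoparbd}--\eqref{eq72mainminmaxoparbd} together with the determinant identity \eqref{eq5mainminmaxoparbd} gives the compact form \eqref{mainminmaxopgaarbd}. Substituting $\ga_0$ back into $f(\xi_1)-(\xi_1,\ga_0)$ and using \eqref{eq7mainminmaxoparbd} (which already displays the resulting symmetric expression for all $i$) yields \eqref{eq73mainminmaxoparbd}. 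The main obstacle is precisely the geometric lemma in the second paragraph: one must know that the minimum number of vectors that can enclose the origin in the interior of their convex hull in $\R^d$ is exactly $d+1$, a Carath\'eodory-type fact that guarantees no proper subset of the active set can tie at the maximum while still blocking all escape directions for $\ga_0$.
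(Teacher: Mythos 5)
Your overall strategy matches the paper's: the paper proves the $d=2$ case by perturbation arguments and for general $d$ simply asserts that ``the same argument'' forces a minimiser to satisfy the system \eqref{eq4mainminmaxoparbd}, which it then solves by Cramer's rule; your existence-of-minimiser argument plus uniqueness of the solution of \eqref{eq4mainminmaxoparbd} closes the loop exactly as the paper's alternative argument does for $d=2$. Your active-set formulation is the right way to unify, in arbitrary dimension, the paper's separate treatments of one and two active points.

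However, the geometric lemma you isolate as the key step is false as stated. It is not true that any family of at most $d$ non-zero vectors in $\R^d$ lies in an open half-space through the origin: the pair $\{e_1,-e_1\}$ in $\R^d$, $d\ge 2$, is a counterexample, since no $\eta$ satisfies both $(\eta,e_1)>0$ and $(\eta,-e_1)>0$. Nor does the Carath\'eodory-type fact you invoke at the end suffice: knowing that fewer than $d+1$ vectors cannot contain the origin in the \emph{interior} of their convex hull does not rule out the origin lying \emph{in} that convex hull (again $\{e_1,-e_1\}$), and it is membership in the convex hull, not in its interior, that obstructs strict separation. What you actually need, and what is true here, is that $0\notin\mathrm{conv}\{\xi_i:\, i\in I\}$ for every proper subset $I$. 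This does follow from \eqref{eq2mainminmaxoparbd}: since $\Pi[\xi_1,\cdots,\xi_{d+1}]$ has non-empty interior, the $d+1$ points are affinely independent, so the convex hull of any proper subset of them is a proper face of the simplex, hence disjoint from the interior and in particular does not contain the origin; strict separation of the compact set $\mathrm{conv}\{\xi_i:\, i\in I\}$ from $0$ then produces the required $\eta$. With that substitution the rest of your argument (linear independence of the $\xi_i-\xi_1$, Cramer's rule, and the passage to \eqref{eq7mainminmaxoparbd} and \eqref{mainminmaxopgaarbd}) goes through.
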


\begin{corollary}
\label{cor1topropriskneutrald}
Under the assumptions of Proposition \ref{propriskneutrald},
\begin{equation}
 \label{eq8mainminmaxoparbd}
  \Pi[\xi_1,\cdots,\xi_{d+1}](f)=\E f(\xi),
\end{equation}
\begin{equation}
 \label{eq81mainminmaxoparbd}
  \ga_0=\E \left[ f(\xi) \frac{\tilde R (\{\hat \xi \})}{D(\{\hat \xi \})}\right],
\end{equation}
where the expectation is with respect to the probability law
\eqref{eq6mainminmaxoparbd}.
This law is the unique risk neutral
probability law on $\{\xi_1,\cdots,\xi_{d+1}\}$, i.e. the one satisfying
\begin{equation}
 \label{eq9mainminmaxoparbd}
 \E(\xi)=\sum_{i=1}^{d+1} p_i \xi_i=0.
 \end{equation}
  \end{corollary}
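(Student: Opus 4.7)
The first two equalities, \eqref{eq8mainminmaxoparbd} and \eqref{eq81mainminmaxoparbd}, should be read as pure rewritings of the formulas already established in Proposition \ref{propriskneutrald}. The plan is to substitute the definition \eqref{eq6mainminmaxoparbd} of $p_i$ into the right-hand sides. For \eqref{eq8mainminmaxoparbd}, the sum $\sum_i p_i f(\xi_i)$ reproduces \eqref{eq73mainminmaxoparbd} term by term. For \eqref{eq81mainminmaxoparbd}, the factors $D(\{\hat \xi_i\})$ in $p_i$ cancel against the denominators in $\tilde R(\{\hat \xi_i\})/D(\{\hat \xi_i\})$, leaving exactly the expression \eqref{mainminmaxopgaarbd} for $\ga_0$ (modulo tracking the sign $(-1)^{i-1}$ consistently). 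So these two identities are essentially bookkeeping.

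The substantive content is the risk-neutral property \eqref{eq9mainminmaxoparbd}. The cleanest route is to apply \eqref{eq8mainminmaxoparbd} to a linear test function. For any fixed $a \in \R^d$, set $f(\xi)=(a,\xi)$. Then
\[
\Pi[\xi_1,\cdots,\xi_{d+1}](f) = \min_{\ga \in \R^d}\max_i (a-\ga,\xi_i).
\]
Choosing $\ga=a$ gives the value $0$. On the other hand, for any $\ga \neq a$ condition \eqref{eq2mainminmaxoparbd} (equivalently, the fact that $\{\xi_1,\ldots,\xi_{d+1}\}$ is not contained in any open half-space through the origin) forces $\max_i(a-\ga,\xi_i) \ge 0$. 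Hence the minmax equals $0$, and \eqref{eq8mainminmaxoparbd} yields $\sum_i p_i (a,\xi_i) = 0$ for every $a$, i.e. $\sum_i p_i \xi_i = 0$.

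For uniqueness, the system $\sum_{i=1}^{d+1} p_i\xi_i = 0$, $\sum_{i=1}^{d+1} p_i = 1$ consists of $d+1$ linear equations in $d+1$ unknowns, with coefficient matrix whose columns are the augmented vectors $(\xi_i,1)^T \in \R^{d+1}$. Its determinant equals, up to sign, $\sum_{i=1}^{d+1}(-1)^{i-1}D(\{\hat \xi_i\}) = D$, which is strictly positive by the orientation/interior assumption \eqref{eq2mainminmaxoparbd}. Hence the solution is unique. Alternatively, one can argue directly: since $\{\xi_2,\ldots,\xi_{d+1}\}$ is a basis of $\R^d$, any risk-neutral law must have $p_1 \neq 0$ (otherwise linear independence would force all $p_i = 0$, contradicting normalization), and then $\xi_1 = -\sum_{i\ge 2}(p_i/p_1)\xi_i$ determines all ratios, hence all $p_i$.

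No step here presents a real obstacle; the only care needed is in Step 1, where one must track the signs $(-1)^{i-1}$ and the orientation conventions consistently so that the numerators in \eqref{eq73mainminmaxoparbd} and \eqref{mainminmaxopgaarbd} match the formal expectations against the $p_i$.
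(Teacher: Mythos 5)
Your proposal is correct, and the identities \eqref{eq8mainminmaxoparbd}--\eqref{eq81mainminmaxoparbd} are indeed treated by both you and the paper as mere rewritings of Proposition \ref{propriskneutrald}; but for the substantive step --- the risk-neutrality \eqref{eq9mainminmaxoparbd} --- you take a genuinely different route. The paper substitutes the cofactor expressions \eqref{eq6mainminmaxoparbd} into $\sum_i p_i\xi_i$ and recognizes each coordinate of the result as a $(d+1)\times(d+1)$ determinant with a repeated row, hence zero: a one-line algebraic identity, at the cost of sign bookkeeping. You instead apply the already-established formula \eqref{eq73mainminmaxoparbd} (valid for arbitrary values $f(\xi_i)$, so in particular for $f(\xi_i)=(a,\xi_i)$) and evaluate the minmax directly: it is $0$ at $\ga=a$, and bounded below by $0$ because $0\in{\Int}\,\Pi[\xi_1,\cdots,\xi_{d+1}]$ forbids any $v$ with $(v,\xi_i)<0$ for all $i$; hence $\sum_i p_i(a,\xi_i)=0$ for every $a$. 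This is sign-free and has the added virtue of explaining \emph{why} risk-neutrality must emerge: linear payoffs are exactly replicable, so their game value is zero. You also prove the uniqueness claim (via the nonvanishing determinant of the augmented system, or the basis-expansion argument), which the paper's proof of this corollary omits altogether and only sketches in the two-dimensional case. One minor caveat, which you correctly flag as a sign-tracking issue: as printed, \eqref{mainminmaxopgaarbd} and \eqref{eq81mainminmaxoparbd} actually differ by an overall sign (the coefficient of $f(\xi_i)$ in the expectation is $(-1)^{i-1}\tilde R(\{\hat\xi_i\})/D$, whereas \eqref{mainminmaxopgaarbd} carries an extra global minus), so one of the paper's formulas harbors a typo; this does not affect the validity of your argument, only the bookkeeping you would need to make explicit.
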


\begin{proof}
The only thing left to prove is that the law \eqref{eq6mainminmaxoparbd} satisfies
\eqref{eq9mainminmaxoparbd}. But as the r.h.s. of
\eqref{eq9mainminmaxoparbd} can be written as the vector-valued determinant
\[
\E f(\xi)
=\det \left(
 \begin{aligned}
& \xi_1 \quad \xi_2 \quad \cdots \quad \xi_{d+1}  \\
& \xi_1^1 \quad \xi_2^1 \quad \cdots \quad \xi_{d+1}^1  \\
& \quad \quad \cdots \quad \quad \\
& \xi_1^d \quad \xi_2^d \quad \cdots \quad \xi_{d+1}^d
\end{aligned}
\right),
\]
that is a vector with co-ordinates
\[
\det \left(
 \begin{aligned}
& \xi_1^j \quad \xi_2^j \quad \cdots \quad \xi_{d+1}^j  \\
& \xi_1^1 \quad \xi_2^1 \quad \cdots \quad \xi_{d+1}^1  \\
& \quad \quad \cdots \quad \quad \\
& \xi_1^d \quad \xi_2^d \quad \cdots \quad \xi_{d+1}^d
\end{aligned}
\right), \quad j=1, \cdots, d.
\]
it clearly vanishes.
\end{proof}

To better visualize the above formulas, it is handy to delve a bit into
their geometric meaning.
Each term $(-1)^{i-1} D(\{\hat \xi_i \})$ in \eqref{eq5mainminmaxoparbd} equals $d!$ times the volume
of the pyramid (polyhedron) with vertices $\{0 \cup \{\hat \xi_i \}\}$. The determinant $D$,
being the volume of the parallelepiped built on $\xi_2-\xi_1,\cdots, \xi_{d+1}-\xi_1$, equals
$d!$ times the volume of the pyramid $\Pi [\xi_1, \cdots, \xi_{d+1}]$
in the affine space $\R^d$ with vertices being the end points
of the vectors $\xi_i$, $i=1,\cdots, d+1$. Consequently,
formula \eqref{eq5mainminmaxoparbd} expresses the decomposition of the volume of the pyramid
$\Pi [\xi_1, \cdots, \xi_{d+1}]$ into $d+1$ parts, the volumes of the pyramids $\Pi[\{0 \cup \{\hat \xi_i \}\}]$ obtained by sectioning from the origin, and the weights of the distribution \eqref{eq6mainminmaxoparbd}
are the ratios of these parts to the whole volume. Furthermore,
the magnitude of the rotor $R(u_1,\cdots, u_{d-1})$ is known to equal the volume of the parallelepiped
built on $u_1,\cdots, u_{d-1}$. Hence $\|\tilde R(\{\xi_i\})\|$ equals $(d-1)!$ times the volume
(in the affine space $\R^d$) of the $(d-1)$-dimensional face
of the pyramid $\Pi [\xi_1, \cdots, \xi_{d+1}]$ with vertices $\{\hat \xi_i \}$.
Hence the magnitude of the ratios $\tilde R (\{\hat \xi_i \})/ D(\{\hat \xi_i \})$,
 playing the roles of weights in \eqref{eq81mainminmaxoparbd}, are
 the ratios of the $(d-1)!$ times $(d-1)$-dimensional volumes of the bases of the pyramids
  $\Pi[\{0 \cup \{\hat \xi_i \}\}]$ to the $d!$ times their full $d$-dimensional volumes.
  Consequently,
  \begin{equation}
 \label{eqweightratioforga}
  \frac{\|\tilde R (\{\hat \xi_i \})\|}{D(\{\hat \xi_i \})}
  = \frac{1}{h_i},
\end{equation}
where $h_i$ is the length of the perpendicular from the origin to the affine hyperspace generated by the
end points of the vectors  $\{\hat \xi_i \}$. These geometric considerations lead
directly to the following estimates for
expressions \eqref{eq8mainminmaxoparbd} and \eqref{eq81mainminmaxoparbd}.

\begin{corollary}
\label{cor2topropriskneutrald}
\begin{equation}
 \label{eq1estimforPiandga}
  |\Pi[\xi_1,\cdots,\xi_{d+1}](f)| \le \| f \|,
\end{equation}
\begin{equation}
 \label{eq2estimforPiandga}
  |\ga_0| \le \| f \| \max_{i=1,\cdots, d+1} h_i^{-1},
\end{equation}
with $h_i$ from \eqref{eqweightratioforga}.
\end{corollary}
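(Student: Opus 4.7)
The plan is to deduce both bounds directly from the probabilistic representations of $\Pi[\xi_1,\cdots,\xi_{d+1}](f)$ and $\ga_0$ supplied by Corollary \ref{cor1topropriskneutrald}, using only the triangle inequality and the geometric identity \eqref{eqweightratioforga}. No new ideas are needed.

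For \eqref{eq1estimforPiandga}, formula \eqref{eq8mainminmaxoparbd} exhibits $\Pi[\xi_1,\cdots,\xi_{d+1}](f)$ as the expectation $\sum_{i=1}^{d+1} p_i f(\xi_i)$ with respect to the genuine probability law \eqref{eq6mainminmaxoparbd}. Since $p_i\ge 0$ and $\sum_i p_i=1$, the standard bound on averages gives
\[
|\Pi[\xi_1,\cdots,\xi_{d+1}](f)| \le \sum_{i=1}^{d+1} p_i |f(\xi_i)| \le \|f\|,
\]
which is the first estimate.

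For \eqref{eq2estimforPiandga}, I would start from the vector-valued expectation formula \eqref{eq81mainminmaxoparbd}, expanded as
\[
\ga_0=\sum_{i=1}^{d+1} p_i f(\xi_i)\, \frac{\tilde R(\{\hat\xi_i\})}{D(\{\hat\xi_i\})}
\]
(up to an inessential overall sign, since we only care about magnitudes). Passing to Euclidean norms and pulling the triangle inequality through the finite sum yields
\[
|\ga_0| \le \sum_{i=1}^{d+1} p_i |f(\xi_i)|\, \frac{\|\tilde R(\{\hat\xi_i\})\|}{|D(\{\hat\xi_i\})|}.
\]
Inserting the geometric identity \eqref{eqweightratioforga}, i.e.\ $\|\tilde R(\{\hat\xi_i\})\|/|D(\{\hat\xi_i\})|=1/h_i$, then bounding $|f(\xi_i)|\le\|f\|$ and $h_i^{-1}\le \max_j h_j^{-1}$, and finally using $\sum_i p_i=1$, gives the claimed estimate
\[
|\ga_0| \le \|f\|\, \max_{j=1,\cdots,d+1} h_j^{-1}.
\]

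I do not expect any genuine obstacle. The entire nontrivial content has already been discharged in the paragraph preceding the corollary, where $\tilde R$ and $D$ are interpreted as $(d-1)$- and $d$-dimensional volumes, from which \eqref{eqweightratioforga} follows by the elementary "base $\times$ height" formula; once that identity is accepted, both bounds reduce to one-line applications of the triangle inequality to a probabilistic average.
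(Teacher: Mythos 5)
Your proof is correct and follows exactly the route the paper intends: the paper states that the geometric considerations preceding the corollary (the probabilistic weights \eqref{eq6mainminmaxoparbd} and the identity \eqref{eqweightratioforga}) ``lead directly'' to these estimates, and your write-up simply makes the triangle-inequality steps explicit. Your remark about the inessential overall sign in \eqref{eq81mainminmaxoparbd} is also well taken, since only magnitudes matter for the bound.
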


These estimates are of importance for numerical calculations of $\ga_0$
(yielding some kind of stability estimates with respect to the natural parameters).
 On the other hand, we shall need them
for nonlinear extensions of Proposition \ref{propriskneutrald} discussed later.

Let us say that a finite family of non-vanishing vectors $\xi_1,\cdots, \xi_k$
in $\R^d$ are in general position, if the following conditions hold (extending naturally
 the corresponding conditions used in case $d=2$):

(i) No $d$ vectors out of this family are linearly dependent,

(ii) The collection $\{\xi_1, \cdots, \xi_k\}$ does not belong to any half-space, i.e. there is no $\om \in \R^d$
such that $(\om,\xi_i)>0$ for all $i$.

\begin{remark}
In Roorda, Schumacher and Engwerda \cite{RoSchEn}, condition (ii) is called {\it positive completeness} of the family $\{\xi_1, \cdots, \xi_k\}$.
\end{remark}

It is worth noting that in case $k=d+1$, assuming (i) and (ii) is equivalent to \eqref{eq2mainminmaxoparbd}.

We are interested in evaluating the expression
\begin{equation}
\label{eq10mainminmaxoparbd}
\Pi[\xi_1, \cdots , \xi_k](f)=\min_{\ga \in \R^d} \max_i [f(\xi_i)-(\xi_i,\ga)].
\end{equation}

 \begin{theorem}
\label{thriskneutrald}
Let a family of non-vanishing vectors $\xi_1,\cdots, \xi_k$ in $\R^d$ satisfy (i) and (ii).
Then
\begin{equation}
\label{eq11mainminmaxoparbd}
\Pi[\xi_1, \cdots , \xi_k](f)=\max_{\{I\}} \E_{I}f(\xi)
\end{equation}
where $\max$ is taken over all families $\{\xi_i\}_{i\in I}$, $I\subset \{1,\cdots, k\}$
of size $|I|=d+1$ that satisfy (ii) (i.e. such that the origin is contained in the interior of
$\Pi [\xi_i, \, i\in I]$),
and $\E_{I}$ denotes the expectation with respect to the unique risk neutral probability
on $\{\xi_i\}_{i\in I}$ (given by Proposition \ref{propriskneutrald}).
\end{theorem}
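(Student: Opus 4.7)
The plan is to follow the same two-step pattern used for Theorem \ref{thpolygonriskneutral}: an elementary lower bound by restriction, and a matching upper bound obtained by perturbation at the minimizer together with a Carath\'eodory-type reduction to a $(d+1)$-point subfamily. For any admissible subset $I\subset\{1,\dots,k\}$, i.e.\ one of size $d+1$ with $0\in\Int \Pi[\{\xi_i\}_{i\in I}]$, restricting the maximum in \eqref{eq10mainminmaxoparbd} to $i\in I$ only decreases it, so
\[
\Pi[\xi_1,\dots,\xi_k](f)\ \ge\ \min_{\ga\in\R^d}\max_{i\in I}[f(\xi_i)-(\xi_i,\ga)]\ =\ \E_I f(\xi)
\]
by Proposition \ref{propriskneutrald}, and taking the maximum over admissible $I$ yields the $\ge$ direction of \eqref{eq11mainminmaxoparbd}.

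The reverse inequality rests on a Carath\'eodory-type geometric observation: conditions (i) and (ii) for the full family guarantee the existence of at least one admissible $I$. Indeed, (ii) forces $0$ into the closed convex hull of $\{\xi_1,\dots,\xi_k\}$ (otherwise a separating hyperplane would directly contradict (ii)), so by Carath\'eodory $0=\sum_{i\in I}p_i\xi_i$ with $p_i>0$, $\sum p_i=1$ and $|I|\le d+1$; condition (i) forbids $|I|\le d$ (any $\le d$ vectors of the family being linearly independent), so $|I|=d+1$, and the subfamily indexed by $I$ automatically satisfies (ii), because any separating $\om$ would yield $0=\sum p_i(\om,\xi_i)>0$. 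Since $0\in\Int\Pi[\{\xi_i\}_{i\in I}]$, for every $\om\ne 0$ some $(\om,\xi_i)<0$; compactness of the unit sphere then gives $\max_i[f(\xi_i)-(\xi_i,\ga)]\to\infty$ as $|\ga|\to\infty$, so the infimum in \eqref{eq10mainminmaxoparbd} is attained at some finite $\ga_0$.

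The proof is completed by a perturbation argument at $\ga_0$. Let $M=\Pi[\xi_1,\dots,\xi_k](f)$ and $J=\{i:f(\xi_i)-(\xi_i,\ga_0)=M\}$; the subfamily $\{\xi_i\}_{i\in J}$ must again satisfy (ii), for otherwise an $\om$ with $(\om,\xi_i)>0$ on all of $J$ would let $\ga_0+\ep\om$ strictly lower $f(\xi_i)-(\xi_i,\ga)$ on $J$, while continuity keeps the remaining indices strictly below $M$ for small $\ep$, contradicting the minimality of $\ga_0$. Since (i) is inherited, the Carath\'eodory observation applied inside $J$ produces an admissible $I\subset J$ of size $d+1$, and on $I$ the equalities $f(\xi_i)-(\xi_i,\ga_0)=M$ coincide with the linear system \eqref{eq4mainminmaxoparbd}; by Proposition \ref{propriskneutrald} this system has a unique solution realizing the value $\E_I f(\xi)$, so $M=\E_I f(\xi)\le\max_I \E_I f(\xi)$, giving \eqref{eq11mainminmaxoparbd}. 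The one delicate point I anticipate is the non-generic case $|J|>d+1$, in which the subset $I\subset J$ is not canonical; but any admissible choice works, because the uniqueness of the solution to \eqref{eq4mainminmaxoparbd} forces the same $\ga_0$ and the same common value $M$ for every such $I$.
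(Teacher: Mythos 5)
Your proof is correct and follows essentially the same route as the paper: the lower bound by restricting the maximum to an admissible $(d+1)$-subfamily, the perturbation argument at the minimizer $\ga_0$ showing the active set must satisfy (ii), and the Carath\'eodory reduction to a subfamily of size exactly $d+1$ on which Proposition \ref{propriskneutrald} applies. The paper's own proof simply refers back to the argument for Theorem \ref{thpolygonriskneutral} plus the Carath\'eodory observation; you have written out that same argument in full, including the check that condition (i) rules out supports of size $\le d$.
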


\begin{proof}
This is the same as the proof of Theorem \ref{thpolygonriskneutral}. The key geometrical observation, that any
subset of the family
$\{\xi_1, \cdots, \xi_k\}$ satisfying (i) and (ii) contains necessarily a subset with precisely $d+1$ elements
still satisfying (ii), is a direct consequence of the Carath\'eodory theorem.
\end{proof}

\begin{remark}
\label{remlinearpartofBelop}
As is easy to see, the $\max$ in \eqref{eq11mainminmaxoparbd}
is attained on a family $\{\xi_i\}_{i\in I}$ if and only if
\begin{equation}
\label{eq12mainminmaxoparbd}
f(\xi_i)-(\ga_I,\xi_i)\ge f(\xi_r)-(\ga_I, \xi_r)
\end{equation}
for any $i\in I$ and any $r$, where $\ga_I$ is the corresponding optimal value.
 Consequently, on the convex set of functions $f$ satisfying
inequalities \eqref{eq12mainminmaxoparbd} for all $r$, the mapping
$\Pi[\xi_1, \cdots , \xi_k](f)$ is linear:
\[
\Pi[\xi_1, \cdots , \xi_k](f)=\E_If(\xi).
\]
\end{remark}

\section{Extreme points of risk-neutral laws}

We shall expand a bit on the geometrical interpretation of the above results.

Let us call a probability law $\{p_1, \cdots, p_k\}$ on a finite set $\{\xi_1, \cdots, \xi_k\}$
of vectors in $\R^d$ risk-neutral
(with respect to the origin) if the origin is its barycenter, that is

 \begin{equation}
 \label{eqriskneutdefd}
 \E(\xi)=\sum_{i=1}^k p_i \xi_i=0.
 \end{equation}

The geometrical interpretation we have in mind follows from the following simple observation.

\begin{prop}
\label{propextremriskneutr}
For a family $\{\xi_1, \cdots, \xi_k\}$ satisfying (i) and (ii),
the extreme points of the convex set of risk-neutral probabilities
 are risk-neutral probabilities with supports on subsets of size precisely $d+1$,
 satisfying themselves conditions (i) and (ii).
\end{prop}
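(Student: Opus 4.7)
The plan is to view the convex set $\mathcal{R}$ of risk-neutral probability laws on $\{\xi_1,\dots,\xi_k\}$ as the polytope in $\R^k$ cut out by $p_i\ge 0$, $\sum_i p_i=1$ and $\sum_i p_i\xi_i=0$. Since the equality constraints amount to a $(d+1)\times k$ linear system, the standard basic-feasible-solution characterization of vertices (applicable because the polytope is bounded, lying in the unit simplex) asserts that the extreme points of $\mathcal{R}$ are precisely those $p\in\mathcal{R}$ whose support $I:=\{i:p_i>0\}$ makes the columns $(1,\xi_i^\top)^\top$, $i\in I$, linearly independent in $\R^{d+1}$. In particular $|I|\le d+1$, and it remains to pin down $|I|$ exactly, to verify (i)--(ii) for the subfamily, and to prove the converse.

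To force $|I|=d+1$ I would use condition (i): if $|I|\le d$, then by (i) the vectors $\{\xi_i\}_{i\in I}$ are linearly independent in $\R^d$, so the equation $\sum_{i\in I}p_i\xi_i=0$ with all $p_i>0$ would force every $p_i=0$, contradicting $\sum p_i=1$. Condition (i) for the subfamily is inherited from the original family, and as the remark just before the statement records, for a family of exactly $d+1$ vectors (i) together with (ii) is equivalent to condition \eqref{eq2mainminmaxoparbd}. Since $(p_i)_{i\in I}$ realizes the origin as a \emph{strict} convex combination of $\{\xi_i\}_{i\in I}$, property \eqref{eq2mainminmaxoparbd}, and hence (ii) for the subfamily, will follow once we know the convex hull is full-dimensional.

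The geometric subtlety I expect to be the main obstacle is precisely this full-dimensionality, i.e.\ the affine independence of $\{\xi_i\}_{i\in I}$. The argument I have in mind uses (i) once more: any $d$ of these $d+1$ vectors are linearly independent, so there is a \emph{unique} (up to scaling) linear relation among them, and the risk-neutrality condition identifies it as the relation with coefficients $(p_i)$. If a non-trivial affine dependence $\sum_{i\in I}c_i\xi_i=0$, $\sum_{i\in I}c_i=0$, existed, then $(c_i)=\lambda(p_i)$ for some $\lambda$, and the second equation would give $0=\lambda\sum p_i=\lambda$, forcing $(c_i)=0$; contradiction.

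For the converse, let $p$ be any risk-neutral probability supported on an $(d+1)$-subset $I$ obeying (i)--(ii). If $p=\tfrac12(q+r)$ with $q,r\in\mathcal{R}$, then $q_i=r_i=0$ for $i\notin I$ by non-negativity, so $q$ and $r$ are themselves risk-neutral probabilities on $\{\xi_i\}_{i\in I}$. The uniqueness part of Corollary~\ref{cor1topropriskneutrald} then yields $q=r=p$, so $p$ is extreme, completing the proposed proof.
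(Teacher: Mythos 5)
Your proof is correct, and its engine is the same as the paper's: non-extremality of laws with large support comes from perturbing along a kernel vector $b$ with $\sum_i b^i=0$ and $\sum_i b^i\xi_i=0$ (the paper constructs this $b$ explicitly and writes $p=\frac12(p-\ep b)+\frac12(p+\ep b)$; your appeal to the basic-feasible-solution characterization of polytope vertices is precisely that argument packaged as a standard lemma), while extremality of the $(d+1)$-supported laws in general position comes from the uniqueness of the risk-neutral law on $d+1$ points, exactly as in your last paragraph. Where you genuinely go beyond the paper is in closing two gaps its proof leaves implicit: the paper never rules out extreme points supported on $\le d$ points (your observation that condition (i) forces $|I|\ge d+1$ for \emph{any} risk-neutral law), and it never verifies that the $(d+1)$-point support of an extreme point actually satisfies condition (ii) — your affine-independence argument, using that (i) makes the linear relation among the $d+1$ support vectors unique up to scale and that the strict positivity of the weights then places the origin in the interior of the resulting simplex, is exactly the missing step. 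So your write-up is not a different proof so much as a completed one; the only cost is the reliance on the vertex characterization as a black box, which the paper avoids by doing the two-line perturbation by hand.
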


\begin{proof}
It is clear that risk-neutral probabilities with supports on subsets of size precisely $d+1$,
 satisfying themselves conditions (i) and (ii) are extreme points.
 In fact, if this were not the case for such a probability law, then it could be presented as
 a convex combination of other risk-neutral laws. But these risk-neutral laws would
 necessarily have the same support as the initial law, which would contradict the uniqueness
 of the risk-neutral law supported on $d+1$ points in general position.

  Assume $p=(p^1,\dots, p^m)$
 is a risk-neutral probability law on $m>d+1$ points $\xi_1,\cdots, \xi_m$.
 Linear dependence of the vectors $\xi_2-\xi_1,\cdots, \xi_m-\xi_1$ implies the existence
 of a non-vanishing vector $b=(b^1,\dots, b^m)$ in $\R^m$ such that
 \[
 \sum_{i=1}^m b^i=0, \quad \sum_{i=1}^m b^i \xi_i=0.
 \]
 Hence for small enough $\ep$, the vectors
 $p-\ep b$ and $p+\ep b$ are risk neutral probability laws on $\xi_1,\cdots, \xi_m$.
 But
 \[
 p=\frac12(p-\ep b)+\frac12(p+\ep b),
 \]
 showing that $p$ is not an extreme point.
 \end{proof}

 Proposition \ref{propextremriskneutr}
 allows one to reformulate Theorem
 \ref{thriskneutrald}
 in the following way.

  \begin{theorem}
\label{thpolyhedralriskneutrald}
Let a family of non-vanishing vectors $\xi_1,\cdots, \xi_k$ in $\R^d$ satisfy (i) and (ii).
Then the r.h.s. of formula \eqref{eq11mainminmaxoparbd}, i.e.
\begin{equation}
\label{eq1thpolyhedralriskneutrald}
\Pi[\xi_1, \cdots , \xi_k](f)
=\min_{\ga \in \R^d} \max_i [f(\xi_i)-(\xi_i,\ga)]=\max \E_I f(\xi)
\end{equation}
can be interpreted as the maximum of the averages of $f$ with respect to all extreme points
of the risk-neutral probabilities on $\xi_1,\cdots, \xi_k$.
All these extreme probabilities are expressed in a closed form,
given by \eqref{eq6mainminmaxoparbd}.
\end{theorem}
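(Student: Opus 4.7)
The plan is essentially to combine Theorem \ref{thriskneutrald}, which expresses $\Pi[\xi_1,\cdots,\xi_k](f)$ as a maximum of expectations $\E_I f(\xi)$ over $(d{+}1)$-element index subsets $I$ satisfying condition (ii), with Proposition \ref{propextremriskneutr}, which identifies the extreme points of the convex set of risk-neutral probabilities. The entire theorem is a translation statement, so no new analytic work is required; the content is to match the index sets on the two sides.

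First, I would invoke Theorem \ref{thriskneutrald} verbatim to obtain
\[
\Pi[\xi_1,\cdots,\xi_k](f)=\max_{I}\E_I f(\xi),
\]
where $I$ ranges over subsets of $\{1,\dots,k\}$ of size exactly $d+1$ such that the subfamily $\{\xi_i\}_{i\in I}$ satisfies condition (ii). Since the ambient family satisfies (i), every subfamily automatically satisfies (i) as well (linear independence of $d$ vectors in a subfamily follows from linear independence of the same $d$ vectors in the whole family). Thus, the indexing set in Theorem \ref{thriskneutrald} is precisely
\[
\{\,I\subset\{1,\dots,k\}:|I|=d+1,\ \{\xi_i\}_{i\in I}\ \text{satisfies (i) and (ii)}\,\}.
\]

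Next I would apply Proposition \ref{propextremriskneutr}, which asserts that the extreme points of the convex set of risk-neutral probabilities on $\{\xi_1,\dots,\xi_k\}$ are exactly those probabilities supported on some subset $\{\xi_i\}_{i\in I}$ of cardinality $d+1$ itself satisfying (i) and (ii). Moreover, by Corollary \ref{cor1topropriskneutrald}, such a subset carries a \emph{unique} risk-neutral probability, namely $\E_I$, whose weights are given explicitly by \eqref{eq6mainminmaxoparbd}. Hence the collection of extreme risk-neutral laws is in bijection with the index sets $I$ appearing in Theorem \ref{thriskneutrald}, and each contributes precisely the expectation $\E_I f(\xi)$ to the max.

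Combining the two identifications yields
\[
\Pi[\xi_1,\cdots,\xi_k](f)=\max\{\E_I f(\xi):\E_I\ \text{is an extreme point of the risk-neutral laws on }\xi_1,\dots,\xi_k\},
\]
which is the claimed reformulation; the closed-form expression for each extreme law is just \eqref{eq6mainminmaxoparbd} applied to the corresponding subfamily. There is no real obstacle here beyond the bookkeeping of matching the two descriptions of the admissible index subsets; the only point that deserves an explicit line of reasoning is the automatic inheritance of condition (i) from the full family to its subfamilies, which is why condition (i) appears in Proposition \ref{propextremriskneutr} but is invisible in the phrasing of Theorem \ref{thriskneutrald}.
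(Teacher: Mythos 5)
Your proposal is correct and follows essentially the same route as the paper, which presents this theorem precisely as a reformulation of Theorem \ref{thriskneutrald} via Proposition \ref{propextremriskneutr}, with the extreme laws given in closed form by \eqref{eq6mainminmaxoparbd}. Your explicit remark that condition (i) is inherited by all subfamilies (so that the index sets of Theorem \ref{thriskneutrald} coincide with the supports of the extreme risk-neutral laws) is the one bookkeeping point the paper leaves implicit, and you handle it correctly.
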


It is natural to ask, what happens if conditions (i) or (ii) do not hold.
If (ii) does not hold, then $\Pi[\xi_1, \cdots , \xi_k](f)$ is not defined (equals to $-\infty$).
If only (i) does not hold, one just has to take into account possible additional extreme risk neutral probabilities coming from projections to subspaces. This leads to the following result obtained as a straightforward
extension of Theorem \ref{thpolyhedralriskneutrald}.

 \begin{theorem}
\label{thpolyhedralriskneutrald2}
Let a family of non-vanishing vectors $\xi_1,\cdots, \xi_k$ in $\R^d$ satisfy condition (ii).
Then equation \eqref{eq1thpolyhedralriskneutrald} still holds, where
the maximum is taken over the averages of $f$ with respect to all extreme points
of the risk-neutral probabilities on $\xi_1,\cdots, \xi_k$. However, unlike the situation
with condition (i) satisfied, these extreme risk neutral measures may have support not only
on families of size $d+1$ in general positions, but also on families of any size $m+1$,
$m<d$, such that they belong to a subspace of dimension $m$ and form a set of
 general position in  this subspace.
\end{theorem}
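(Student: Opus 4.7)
The plan is to adapt the proofs of Theorems \ref{thriskneutrald} and \ref{thpolyhedralriskneutrald}, relaxing the role of (i) in pinning the active-set size to exactly $d+1$. First I would reproduce the existence argument: condition (ii) alone still forces $\max_i[f(\xi_i)-(\xi_i,\ga)]\to\infty$ as $|\ga|\to\infty$, so a finite minimizer $\ga_0$ of \eqref{eq10mainminmaxoparbd} exists. Examining the active set $I\subset\{1,\dots,k\}$ at $\ga_0$, I would argue exactly as in Theorem \ref{thpolygonriskneutral} that $\{\xi_i\}_{i\in I}$ itself satisfies (ii): otherwise some $\om\in\R^d$ has $(\om,\xi_i)>0$ for all $i\in I$, and the shift $\ga_0\mapsto\ga_0+\ep\om$ strictly lowers every active value, contradicting minimality. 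Crucially, this argument never forces $\{\xi_i\}_{i\in I}$ to satisfy (i).

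Let $V=\mathrm{span}\{\xi_i\}_{i\in I}$ and $m=\dim V\le d$. Since $(\om,\xi_i)=(\om_V,\xi_i)$ for every $i\in I$, condition (ii) for $\{\xi_i\}_{i\in I}$ in $\R^d$ is equivalent to the same condition within $V$. Applying Carath\'eodory inside $V$, I can extract $I'\subseteq I$ of size $m+1$ whose convex hull contains the origin in its relative interior; then a short affine-independence argument (a point in the relative interior of the convex hull of an $(m+1)$-family in an $m$-dimensional subspace forces that family to be affinely independent, whence any $m$ of its vectors are linearly independent) shows that $\{\xi_i\}_{i\in I'}$ is in general position within $V$, i.e.\ satisfies (i) and (ii) there. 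Proposition \ref{propriskneutrald} applied inside $V$ then yields a unique risk-neutral probability $p_{I'}$ on $\{\xi_i\}_{i\in I'}$ together with the identity $\min_{\ga\in V}\max_{i\in I'}[f(\xi_i)-(\xi_i,\ga)]=\E_{I'} f(\xi)$. Because the original minmax depends only on the $V$-component of $\ga$ for indices in $I$, this forces $f(\xi_i)-(\xi_i,\ga_0)=\E_{I'} f(\xi)$ for every $i\in I$, hence $\Pi[\xi_1,\dots,\xi_k](f)=\E_{I'} f(\xi)$; the reverse inequality $\Pi[\xi_1,\dots,\xi_k](f)\ge\E_J f(\xi)$ for each eligible subfamily $J$ follows as in Theorem \ref{thpolygonriskneutral} by restricting the inner maximum and invoking Proposition \ref{propriskneutrald} on $J$.

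To conclude, I would verify that $p_{I'}$, extended by zero to $\{\xi_1,\dots,\xi_k\}$, is an extreme point of the full risk-neutral polytope. The argument of Proposition \ref{propextremriskneutr} carries over verbatim: any convex decomposition of $p_{I'}$ into risk-neutral laws on $\{\xi_1,\dots,\xi_k\}$ has components supported inside $I'$, and on that general-position family in $V$ the risk-neutral law is unique, forcing the decomposition to be trivial. Conversely, every extreme risk-neutral law arises this way, since a law whose support spans a subspace of dimension $m$ and is not in general position there can be split, by the same dependency trick used in Proposition \ref{propextremriskneutr}, into two distinct risk-neutral laws with the same support. Combining these observations with the identity from the previous paragraph delivers \eqref{eq1thpolyhedralriskneutrald} in the generality claimed. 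The main obstacle I anticipate is the careful bookkeeping between $\R^d$ and $V$ --- in particular, verifying that the Carath\'eodory reduction genuinely lands in a general-position subfamily of $V$ and that the zero-extended probability remains extreme in the full polytope; both points ultimately reduce to the linear-algebraic uniqueness already encoded in Proposition \ref{propriskneutrald}.
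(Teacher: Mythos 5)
Your proposal is essentially the proof the paper has in mind: the paper offers no argument for this theorem beyond calling it a ``straightforward extension'' obtained by ``taking into account additional extreme risk neutral probabilities coming from projections to subspaces,'' and your reduction of the active set at the minimizer to a general-position subfamily inside its own span, followed by Proposition \ref{propriskneutrald} applied within that subspace and the extremality check modeled on Proposition \ref{propextremriskneutr}, is exactly that extension carried out in detail.

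One step needs repair. Under (ii) alone it is \emph{not} true that $\max_i[f(\xi_i)-(\xi_i,\ga)]\to\infty$ as $|\ga|\to\infty$: take $d=2$, $\xi_1=-\xi_2=(1,0)$; condition (ii) holds, yet along $\ga=(c,t)$ with $t\to\infty$ the maximum stays constant. The blow-up argument in Theorem \ref{thpolygonriskneutral} relies on a subfamily having $0$ in the interior of its \emph{full-dimensional} hull, which is precisely what may fail once (i) is dropped. Existence of a finite minimizer should instead be deduced from the facts that the objective is a polyhedral convex function of $\ga$ and that it is bounded below --- e.g.\ by $\E_p f$ for any risk-neutral $p$, such a $p$ existing because (ii) is equivalent to $0\in\mathrm{conv}\{\xi_1,\dots,\xi_k\}$ --- since a polyhedral convex function bounded below attains its infimum. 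With that substitution the rest of your argument goes through; the only other (harmless) imprecision is that Carath\'eodory may return a subfamily of size $m'+1$ spanning a subspace of dimension $m'<\dim V$ rather than one of size $m+1$, but that is exactly the case the theorem's statement is designed to accommodate.
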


\begin{remark}
\label{remnonexchangeminmax}
Notice that $\min$ and $\max$ in \eqref{eq1thpolyhedralriskneutrald} are not interchangeable,
as clearly
\[
\max_i \min_{\ga \in \R^d} [f(\xi_i)-(\xi_i,\ga)]=-\infty.
\]
\end{remark}

Let us now formulate a mirror image of
Theorem \ref{thpolyhedralriskneutrald}, where $\min$ and $\max$ are reversed.
Its proof is almost literally the same as the proof of Theorem \ref{thpolyhedralriskneutrald}.

\begin{theorem}
\label{thriskneutralmir}
Under the assumptions of Theorem \ref{thriskneutralcost}
the expression
\begin{equation}
\label{eq2mainminmaxopmir}
\tilde \Pi[\xi_1, \cdots , \xi_k](f)
=\max_{\ga \in \R^d} \min_{\xi_1,\cdots, \xi_k} [f(\xi_i)+(\xi_i,\ga)]
\end{equation}
can be evaluated by the formula
\begin{equation}
\label{eq2thriskneutralmir}
\Pi[\xi_1, \cdots , \xi_k](f)=\min_{I} \E_I f(\xi),
\end{equation}
where $\min$ is taken over all families $\{\xi_i\}_{i\in I}$, $I\subset \{1,\cdots, k\}$
of size $|I|=d+1$ that satisfy (ii),
and $\E_{I}$ denotes the expectation with respect to the unique risk neutral probability
on $\{\xi_i\}_{i\in I}$. The $\min$ in \eqref{eq2thriskneutralmir} can be also interpreted as taken over
 all extreme points
of the risk-neutral probabilities on $\xi_1,\cdots, \xi_k$.
\end{theorem}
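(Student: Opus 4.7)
The plan is to reduce the assertion directly to Theorem~\ref{thpolyhedralriskneutrald} by a sign-flip, rather than repeating the geometric argument. The first step is to record the elementary identity
\[
\tilde\Pi[\xi_1,\cdots,\xi_k](f)=-\,\Pi[\xi_1,\cdots,\xi_k](-f),
\]
valid for any family $\{\xi_i\}$ satisfying (i), (ii) and any $f$. To derive it I rename $\ga\mapsto -\ga$ inside the outer optimization (a harmless relabeling of the domain of the max), which sends $(\xi_i,\ga)$ to $-(\xi_i,\ga)$; then the identities $\max g=-\min(-g)$ and $\min h=-\max(-h)$ flip both the outer and inner optimizations and change the sign in front of $f$. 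The result is precisely $-\Pi[\xi_1,\cdots,\xi_k](-f)$.

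With this identity in hand, I apply Theorem~\ref{thpolyhedralriskneutrald} to the function $-f$, obtaining
\[
\Pi[\xi_1,\cdots,\xi_k](-f)=\max_{I}\E_I(-f)=-\min_I \E_I f,
\]
where $I$ ranges over all size-$(d+1)$ subsets of $\{1,\dots,k\}$ whose corresponding vectors satisfy condition (ii). Plugging this into the sign-flip identity yields exactly \eqref{eq2thriskneutralmir}. The interpretation of $\min_I \E_I f$ as the minimum of the averages of $f$ over all extreme points of the convex set of risk-neutral probabilities on $\{\xi_1,\dots,\xi_k\}$ is then immediate from Proposition~\ref{propextremriskneutr}, exactly as it was for the max in Theorem~\ref{thpolyhedralriskneutrald}.

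The alternative direct route, mentioned in the statement as being ``almost literally the same'' as before, would mirror the proofs of Proposition~\ref{propthreepointriskneutral} and Theorem~\ref{thriskneutrald}: first show the sup in $\ga$ is attained at some finite $\ga_0$ (using positive completeness to make $\min_i[f(\xi_i)+(\xi_i,\ga)]\to -\infty$ along every ray of $\ga$); then at a maximizer argue by a small perturbation $\ga_0\pm\ep v$ that the inner min must be realized simultaneously on at least $d+1$ indices forming a positively complete subfamily; finally the system $f(\xi_i)+(\xi_i,\ga_0)=c$, $i\in I$, determines $\ga_0$ uniquely and forces $c$ to equal the corresponding risk-neutral expectation. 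The main obstacle along this direct route is precisely the coercivity step: one has to verify that the outer sup is not $+\infty$ and is achieved, which requires condition (ii) in its positive-completeness form. The sign-flip derivation above sidesteps this entirely, inheriting coercivity and finiteness from the already-established Theorem~\ref{thpolyhedralriskneutrald}.
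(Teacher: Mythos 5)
Your proposal is correct, and it takes a genuinely different (and shorter) route than the paper. The paper proves Theorem \ref{thriskneutralmir} by declaring the argument to be ``almost literally the same'' as that of Theorem \ref{thpolyhedralriskneutrald}, i.e.\ by re-running the perturbation/coercivity argument with the roles of $\min$ and $\max$ exchanged --- essentially your ``alternative direct route.'' You instead obtain the result as a formal corollary of the already-proved max-version via the duality
\[
\tilde \Pi[\xi_1,\cdots,\xi_k](f)=-\,\Pi[\xi_1,\cdots,\xi_k](-f),
\]
which is indeed valid (in fact it follows with no relabeling of $\ga$ at all, since $\max_\ga \min_i [f(\xi_i)+(\xi_i,\ga)]=-\min_\ga\max_i[-f(\xi_i)-(\xi_i,\ga)]$; your extra substitution $\ga\mapsto-\ga$ is harmless because it is a bijection of $\R^d$, though as written it leaves the inner term in the form $+(\xi_i,\ga)$ and needs one more such relabeling to match \eqref{eq2mainminmaxop} literally). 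The one point your reduction silently relies on --- and which does hold --- is that Theorem \ref{thriskneutrald}/\ref{thpolyhedralriskneutrald} is stated for \emph{arbitrary} values $f(\xi_1),\dots,f(\xi_k)$ with no convexity or sign hypothesis on $f$, so that it may legitimately be applied to $-f$; had the max-theorem required convexity of $f$, the sign flip would have broken down. Given that, $\Pi(-f)=\max_I\E_I(-f)=-\min_I\E_I f$ and the claim follows, with the extreme-point interpretation inherited from Proposition \ref{propextremriskneutr}. What your approach buys is economy and a guarantee of consistency between the two dual formulas; what the paper's approach buys is independence from the linearity of $f\mapsto\E_I f$ and a template that generalizes to the perturbed settings of Theorems \ref{thriskneutralcost} and \ref{thriskneutralnonl}, where no such clean sign symmetry is available.
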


Notice that the r.h.s. of \eqref{eq2thriskneutralmir} is similar to the formula for a
coherent acceptability measure,
see Artzner et al \cite{ArDe} and Roorda, Schumacher and Engwerda \cite{RoSchEn}. However, in the theory of acceptability measures, the collection of measures
with respect to which the minimization is performed, is a subjectively specified. In our model, this collection
is the collection of all extreme points that arises objectively as an evaluation tool for our game-theoretic
problem.

\begin{remark}
Coherent acceptability measures $\phi$ introduced in Artzner et al \cite{ArDe} represent particular cases of nonlinear
averages in the sense of Kolmogorov, see \cite{Kolmog} and Maslov \cite{Mas05}. The distinguished feature that leads to
the representation of $\phi$ as an infimum over probability measures is its super-additivity.
Clearly, postulating sub-additivity, instead of super-additivity, would lead similarly to the representation as a supremum
over probability measures, and hence to the analog of \eqref{eq1thpolyhedralriskneutrald}.
\end{remark}

 \section{A nonlinear extension}
\label{secnonlinext}

Let us discuss a nonlinear extension of the above results. It will be used
 for the analysis of transaction costs.
We start with the simplest additive perturbations, which are sufficient for the static (one-step)
evaluations with transaction costs.

For a finite set of non-vanishing vectors
   $\{\xi_1,\cdots, \xi_k\}$ in $\R^d$, we shall evaluate the expression
\begin{equation}
\label{eq2mainminmaxopcost}
\Pi[\xi_1, \cdots , \xi_k](f,g)=\min_{\ga \in \R^d} \max_{\xi_1,\cdots, \xi_k} [f(\xi_i)-(\xi_i,\ga)+g(\ga)],
\end{equation}
where $g$ is some continuous function. The main example to have in mind is
\[
g(x)=c_1|x^1| +\cdots +c_d|x^d|
\]
with some positive constants $c_i$. We are going to make explicit the (intuitively clear) fact
 that if $g$ is small enough, the $\min$ in \eqref{eq2mainminmaxopcost}
 is attained on the same $\ga$ as when $g=0$.

\begin{theorem}
\label{thriskneutralcost}
 Let $\{\xi_1,\cdots, \xi_k\}$, $k>d$, be a family of vectors in $\R^d$, satisfying the general position conditions (i) and (ii) of Section
   \ref{secunderlyinggt}.

Let $g(\ga)$ be a non-negative Lipshitz continuous function that has well defined derivatives $D_y(x)$
in all point $x$ and in all directions $y$ such that
for any subfamily $\xi_i$, $i\in I\subset \{\xi_1,\cdots, \xi_k\}$, which does not satisfy (ii), one can choose
an $\om$ defining the subspace containing all $\xi_i$, $i\in I$ (i.e. $(\om, \xi_i)>0$ for all $i\in I$) in such a way that
 \begin{equation}
\label{eq1thriskneutralcost}
(\xi_i, \om)> D_{\ga}(\om), \quad i\in I, \, \ga \in R^d.
\end{equation}

Then the minimum in \eqref{eq2mainminmaxopcost} is finite, is attained on some $\ga_0$
and
\begin{equation}
\label{eq2thriskneutralcost}
\Pi[\xi_1, \cdots , \xi_k](f,g)=\max_{I} [\E_I f(\xi)+g(\ga_I)],
\end{equation}
where $\max$ is taken over all families $\{\xi_i\}_{i\in I}$, $I\subset \{1,\cdots, k\}$
of size $|I|=d+1$ that satisfy (ii) (i.e. such that the origin is contained in the interior of
$\Pi [\xi_i, \, i\in I]$),
and $\E_{I}$ denotes the expectation with respect to the unique risk neutral probability
on $\{\xi_i\}_{i\in I}$ (given by Proposition \ref{propriskneutrald}),
 and $\ga_I$ is the corresponding (unique) optimal values.

In particular, if $k=d+1$, then $\ga_0$ is
given by \eqref{mainminmaxopgaarbd},
as in the case of vanishing $g$, and
\begin{equation}
\label{eq3thriskneutralcost}
\Pi[\xi_1, \cdots , \xi_{d+1}](f,g)=\Pi[\xi_1, \cdots , \xi_{d+1}](f)+g (\ga_0)
\end{equation}
 with $\Pi[\xi_1, \cdots , \xi_{d+1}](f)$ from
\eqref{eq73mainminmaxoparbd}.

\end{theorem}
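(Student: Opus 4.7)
The plan is to adapt the perturbation argument from Propositions \ref{propthreepointriskneutral} and \ref{propriskneutrald} to the perturbed functional, using hypothesis \eqref{eq1thriskneutralcost} to beat the first-order contribution of $g$. First, coercivity is immediate: since $g\ge 0$ and condition (ii) forces every direction $\om$ to have some $(\xi_i,\om)<0$, the map $\ga\mapsto g(\ga)+\max_i[f(\xi_i)-(\xi_i,\ga)]$ is continuous and tends to $+\infty$ at infinity, so a minimizer $\ga_0$ exists. Let $I=\{i:f(\xi_i)-(\xi_i,\ga_0)=\max_j[f(\xi_j)-(\xi_j,\ga_0)]\}$ denote the set of maximizers at $\ga_0$.

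The heart of the proof is the claim that $\{\xi_i\}_{i\in I}$ satisfies (ii). Suppose not; then by hypothesis \eqref{eq1thriskneutralcost} one can choose an $\om$ with $(\xi_i,\om)>D_{\ga_0}(\om)$ for every $i\in I$. Perturbing $\ga_0\mapsto\ga_0+\ep\om$ changes the value $f(\xi_i)-(\xi_i,\ga)+g(\ga)$ at each $i\in I$ by
\[
-\ep(\xi_i,\om)+g(\ga_0+\ep\om)-g(\ga_0)=-\ep\bigl[(\xi_i,\om)-D_{\ga_0}(\om)\bigr]+o(\ep),
\]
which is strictly negative for small $\ep>0$, while values at $j\notin I$ remain strictly below the max by continuity. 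This contradicts the minimality of $\ga_0$, so $\{\xi_i\}_{i\in I}$ satisfies (ii). The Carath\'eodory-type observation used in the proof of Theorem \ref{thriskneutrald} then supplies a subset $I'\subseteq I$ of size $d+1$ satisfying (ii) (and hence also (i), by the remark immediately following Theorem \ref{thriskneutrald}).

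Since $f(\xi_i)-(\xi_i,\ga_0)$ is constant for $i\in I'$, the vector $\ga_0$ solves system \eqref{eq4mainminmaxoparbd} for the subfamily $I'$; uniqueness from Proposition \ref{propriskneutrald} forces $\ga_0=\ga_{I'}$ given by \eqref{mainminmaxopgaarbd}, and the common value equals $\E_{I'}f(\xi)$. Hence $\Pi[\xi_1,\dots,\xi_k](f,g)=\E_{I'}f(\xi)+g(\ga_{I'})\le \max_J[\E_J f(\xi)+g(\ga_J)]$, the max being over admissible subsets of size $d+1$. The matching lower bound follows from the trivial monotonicity $\Pi[\xi_i,i\in J](f,g)\le \Pi[\xi_1,\dots,\xi_k](f,g)$ combined with the specialization of the whole argument to $k=d+1$ (where $|I|\le d$ is forbidden, since such a collection cannot satisfy (ii), which forces $I'$ to be the entire family and establishes both \eqref{eq3thriskneutralcost} and the ``In particular'' clause). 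The expected main obstacle is the perturbation step: hypothesis \eqref{eq1thriskneutralcost} is calibrated precisely so that the drop in the bilinear term strictly dominates the first-order growth of $g$ uniformly in $\ga$, and the Lipschitz property of $g$ is what makes the $o(\ep)$ remainder genuinely negligible.
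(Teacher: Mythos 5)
Your proof is correct and follows essentially the same route as the paper's: the perturbation $\ga_0 \mapsto \ga_0 + \ep\om$ using \eqref{eq1thriskneutralcost} to rule out maximizer sets violating (ii), Carath\'eodory to extract a $(d+1)$-element subfamily, identification of $\ga_0$ and the optimal value via the linear system of Proposition \ref{propriskneutrald}, and the monotonicity $\Pi[\xi_i,\, i\in J](f,g)\le \Pi[\xi_1,\dots,\xi_k](f,g)$ for the matching lower bound. The only cosmetic difference is that you establish coercivity and existence of the minimizer upfront for general $k$ (note that the strict inequality $(\xi_i,\om)<0$ for some $i$ in every direction $\om$ uses condition (i) together with (ii), not (ii) alone), whereas the paper first handles $k=d+1$ and then deduces boundedness below for general $k$ from that case; both orderings are sound.
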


\begin{proof}

Arguing now as in Section \ref{secunderlyinggt}, suppose the $\min$ in \eqref{eq2mainminmaxopcost}
is attained on a vector $\ga_0$ and the corresponding $\max$ is attained precisely on a
subfamily $\xi_i$, $i\in I\subset \{\xi_1,\cdots, \xi_k\}$, so that
\[
f(\xi_i)-(\ga_0,\xi_i)
\]
coincide for all $i\in I$ and
\begin{equation}
\label{eq4thriskneutralcost}
f(\xi_i)-(\ga,\xi_i)+g(\ga)> f(\xi_j)-(\ga,\xi_j)+g(\ga)
\end{equation}
for $j\notin I$ and $\ga=\ga_0$, but this family
does not satisfy (ii).
(This is of course always the case for the subfamilies of the size $|I| <d+1$.)
Let us pick up an $\om $ satisfying \eqref{eq1thriskneutralcost}.
 As for $\ga=\ga_0+\ep\om$,
\[
 f(\xi_i)-(\xi_i,\ga)+g(\ga)
 = f(\xi_i)-(\xi_i,\ga_0)+g(\ga_0)-\ep [(\xi_i,\om)-D_{\om}g(\ga_0)]+o(\ep),
\]
this expression
is less than
\[
f(\xi_i)-(\xi_i,\ga_0)+g(\ga_0)
\]
for small enough $\ep >0$ and all $i\in I$.
But at the same time \eqref{eq4thriskneutralcost} is preserved for small $\ep$
contradicting the minimality of $\ga_0$.
 Hence, if  $\ga_0$ is a minimal point,
 the corresponding $\max$ must be attained on a family satisfying (ii).
 But any such family contains a subfamily with $d+1$ elements only (by the Carath\'eodory theorem).

Do go further, let us assume first that $k=d+1$.
Then a possible value of $\ga_0$ is unique.
Moreover, the minimum exists and is attained on some finite $\ga$, because
\begin{equation}
\label{eq5thriskneutralcost}
  \max_{\xi_1,\cdots,\xi_k} [f(\xi_i)-(\xi_i,\ga) +g(\ga)] \to \infty,
  \end{equation}
as $\ga \to \infty$ (as this holds already for vanishing $g$).
And consequently it is attained on the single possible candidate $\ga_0$.

Let now $k>d+1$ be arbitrary. Using the case $k=d+1$ we can conclude that
\[
\Pi[\xi_1, \cdots , \xi_k](f,g) \ge \max_{I} [\E_I f(\xi)+g(\ga_I)],
\]
and hence the l.h.s is bounded below and \eqref{eq5thriskneutralcost} holds.
Hence the minimum in \eqref{eq2mainminmaxopcost} is attained on some $\ga_0$,
which implies \eqref{eq2thriskneutralcost} due to the characterization of optimal $\ga$
given above.
\end{proof}

\begin{remark}
In case $d=2, k=3$, condition
\eqref{eq1thriskneutralcost} is fulfilled if
for any $i,j$ and $\ga \in \R^2$
 \begin{equation}
\label{remthreepointriskneutralcost}
2|D_{\ga}(\xi_i)|<|\xi_i| \max (|\xi_i|,|\xi_j|) (1+\cos \phi(\xi_i, \xi_j)),
\end{equation}
where by $\phi (x,y)$ we denote the angle between vectors $x,y$.
\end{remark}

Let us turn to the fully nonlinear (in $\ga$) extension of our game-theoretic problem:
to evaluate the minmax expression
\begin{equation}
\label{eq0thriskneutralnonl}
\Pi[\xi_1, \cdots , \xi_k](f)=\min_{\ga \in \R^d} \max_{\xi_1,\cdots, \xi_k} [f(\xi_i, \ga)-(\xi_i,\ga)].
\end{equation}

Let us introduce two characteristics of a system $\{\xi_1, \cdots , \xi_k\}$,
satisfying the general position conditions (i) and (ii) of Section
   \ref{secunderlyinggt}, that measure numerically a spread of the elements of this system
   around the origin.

Let $\ka_1=\ka_1(\xi_1, \cdots , \xi_k)$ be the minimum among the numbers $\ka$ such that
for any subfamily $\xi_i$, $i\in I\subset \{\xi_1,\cdots, \xi_k\}$, which does not satisfy (ii), one can choose
a vector $\om_I \in R^d$ of unit norm such that
 \begin{equation}
\label{eq1thriskneutralnonl}
(\xi_i, \om)\ge \ka, \quad i\in I.
\end{equation}
This $\ka_1$ is clearly positive by conditions (i), (ii).
Let $\ka_2=\ka_2(\xi_1, \cdots , \xi_k)$ be the minimum of the lengths of all perpendiculars from the origin
to the affine hyper-subspaces generated by the end points of any subfamily
containing $d$ vectors.

\begin{theorem}
\label{thriskneutralnonl}
 Let $\{\xi_1,\cdots, \xi_k\}$, $k>d$, be a family of vectors in $\R^d$, satisfying the general position conditions (i) and (ii) of Section
   \ref{secunderlyinggt}.

Let the function $f$ be bounded below and Lipshitz continuous in $\ga$, i.e.
\begin{equation}
\label{eq2thriskneutralnonl}
|f(\xi_i, \ga_1)-f(\xi_i, \ga_2)|\le \ka |\ga_1-\ga_2|
\end{equation}
for all $i$, with a Lipshitz constant that is less than both $\ka_1$ and $\ka_2$:
\begin{equation}
\label{eq21thriskneutralnonl}
\ka < \min (\ka_1, \ka_2).
\end{equation}

Then the minimum in \eqref{eq0thriskneutralnonl} is finite, is attained on some $\ga_0$
and
\begin{equation}
\label{eq3thriskneutralnonl}
\Pi[\xi_1, \cdots , \xi_k](f)=\max_{I} [\E_I f(\xi, \ga_I)],
\end{equation}
where $\max$ is taken over all families $\{\xi_i\}_{i\in I}$, $I\subset \{1,\cdots, k\}$
of size $|I|=d+1$ that satisfy (ii),
$\E_{I}$ denotes the expectation with respect to the unique risk neutral probability
on $\{\xi_i\}_{i\in I}$ (given by Proposition \ref{propriskneutrald}),
 and $\ga_I$ is the corresponding (unique) optimal value, constructed below.

In particular, if $k=d+1$, then $\ga_0$ is
the unique solution of equation \eqref{eqnonlsystforga1} below.
\end{theorem}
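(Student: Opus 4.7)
The plan is to mimic the two-step structure of the proof of Theorem \ref{thriskneutralcost}, but with the Lipschitz hypothesis on $f$ in $\ga$ playing the role that the derivative bound \eqref{eq1thriskneutralcost} played before, and with the geometric characteristic $\ka_2$ controlling a fixed-point iteration that replaces the linear solution formula \eqref{mainminmaxopgaarbd}.

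First I would establish the same structural claim as in Theorem \ref{thriskneutralcost}: if $\ga_0$ is a minimum point and the corresponding $\max$ is attained precisely on a subfamily $\{\xi_i\}_{i\in I}$, then $I$ must satisfy condition (ii). Suppose otherwise; by definition of $\ka_1$ there is a unit vector $\om$ with $(\xi_i,\om)\ge \ka_1$ for all $i\in I$. Along $\ga=\ga_0+\ep\om$ the Lipschitz bound \eqref{eq2thriskneutralnonl} gives
\[
f(\xi_i,\ga)-(\xi_i,\ga)\le f(\xi_i,\ga_0)-(\xi_i,\ga_0)-\ep[(\xi_i,\om)-\ka]\le M-\ep(\ka_1-\ka)
\]
for $i\in I$, while the strict inequalities against $j\notin I$ are preserved for small $\ep$ by continuity. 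Using $\ka<\ka_1$, this contradicts minimality. By Carath\'eodory, $I$ then contains a sub-collection $I_0$ of size exactly $d+1$ still satisfying (ii).

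Next I handle the case $k=d+1$. At a minimizer all $d+1$ expressions must coincide, which is equivalent to the system $(\xi_i-\xi_1,\ga)=f(\xi_i,\ga)-f(\xi_1,\ga)$, $i=2,\dots,d+1$. Freezing the right-hand side and inverting the (nondegenerate, by (i)+(ii)) linear system defines a map $\Phi\colon\R^d\to\R^d$ obtained by substituting $f(\xi_i,\ga)$ into \eqref{mainminmaxopgaarbd}; a fixed point of $\Phi$ is exactly a solution of equation \eqref{eqnonlsystforga1}. The key point is that the linear dependence of \eqref{mainminmaxopgaarbd} on the values $f(\xi_1),\dots,f(\xi_{d+1})$ combined with the geometric estimate of Corollary \ref{cor2topropriskneutrald}, namely $\max_i h_i^{-1}\le 1/\ka_2$, yields
\[
|\Phi(\ga_1)-\Phi(\ga_2)|\le \frac{\ka}{\ka_2}|\ga_1-\ga_2|.
\]
Since $\ka<\ka_2$ this is a strict contraction, so Banach's theorem supplies the unique solution $\ga_0$. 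I expect this contraction step to be the main obstacle, as it is where the role of $\ka_2$ has to be carefully identified; condition \eqref{eq21thriskneutralnonl} is tailored precisely so that both the perturbation argument (requiring $\ka<\ka_1$) and the Banach iteration (requiring $\ka<\ka_2$) go through simultaneously.

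To finish I would verify coercivity so that a minimizer actually exists. Since $f$ is bounded below and the family $\{\xi_1,\dots,\xi_k\}$ has no supporting half-space, compactness gives $c>0$ with $\max_i(-\,(\xi_i,u))\ge c$ for every unit vector $u$, hence $\max_i[f(\xi_i,\ga)-(\xi_i,\ga)]\ge\inf f+c|\ga|\to\infty$. Thus the outer minimum in \eqref{eq0thriskneutralnonl} is attained at some $\ga_0$. For general $k>d+1$, the first step forces $\ga_0$ to coincide with the unique $\ga_{I_0}$ associated to some admissible $(d+1)$-subfamily $I_0$, so the common value is, by averaging against the risk-neutral law on $I_0$ and using $\E_{I_0}(\xi)=0$,
\[
M=\E_{I_0}\bigl[f(\xi,\ga_{I_0})-(\xi,\ga_{I_0})\bigr]=\E_{I_0}f(\xi,\ga_{I_0}).
\]
Combining this with the trivial lower bound $\Pi[\xi_1,\dots,\xi_k](f)\ge\E_I f(\xi,\ga_I)$ for every admissible $I$ (obtained by restricting the $\max$ to $i\in I$ and using the $k=d+1$ case) yields \eqref{eq3thriskneutralnonl}.
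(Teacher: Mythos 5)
Your proposal is correct and follows essentially the same route as the paper's proof: the same perturbation argument via $\ka_1$ to force the active set to satisfy (ii), the same Carath\'eodory reduction to $d+1$ points, the same reformulation of the optimality system as the fixed-point equation \eqref{eqnonlsystforga1} with the contraction constant $\ka/\ka_2<1$ coming from \eqref{eq2estimforPiandga}, and the same passage to general $k$ via the lower bound over admissible subfamilies. You merely spell out a few steps the paper leaves implicit (the coercivity bound and the identity $\E_{I_0}[f(\xi,\ga_{I_0})-(\xi,\ga_{I_0})]=\E_{I_0}f(\xi,\ga_{I_0})$), which is fine.
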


\begin{proof}

As in the proof of Theorem \ref{thriskneutralcost}, using now \eqref{eq2thriskneutralnonl}
and \eqref{eq1thriskneutralnonl}, we show that the minimum
cannot be attained on a $\ga$ such that the corresponding maximum is attained only on a
subfamily $\xi_i$, $i\in I\subset \{\xi_1,\cdots, \xi_k\}$, that
does not satisfy (ii).
And again we conclude that if  $\ga_0$ is a minimal point,
 the corresponding $\max$ must be attained on a family satisfying (ii) and containing
$d+1$ elements only.

Let us assume that $k=d+1$.
Then a possible value of $\ga_0$ satisfies the system
  \begin{equation}
\label{eqnonlsystforga}
 (\xi_i-\xi_1, \ga_0)=f(\xi_i, \ga_0)-f(\xi_1, \ga_0), \quad i=2, \cdots, d+1,
\end{equation}
which by \eqref{eq81mainminmaxoparbd} rewrites as
\begin{equation}
 \label{eqnonlsystforga1}
  \ga_0=\E \left[ f(\xi, \ga_0) \frac{\tilde R (\{\hat \xi \})}{D(\{\hat \xi \})}\right],
\end{equation}
where the expectation is with respect to the probability law
\eqref{eq6mainminmaxoparbd}.
This is a fixed point equation. Condition \eqref{eq2thriskneutralnonl},
\eqref{eq21thriskneutralnonl}, the definition of $\ka_2$
and estimate \eqref{eq2estimforPiandga} imply that the mapping on the r.h.s. is a contraction,
and hence equation \eqref{eqnonlsystforga1} has a unique solution $\ga_0$.

Moreover, the minimum in \eqref{eq0thriskneutralnonl} exists and is attained on some finite $\ga$, because
\begin{equation}
\label{eq5thriskneutralnonl}
  \max_{\xi_1,\cdots,\xi_k} [f(\xi_i,\ga)-(\xi_i,\ga)] \to \infty,
  \end{equation}
as $\ga \to \infty$ (as this holds already for vanishing $f$).
And consequently it is attained on the single possible candidate $\ga_0$.

Let now $k>d+1$ be arbitrary. Using the case $k=d+1$ we can conclude that
\[
\Pi[\xi_1, \cdots , \xi_k](f) \ge \max_{I} \E_I f(\xi, \ga_I),
\]
and hence the l.h.s is bounded below and \eqref{eq5thriskneutralnonl} holds.
Hence the minimum in \eqref{eq0thriskneutralnonl} is attained on some $\ga_0$,
which implies \eqref{eq3thriskneutralnonl} due to the characterization of optimal $\ga$
given above.
\end{proof}

In applications to options we need to use Theorem \ref{thriskneutralnonl}
recursively under expanding systems of vectors $\xi$. To this end, we require some estimates
indicating the change of basic coefficients of spread under linear scaling of all co-ordinates.

For a vector $z \in \R^d_+$ with positive coordinates let
\[
\de (z)= \max_i z^i/\min_i z^i.
\]

\begin{prop}
\label{propchangeogkabyscale}
Let a system $\{\xi_1, \cdots , \xi_k\}$ of vectors in $\R^d$
satisfy the general position conditions (i) and (ii) of Section
\ref{secunderlyinggt}. Let $\ka_1, \ka_2$ be the characteristics
of the system $\{\xi_1, \cdots , \xi_k\}$ introduced above and,
for a vector $z\in \R^d$ with positive co-ordinates,
let $\ka_1 (z), \ka_2(z)$ denote the characteristics
of the system $\{z\circ \xi_1, \cdots , z\circ \xi_k\}$. Then
\begin{equation}
\label{eq1propchangeogkabyscale}
\ka_1(z) \ge |z| \ka_1 (d\de (z))^{-1},
\quad \ka_2(z) \ge |z| \ka_2 (\sqrt d\de (z))^{-1}.
\end{equation}
\end{prop}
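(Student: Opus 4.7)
The plan is to derive both inequalities from a single stronger bound,
\[
\ka_1(z) \ge \ka_1 \min_j z^j, \qquad \ka_2(z) \ge \ka_2 \min_j z^j,
\]
and then to observe that $|z|^2\le d(\max_j z^j)^2$ yields $|z|\le \sqrt d\,\de(z)\min_j z^j$, hence $\min_j z^j \ge |z|/(\sqrt d\,\de(z)) \ge |z|/(d\,\de(z))$, which delivers both claims of the proposition.

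For $\ka_1$, I would exploit the self-adjointness of the diagonal rescaling $T:x\mapsto z\circ x$, namely $(z\circ\xi,\om)=(\xi,z\circ\om)$. This identity, together with the bijection $\om\leftrightarrow \om\circ (1/z^1,\dots,1/z^d)$ of $\R^d$ with itself, sets up a bijection between subfamilies of $\{\xi_1,\dots,\xi_k\}$ violating (ii) and subfamilies of $\{z\circ\xi_1,\dots,z\circ\xi_k\}$ violating (ii). Given such an $I$ and a unit witness $\om_I$ for the original family with $(\xi_i,\om_I)\ge\ka_1$, I would define $\om'$ coordinatewise by $(\om')^j = \om_I^j/z^j$, so that $(z\circ\xi_i,\om')=(\xi_i,\om_I)\ge\ka_1$. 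The estimate $|\om'|^2 = \sum_j (\om_I^j)^2/(z^j)^2 \le 1/(\min_j z^j)^2$ then allows me to normalize $\om'$ and produce a unit witness for the scaled subfamily with inner-product bound at least $\ka_1\min_j z^j$.

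For $\ka_2$, I would fix a $d$-element subfamily whose endpoints $v_1,\dots,v_d$ generate an affine hyperplane $H$ with unit normal $\n$, so that the distance from the origin to $H$ equals $|(v_1,\n)|\ge\ka_2$. The image $T(H)$ is an affine hyperplane with direction space spanned by the vectors $z\circ(v_j-v_1)$; its unit normal $\tilde\n$ must satisfy $(z\circ\tilde\n,v_j-v_1)=0$ for every $j$, which forces $z\circ\tilde\n$ to be parallel to $\n$ and hence $\tilde\n^j = \mu\,\n^j/z^j$ with $\mu$ fixed by $|\tilde\n|=1$. A short calculation then gives
\[
\text{dist}(0,T(H)) = |(z\circ v_1,\tilde\n)| = \frac{|(v_1,\n)|}{\sqrt{\sum_j (\n^j/z^j)^2}} \ge |(v_1,\n)|\min_j z^j \ge \ka_2\min_j z^j,
\]
the middle inequality using the same estimate $\sum_j (\n^j/z^j)^2 \le 1/(\min_j z^j)^2$ as in the $\ka_1$ step.

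The main obstacle is the $\ka_2$ step: because $T$ is not an isometry, the unit normal to $T(H)$ is not obtained by applying $T$ to $\n$ but rather by the inverse coordinatewise scaling $\n^j/z^j$, and correctly identifying this transformation rule is what produces the $\min_j z^j$ factor. The $\ka_1$ step is a direct consequence of the self-adjointness identity, and the concluding passage from $\min_j z^j$ to the stated bounds is a one-line norm comparison.
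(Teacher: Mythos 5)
Your proof is correct, and for the first inequality it is essentially the paper's own argument: the paper also passes to the rescaled witness $z^{-1}\circ \om$ (normalized), the only difference being that it bounds its norm by $|z^{-1}|\le \sqrt d/\min_j z^j$ and then uses $|z|\,|z^{-1}|\le d\de(z)$, whereas you bound it directly by $1/\min_j z^j$ — which is why you end up with the slightly sharper constant $\sqrt d$ in place of $d$ for $\ka_1$ as well. For the second inequality the paper takes a more computational route: it invokes the formula $h=D(u_1,\dots,u_d)/\|\tilde R(u_1,\dots,u_d)\|$ for the distance from the origin to the affine hyperplane, and then tracks how the determinant $D$ and the rotor $R$ scale under $x\mapsto z\circ x$ (namely $D(\{z\circ u_i\})=\prod_l z_l\, D(\{u_i\})$ and $R^j(\{z\circ u_i\})=z_j^{-1}\prod_l z_l\, R^j(\{u_i\})$). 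Your argument replaces this multilinear-algebra bookkeeping with the direct geometric observation that the unit normal of the image hyperplane is proportional to $z^{-1}\circ \n$, which is the same transformation rule the rotor computation encodes; the resulting norm estimate $\sum_j(\n^j/z^j)^2\le (\min_j z^j)^{-2}$ is identical in substance to the paper's bound on $\|R(\{z\circ u_i\})\|$. What your version buys is a uniform intermediate statement $\ka_i(z)\ge \ka_i\min_j z^j$ covering both characteristics at once, no reliance on the rotor formalism, and marginally better constants; what the paper's version buys is consistency with the $D$/$\tilde R$ machinery it has already set up for \eqref{eq81mainminmaxoparbd} and \eqref{eqweightratioforga}. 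Both are complete proofs of the stated inequalities.
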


\begin{proof}
Let us denote by $z^{-1}$, just for this proof,
the vector in $\R^d$ with co-ordinates $z_i^{-1}$.

For a unit vector $\phi=|z^{-1}|^{-1}z^{-1}\circ \om$, we get,
using $(\xi_i, \om)\ge \ka_1$ that
\[
(z\circ \xi_i, \phi)=|z^{-1}|(\xi_i, \om)\ge |z| \ka_1 \frac{1}{|z|\, |z^{-1}|}.
\]
Hence to get the first inequality in \eqref{eq1propchangeogkabyscale} it remains to observe
that
\[
 |z|\, |z^{-1}|=(\sum_{i=1}^d z_i^2 \sum_{i=1}^d z_i^{-2})^{1/2} \le d\de (z).
 \]
 Turning to the proof of the second inequality in \eqref{eq1propchangeogkabyscale} let
 us recall that for any subsystem of $d$ elements that we denote by $u_1,\cdots, u_d$
 the length perpendicular $h$ from the origin to the affine hyperspace generated by the
  end points of vectors $\{u_1,\cdots, u_d\}$ is expressed, by \eqref{eqweightratioforga}, as
 \begin{equation}
 \label{eqweightratioforga1}
  h=\frac{D(u_1,\cdots, u_d)}{\|\tilde R (u_1,\cdots, u_d)\|}.
\end{equation}
From the definition of $D$ as a determinant it follows that
\[
D(\{z\circ u_i\})=\prod_{l=1}^d z_l D(\{u_i\}).
\]
Next, for the $j$th co-ordinate of the rotor $R(\{z\circ u_i\})$ we have
\[
  R^j(\{z\circ u_i\})=\frac{1}{z_j}\prod_{l=1}^d z_l R^j(\{u_i\}),
  \]
  so that
  \[
  \|R(\{z\circ u_i\})\|
  =\prod_{l=1}^d z_l \left(\sum_j \frac{1}{z_j^2} (R^j(\{u_i\}))^2\right)^{1/2}
  \]
  \[
  \le \prod_{l=1}^d z_l  \frac{1}{\min_j z_j} \|R(\{u_i\})\|
  \le \frac{1}{|z|}\prod_{l=1}^d z_l \sqrt d \de (z) \|R(\{u_i\})\|.
  \]
  Hence
  \[
  h(z)= \frac{D(\{z\circ u_i\})}{\|\tilde R (\{z\circ u_i\})\|}
  \ge |z| (\sqrt d \de (z))^{-1},
  \]
  implying the second inequality in \eqref{eq1propchangeogkabyscale}.
\end{proof}

\section{Back to options; properties of solutions: non-expansion and homogeneity}
\label{secbacktooptions}

Let us now calculate the reduced Bellman operator of European colored options
given by \eqref{eqBellmanforop}.
  Changing variables $\xi=(\xi^1,\dots, \xi^J)$ to
$\eta=\xi \circ z$ yields
\begin{equation}
\label{eqBellmanforop1}
(\BC f)(z^1,...,z^J)=\min_{\ga}\max_{\{\eta \in [z^id_i,z^i u_i]\}}
[f(\eta)
-\sum_{i=1}^J\ga^i (\eta^i-\rho z^i)],
\end{equation}
or, by shifting,
\begin{equation}
\label{eqBellmanforop2}
(\BC f)(z^1,...,z^J)=\min_{\ga}\max_{\{\eta \in [z^i(d_i-\rho),z^i (u_i-\rho)]\}}
[\tilde f(\eta)-(\ga, \eta)]
\end{equation}
with $\tilde f(\eta)=f(\eta+\rho z)$. Assuming $f$ is convex (possibly not strictly),
 we find ourselves in the setting of Section \ref{secunderlyinggt}
with $\Pi$ being the rectangular parallelepiped
\[
\Pi_{z,\rho}= \times_{i=1}^J [z^i(d_i-\rho),z^i (u_i-\rho)],
\]
with vertices
\[
\eta_I= \xi_I \circ z -\rho z,
\]
where
\[
\xi_I
= \{ d_i|_{i\in I}, u_j|_{j \notin I}\},
\]
are the vertices of the normalized parallelepiped
\begin{equation}
\label{eqparalforop}
\Pi= \times_{i=1}^J [d_i, u_i],
\end{equation}
parametrized by all subsets (including the empty one) $I\subset \{1,\dots,J\}$.

Since the origin is an internal point of $\Pi$ (because $d_i<\rho <u_i$), condition (ii) of Theorem
\ref{thpolyhedralriskneutrald} is satisfied. Condition (i) is rough in the sense that
 it is fulfilled for an open dense subset of pairs $(d_i,u_i)$. Applying
 Theorem \ref{thpolyhedralriskneutrald} (and Remark \ref{remlinearpartofBelop}) to \eqref{eqBellmanforop2}
 and returning back to $\xi$ yields the following.

\begin{theorem}
\label{thbasicEuroprainbowBel}
If the vertices $\xi_I$ of the parallelepiped $\Pi$ are in general position in the sense that
for any $J$ subsets $I_1,\cdots, I_J$, the vectors $\{\xi_{I_k}-\rho \1\}_{k=1}^J$
are independent in $\R^J$, then
\begin{equation}
\label{eq1thbasicEuroprainbowBel}
(\BC f)(z)=\max_{\{\Om\}} \E_{\Om} f(\xi \circ z), \quad z=(z^1,\cdots, z^J),
\end{equation}
where $\{\Om\}$ is the collection of all subsets $\Om =\xi_{I_1},\cdots,\xi_{I_{J+1}}$
of the set of vertices of $\Pi$, of size $J+1$, such that their convex hull
contains $\rho \1$ as an interior point ($\1$ is the vector with all coordinates 1), and where $\E_{\Om}$ denotes the expectation with respect to the
unique probability law $\{p_I\}$, $\xi_I\in \Om$,
 on the set of vertices of $\Pi$, which is supported on $\Om$
 and is risk neutral with respect to $\rho \1$, that is
\begin{equation}
\label{eq2thbasicEuroprainbowBel}
\sum_{I\subset \{1,\dots,J\}} p_I\xi_I=\rho \1.
\end{equation}
Moreover, if
\[
 f(\xi \circ z )-(\ga_{I_1,\cdots,I_{J+1}}, (\xi -\rho \1)\circ z)
\ge  f(\zeta \circ z)-(\ga_{I_1,\cdots,I_{J+1}}, (\zeta-\rho \1)\circ z)
\]
for all vertices $\xi, \zeta$ such that $\xi \in \Om$ and $\zeta \notin \Om$,
where $\ga_{I_1,\cdots,I_{J+1}}$ is the corresponding optimal value
for the polyhedron $\Pi[\xi_{I_1},\cdots,\xi_{I_{J+1}}]$, then
\begin{equation}
\label{eq3thbasicEuroprainbowBel}
(\BC f)(z^1,...,z^J)= \E_{\Om} f(\xi \circ z).
\end{equation}
\end{theorem}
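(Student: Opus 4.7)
The plan is to view the formula \eqref{eqBellmanforop2} as an instance of the abstract minmax problem \eqref{eq10mainminmaxoparbd} and then invoke Theorem \ref{thpolyhedralriskneutrald} (together with Remark \ref{remlinearpartofBelop}). The verification splits into three bookkeeping steps: checking general position, translating the risk-neutral condition back to $\xi$-coordinates, and unwinding the shift $\tilde f(\eta)=f(\eta+\rho z)$.

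First I would rewrite \eqref{eqBellmanforop2} as $(\BC f)(z)=\Pi[\eta_{I_1},\dots,\eta_{I_{2^J}}](\tilde f)$, where $\eta_I=(\xi_I-\rho\1)\circ z$ ranges over the vertices of the shifted parallelepiped $\Pi_{z,\rho}$. Because each $z^i>0$ and $d_i<\rho<u_i$, every $\eta_I$ has nonvanishing coordinates of prescribed signs, and in particular the origin lies in the interior of $\Pi_{z,\rho}$, which gives condition (ii) of Section \ref{secunderlyinggt} in dimension $d=J$. Condition (i) — that no $J$ of the $\eta_I$ are linearly dependent — follows from the assumed general position of the $\{\xi_{I_k}-\rho\1\}$ together with the fact that pointwise multiplication by a vector $z$ with strictly positive coordinates preserves linear independence.

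Next I would apply Theorem \ref{thpolyhedralriskneutrald} to obtain
\[
(\BC f)(z)=\Pi[\{\eta_I\}](\tilde f)=\max_{\{\Om\}}\E_{\Om}\tilde f(\eta),
\]
where the maximum runs over the $(J+1)$-subsets $\Om=\{\eta_{I_1},\dots,\eta_{I_{J+1}}\}$ whose convex hull contains the origin in its interior, and $\E_\Om$ is the unique risk-neutral probability $\{p_I\}$ on $\Om$ satisfying $\sum_I p_I\eta_I=0$. Substituting $\eta_I=(\xi_I-\rho\1)\circ z$ and using again that $z^i>0$, the condition $\sum p_I\eta_I=0$ is equivalent to $\sum p_I(\xi_I-\rho\1)=0$, i.e.\ to \eqref{eq2thbasicEuroprainbowBel}; simultaneously, the requirement that $\Om$ contain $0$ in its interior is equivalent, after the coordinatewise $z$-scaling and the $\rho\1$-translation, to the convex hull of $\{\xi_{I_1},\dots,\xi_{I_{J+1}}\}$ containing $\rho\1$ in its interior. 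Finally, $\tilde f(\eta_I)=f(\eta_I+\rho z)=f(\xi_I\circ z)$, so $\E_\Om\tilde f(\eta)=\E_\Om f(\xi\circ z)$, producing \eqref{eq1thbasicEuroprainbowBel}.

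For the last assertion \eqref{eq3thbasicEuroprainbowBel}, I would invoke Remark \ref{remlinearpartofBelop}: the inequality hypothesis on $f(\xi\circ z)-(\ga_{I_1,\dots,I_{J+1}},(\xi-\rho\1)\circ z)$ for $\xi\in\Om$ against $\zeta\notin\Om$ is precisely \eqref{eq12mainminmaxoparbd} translated through the same change of variables, so the remark asserts that on such $f$ the mapping $\Pi[\{\eta_I\}](\tilde f)$ coincides with $\E_\Om\tilde f(\eta)=\E_\Om f(\xi\circ z)$, i.e.\ the single triple $\Om$ already attains the $\max$. The only real content beyond Theorem \ref{thpolyhedralriskneutrald} is the coordinate translation described above; the main thing to be careful about is that the diagonal scaling $\eta\mapsto z\circ\eta$ with $z>0$ simultaneously preserves linear independence, interiority of $0$, and the risk-neutrality relation (after the affine shift by $\rho\1$), so no new nondegeneracy assumption is needed beyond the one stated.
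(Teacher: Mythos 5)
Your proposal is correct and follows essentially the same route as the paper: change of variables $\eta=\xi\circ z$, shift by $\rho z$, verification of conditions (i) and (ii) for the scaled vertices, application of Theorem \ref{thpolyhedralriskneutrald} and Remark \ref{remlinearpartofBelop}, and translation back to $\xi$-coordinates. The only point you leave tacit that the paper states explicitly is that convexity of $f$ is what justifies replacing the maximum over the whole parallelepiped $\Pi_{z,\rho}$ by the maximum over its finitely many vertices before the abstract machinery applies.
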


Risk neutrality now corresponds to its usual meaning in finances, i.e.
\eqref{eq2thbasicEuroprainbowBel} means that all discounted
stock prices are martingales.

Notice that the $\max$ in \eqref{eq1thbasicEuroprainbowBel} is over a finite
number of explicit expressions, which is of course a great achievement as compared
with initial minimax over an infinite set. In particular, it reduces the calculation
of the iterations $\BC^n f$ to the calculation for a control Markov chain.
Let us also stress that the number of eligible $\Om$ in \eqref{eq1thbasicEuroprainbowBel} is
the number of different pyramids (convex polyhedrons with $J+1$ vertices) with vertices
taken from the vertices of $\Pi$ and containing $\rho \1$ as an interior point. Hence this number can be effectively
calculated.

\begin{remark}
Here we used the model of jumps, where each $\xi^i$ can jump
independently in its interval. Thus we used the theory of Section \ref{secunderlyinggt}
only for the case of a polyhedral $\Pi$ being a parallelepiped.
The results of Section \ref{secunderlyinggt} are given in a more general form to allow
more general models of correlated jumps, see end of Section \ref{secpathdep}.
\end{remark}

Let us point our some properties of the operator $\BC$ given by
\eqref{eq1thbasicEuroprainbowBel} that are obvious, but important for practical calculations:
 it is non-expansive:
\[
\| \BC (f_1)- \BC (f_2) \| \le \| f_1-f_2\|,
\]
and homogeneous (both with respect to addition and multiplication):
\[
 \BC (\la + f)=\la + \BC (f), \quad \BC (\la f)=\la \BC (f)
 \]
for a function $f$ and $\la \in \R$ (resp. $\la >0$) for the first (resp second) equation.
Finally, if $f_p$ is a power function, that is
\[
f_p(z)= (z^1)^{i_1} \cdots (z^J)^{i_J},
\]
then $f_p(\xi \circ z)= f_p(\xi)f_p(z)$ implying
\begin{equation}
\label{eqBelonpowers}
(\BC ^n f_p)(z)=((\BC f_p)(\1))^n f_p(z).
\end{equation}
Therefore, power functions are invariant under $\BC$ (up to a multiplication by a constant).
Consequently, if for a payoff $f$ one can find a reasonable approximation by a power function,
that is there exists a power function $f_p$ such that
$\|f-f_p\| \le \ep$, then
\begin{equation}
\label{eqBelonpowersap}
\|\BC ^n f- \la ^n f_p \|=\|f-f_p\| \le \ep, \quad \la = (\BC f_p) (\1),
\end{equation}
so that an approximate calculation of $\BC ^n f$ is reduced to the calculation of one number
$\la $. This implies the following scheme for an approximate evaluation of $\BC$: first
find the best fit to $f$ in terms of functions $\al +f_p$ (where $f_p$ is a power function and $\al$
a constant) and then use \eqref{eqBelonpowersap}.

\section{Sub-modular payoffs}
\label{secsubmodular}

One can get essential reduction in the combinatorics of Theorem \ref{thbasicEuroprainbowBel}
(i.e. in the number of eligible $\Om$) under additional assumptions on the payoff $f$.
The most natural one in the context of options turns out to be the notion of sub-modularity.
A function $f: \R^2_+ \to \R_+$ is called {\it sub-modular}, if the inequality
\[
f(x_1,y_2)+f(x_2,y_1) \ge f(x_1,x_1)+f(y_1,y_2)
\]
holds whenever $x_1 \le y_1$ and $x_2 \le y_2$.
Let us call a function $f: \R^d_+ \to \R_+$ {\it sub-modular} if it is sub-modular with
respect to any two variables.

\begin{remark} If $f$ is twice continuously differentiable, then it
is sub-modular if and only if $\frac{\pa^2 f}{\pa z_i\pa z_j}\leq 0$ for all $i\neq j$.
\end{remark}

As one easily sees, the payoffs of the first three examples of rainbow options,
given in Section \ref{Options}, that is those defined by
\eqref{best risky assets}, \eqref{calls on max}, \eqref{multiple strike options},
are sub-modular. Let us explain,  on the examples of two and three colors $J$,
 how the assumptions of sub-modularity can simplify
Theorem \ref{thbasicEuroprainbowBel}.

Let first $J=2$. The polyhedron \eqref{eqparalforop} is then a rectangle. From sub-modularity
of $f$ it follows that if $\Om$ in Theorem \ref{thbasicEuroprainbowBel} is either
\[
\Om_{12}= \{ (d_1,d_2), (d_1,u_2), (u_1,u_2) \},
\]
or
\[
\Om_{21}= \{ (d_1,d_2), (u_1,d_2), (u_1,u_2) \},
\]
then
$(f,\xi)-(\ga_0,\xi)$ coincide for all vertices $\xi$ of $\Pi$.
Hence $\Om_{12}$ and $\Om_{21}$ can be discarded in Theorem \ref{thbasicEuroprainbowBel}, i.e the maximum
is always achieved either on
\[
\Om_d= \{ (d_1,d_2), (d_1,u_2), (u_1,d_2) \},
\]
or on
\[
\Om_u= \{ (d_1,u_2), (u_1, d_2), (u_1,u_2) \}.
\]
But the interiors of the triangle formed by $\Om_u$ and $\Om_d$ do not intersect, so that
each point of $\Pi$ (in general position) lies only in one of them (and this position does not depend
 any more on $f$). Hence, depending on the position of $\rho \1$ in $\Pi$, the expression
\eqref{eq1thbasicEuroprainbowBel} reduces either to
$\E_{\Om_u}$ or to $\E_{\Om_d}$. This yields the following result (obtained in
\cite{Ko98}).

\begin{theorem}
\label{twostocks}
 Let $J=2$ and $f$ be convex sub-modular.
Denote
\begin{equation}
\kappa =\frac{(u_1u_2-d_1d_2)-\rho
(u_1-d_1+u_2-d_2)}{(u_1-d_1)(u_2-d_2)}
=1-\frac{\rho-d_1}{u_1-d_1}-\frac{\rho-d_2}{u_2-d_2}.
\end{equation}
If $\kappa \geq 0$, then $(\mathcal{B}f)(z_1,z_2)$ equals
\begin{equation}
 \frac{\rho-d_1}{u_1-d_1}f(u_1z_1,d_2z_2)+\frac{\rho
-d_2}{u_2-d_2}f(d_1z_1,u_2z_2)+\kappa f(d_1z_1,d_2z_2),
\end{equation}
and the corresponding optimal strategies are
\[
\ga^1=\frac{f(u_1z_1,d_2z_2)-f(d_1z_1,d_2z_2)}{z_1(u_1-d_1)},
\quad
\ga^2=\frac{f(d_1z_1,u_2z_2)-f(d_1z_1,d_2z_2)}{z_2(u_2-d_2)}.
\]
If $\kappa \leq 0$, the $(\mathcal{B}f)(z_1,z_2)$ equals
\begin{equation}
 \frac{u_1-\rho}{u_1-d_1}f(d_1z_1,u_2z_2)+\frac{u_2-\rho}{u_2-d_2}f(u_1z_1,d_2z_2)+|\kappa| f(u_1z_1,u_2z_2),
\end{equation}
and
\[
\ga^1=\frac{f(u_1z_1,u_2z_2)-f(d_1z_1,u_2z_2)}{z_1(u_1-d_1)},
\quad
\ga^2=\frac{f(u_1z_1,u_2z_2)-f(u_1z_1,d_2z_2)}{z_2(u_2-d_2)}.
\]
\end{theorem}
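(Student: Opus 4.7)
The plan is to specialize Theorem \ref{thbasicEuroprainbowBel} to the two-dimensional rectangle $\Pi = [d_1, u_1] \times [d_2, u_2]$, whose four vertices produce exactly $\binom{4}{3} = 4$ candidate triples $\Omega$. Two of these are the diagonal triples $\Omega_{12} = \{(d_1,d_2), (d_1,u_2), (u_1,u_2)\}$ and $\Omega_{21} = \{(d_1,d_2), (u_1,d_2), (u_1,u_2)\}$ containing both endpoints of the main diagonal; the other two are the corner triangles $\Omega_d$ (missing $(u_1, u_2)$) and $\Omega_u$ (missing $(d_1, d_2)$), whose interiors are disjoint and whose union tiles $\Pi$ along the anti-diagonal from $(d_1, u_2)$ to $(u_1, d_2)$. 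My argument proceeds in three steps: eliminate the diagonal triples by sub-modularity, identify which corner triangle contains $\rho \1$, and then read off the risk-neutral law and the optimal hedge on that triangle.

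For the elimination, I would appeal to Remark \ref{remlinearpartofBelop}. Solving the coincidence equations that define $\gamma_{\Omega_{12}}$ and substituting into the functional $f(\xi \circ z) - (\gamma, (\xi - \rho \1) \circ z)$ at the omitted vertex $(u_1, d_2)$ shows, after a short cancellation, that the value there exceeds the coincident value on $\Omega_{12}$ by exactly
\[
[f(d_1 z_1, u_2 z_2) + f(u_1 z_1, d_2 z_2)] - [f(d_1 z_1, d_2 z_2) + f(u_1 z_1, u_2 z_2)],
\]
which is non-negative by sub-modularity of $f$. By Remark \ref{remlinearpartofBelop}, $\Omega_{12}$ can attain the maximum in \eqref{eq1thbasicEuroprainbowBel} only if the reverse inequality holds at every omitted vertex, so $\Omega_{12}$ is dominated by a triple containing $(u_1, d_2)$; the symmetric computation rules out $\Omega_{21}$.

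Next, the line through $(d_1, u_2)$ and $(u_1, d_2)$ has equation $(u_2 - d_2)(x_1 - d_1) + (u_1 - d_1)(x_2 - u_2) = 0$; substituting $x_1 = x_2 = \rho$ and dividing by $(u_1 - d_1)(u_2 - d_2)$ produces exactly $-\kappa$, so $\rho \1 \in \Omega_d$ iff $\kappa \geq 0$ and $\rho \1 \in \Omega_u$ iff $\kappa \leq 0$. In the case $\kappa \geq 0$, I would solve \eqref{eq2thbasicEuroprainbowBel} restricted to the three vertices of $\Omega_d$: the two coordinate equations yield probabilities $(\rho - d_1)/(u_1 - d_1)$ at $(u_1, d_2)$ and $(\rho - d_2)/(u_2 - d_2)$ at $(d_1, u_2)$, with the remaining mass $\kappa$ sitting at $(d_1, d_2)$, which is non-negative precisely under the hypothesis. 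The hedge is recovered from the $3 \times 2$ linear system \eqref{eq4mainminmaxoparbd} on $\Omega_d$, giving the stated formulas for $\gamma^1, \gamma^2$. The case $\kappa \leq 0$ is symmetric, with $\Omega_u$ replacing $\Omega_d$ and factors $(u_i - \rho)$ replacing $(\rho - d_i)$ in the probabilities.

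The main obstacle I anticipate is the first step: one must invoke the precise ``if and only if'' in Remark \ref{remlinearpartofBelop} and notice that the sub-modularity defect carries exactly the sign opposite to the one required for $\Omega_{12}$ (or $\Omega_{21}$) to be a maximizer. Everything else is elementary two-dimensional linear algebra and plane geometry.
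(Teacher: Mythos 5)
Your proposal is correct and follows essentially the same route as the paper: the paper likewise specializes Theorem \ref{thbasicEuroprainbowBel} to the rectangle, uses sub-modularity (via the identity that the linear parts cancel on the parallelogram relation $(d_1,d_2)+(u_1,u_2)=(d_1,u_2)+(u_1,d_2)$) to discard the two diagonal triples $\Om_{12},\Om_{21}$, observes that $\Om_d$ and $\Om_u$ tile $\Pi$ so that the sign of $\kappa$ selects the unique eligible triangle containing $\rho\1$, and then reads off the risk-neutral weights and the equalizing $\ga$. Your computations (the anti-diagonal line evaluating to $-\kappa$ at $\rho\1$, the probabilities, and the hedge ratios) all check out.
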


Clearly the linear operator $\mathcal{B}$ preserves the set of
convex sub-modular functions. Hence one can use
this formula recursively to obtain all powers of
$\mathcal{B}$ in a closed form. For instance in case $\kappa =0$ one obtains for the
hedge price the following two-color extension of the classical Cox-Ross-Rubinstein formula:
\begin{equation}
\label{crr2}
\mathcal{B}^n f(S_0^1,S_0^2)
=\rho^{-n}\sum_{k=0}^n C^k_n
\left(\frac{\rho-d_1}{u_1-d_1}\right)^k
\left(\frac{\rho-d_2}{u_2-d_2}\right)^{n-k} f(u_1^kd_1^{n-k}S_0^1,
d_2^ku_2^{n-k}S_0^2).
\end{equation}

Now let $J=3$. Then polyhedron \eqref{eqparalforop} is a parallelepiped in $\R^3$.
From sub-modularity projected on the first two co-ordinates we conclude that whenever
the vertices $(d_1,d_2,d_3)$ and $(u_1,u_2,d_3)$ are in $\Om$, then
\[
f(\xi)-(\ga_0,\xi)
\]
should coincide for $\xi$ being $(d_1,d_2,d_3)$, $(u_1,u_2,d_3)$,
$(u_1,d_2,d_3)$, $(d_1,u_2,d_3)$. In other word, the pair $(d_1,d_2,d_3)$, $(u_1,u_2,d_3)$
can be always substituted by the pair $(u_1,d_2,d_3)$, $(d_1,u_2,d_3)$. Consequently,
those $\Om$ containing the pair $(d_1,d_2,d_3), (u_1,u_2,d_3)$ are superfluous, they can
be discarded from the possible $\Om$ competing in formula \eqref{eq1thbasicEuroprainbowBel}.
Similarly, we can discard all those $\Om$ containing six pairs, three of which
containing $(d_1,d_2,d_3)$ and one among $(d_1,d_2,u_3)$, $(d_1,u_2,d_3)$, $(u_1,d_2,d_3)$,
and other three containing $(u_1,u_2,u_3)$  and one among $(d_1,u_2,u_3)$, $(u_1,u_2,d_3)$, $(u_1,d_2,u_3)$.

These considerations reduce dramatically the number of eligible $\Om$.
In particular, if $\rho\1$ lies in the tetrahedron $\Om_d$ formed by the vertices
$(d_1,d_2,d_3)$, $(d_1,d_2,u_3)$, $(d_1,u_2,d_3)$, $(u_1,d_2,d_3)$,
then the only eligible $\Om$ is $\Om_d$. If $\rho\1$ lies in the tetrahedron $\Om_u$ formed by the vertices
$(u_1,u_2,u_3)$, $(d_1,u_2,u_3)$, $(u_1,u_2,d_3)$, $(u_1,d_2,u_3)$,
then the only eligible $\Om$ is $\Om_u$.
Formally these cases are easily seen to be distinguished by the inequalities
$\al_{123} >0$ and $\al_{123} <-1$ respectively, where
\begin{equation}
\label{eqalforthreeop}
\al_{123}=\left( 1-\frac{u_1-\rho}{u_1-d_1}-\frac{u_2-\rho}{u_2-d_2}
-\frac{u_3-\rho}{u_3-d_3}\right).
\end{equation}

This yields the following result (by continuity we are able to write $\al_{123} \ge 0$ instead of a strict inequality), where we use the following notation:
for a set $I\subset \left\{ 1,2,...,J\right\} $,
$f_{I}(z)$ is $f(\xi ^1z_1, \cdots,\xi^Jz_J)$ with $\xi^i=d_i$ for $i\in I$
and $\xi_i=u_i$ for $i\notin I$.

\begin{theorem}
\label{threestocks1}
Let $J=3$ and $f$ be continuous convex and
sub-modular.

(i) If $\al_{123} \ge 0$, then

\begin{equation}
(\mathcal{B}f)(z)
=\frac{1}{\rho}\left[\al_{123}f_{\emptyset }(z)
+\frac{u_1-\rho}{u_1-d_1}f_{\{1\}}(z)
+\frac{u_2-\rho}{u_2-d_2}f_{\{2\}}(z)
+\frac{u_3-r}{u_3-d_3} f_{\{3\}}(z)\right].
\end{equation}

(ii) If $\alpha _{123}\leq -1$, then

\begin{equation}
(\mathcal{B}f)(z)
=\frac{1}{\rho}\left[-(\al_{123}+1)f_{\{1,2,3\}}(z)
+\frac{\rho-d_1}{u_1-d_1}f_{\{2,3\}}(z)
+\frac{\rho-d_2}{u_2-d_2} f_{\{1,3\}}(z)
+\frac{\rho-d_3}{u_3-d_3}f_{\{1,2\}}(z)\right].
\end{equation}
\end{theorem}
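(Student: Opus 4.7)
The theorem will follow from the specialization of Theorem~\ref{thbasicEuroprainbowBel} to $J=3$, combined with a sub-modularity-driven reduction of the candidate tetrahedra appearing in \eqref{eq1thbasicEuroprainbowBel}.

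\emph{Identifying the two corner tetrahedra and their risk-neutral weights.} Label the eight vertices of $\Pi = \times_{i=1}^3[d_i,u_i]$ by subsets $I\subset\{1,2,3\}$, writing $v_I$ for the vertex whose $i$th coordinate equals $d_i$ if $i\in I$ and $u_i$ otherwise, so that $f_I(z)=f(v_I\circ z)$. Consider the two corner tetrahedra
\[
\Om_u = \{v_\emptyset, v_{\{1\}}, v_{\{2\}}, v_{\{3\}}\}, \qquad \Om_d = \{v_{\{1,2,3\}}, v_{\{2,3\}}, v_{\{1,3\}}, v_{\{1,2\}}\}.
\]
Solving the barycentric linear system $\sum_I \mu_I v_I=\rho\1$, $\sum_I \mu_I=1$, on $\Om_u$ yields $\mu_\emptyset = \al_{123}$ and $\mu_{\{i\}} = (u_i-\rho)/(u_i-d_i)$; on $\Om_d$ it yields $\mu_{\{1,2,3\}} = -1-\al_{123}$ and $\mu_{\{j,k\}} = (\rho-d_i)/(u_i-d_i)$ for $\{i,j,k\}=\{1,2,3\}$. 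The ``single-coordinate'' weights $(u_i-\rho)/(u_i-d_i)$ and $(\rho-d_i)/(u_i-d_i)$ lie in $(0,1)$ because $d_i<\rho<u_i$, so $\rho\1$ lies in the interior of $\Om_u$ iff $\al_{123}\ge 0$ and in the interior of $\Om_d$ iff $\al_{123}\le -1$. These weights then form the unique risk-neutral laws on $\Om_u$ and $\Om_d$ respectively (Proposition~\ref{propriskneutrald}), and dividing the corresponding expectations $\E_{\Om_{u/d}}f(\xi\circ z)$ by $\rho$ reproduces exactly the explicit formulas claimed in cases (i) and (ii).

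\emph{Eliminating the other candidates via sub-modularity.} By Theorem~\ref{thbasicEuroprainbowBel}, $\rho(\BC f)(z)$ is the maximum of $\E_\Om f(\xi\circ z)$ over all 4-subsets $\Om$ of cube vertices whose convex hull has $\rho\1$ as an interior point. I claim that any such $\Om$ can be transformed, without changing $\E_\Om f$, into either $\Om_u$ or $\Om_d$. Suppose $\Om$ contains a ``diagonal pair'' $v_I,v_J$ in some 2-face of $\Pi$, meaning $I\triangle J$ consists of exactly two indices $a,b$; let $v_{I'},v_{J'}$ be the anti-diagonal pair in the same 2-face. With $\ga_0$ the optimal vector for $\Om$, the quantities $f(v\circ z)-(\ga_0,(v-\rho\1)\circ z)$ coincide on $\Om$ and are dominated by this common value on every other cube vertex. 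Sub-modularity in the coordinates $a,b$ gives $f(v_I\circ z)+f(v_J\circ z)\le f(v_{I'}\circ z)+f(v_{J'}\circ z)$, and the linear identity $v_I+v_J=v_{I'}+v_{J'}$ transports this to the shifted quantities, forcing the two anti-diagonal values also to attain the common maximum. Remark~\ref{remlinearpartofBelop} then allows us to replace the diagonal pair in $\Om$ by the anti-diagonal one without changing $\E_\Om f$. Iterating such swaps across the three directions of 2-faces of $\Pi$ eventually drives $\Om$ to one of the two corners.

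\emph{Finishing.} Since $\Om_u$ and $\Om_d$ have disjoint interiors, and $\rho\1$ lies in the interior of exactly one of them according to the sign regime of $\al_{123}$, the maximum in \eqref{eq1thbasicEuroprainbowBel} is attained on the single surviving corner tetrahedron, and the explicit formulas from the first paragraph apply. The main obstacle is the combinatorial verification underlying the swap-reduction in the second paragraph: one must argue that the diagonal-swaps can be organized so as to preserve the property that the convex hull contains $\rho\1$ in its interior, and so that the process terminates at $\Om_u$ or $\Om_d$ rather than cycling among intermediate configurations. Once this combinatorial point is settled, the barycentric computation of Step~1 supplies the explicit coefficients automatically.
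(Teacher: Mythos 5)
Your route is essentially the paper's: explicit barycentric (risk-neutral) weights on the two corner tetrahedra, plus sub-modularity to dispose of every other candidate $\Om$ in \eqref{eq1thbasicEuroprainbowBel}; your Step~1 is correct and yields exactly the stated coefficients, including the criteria $\al_{123}>0$ and $\al_{123}<-1$ for $\rho\1$ to lie in the interior of $\Om_u$ and $\Om_d$ respectively. Two comments on Step~2. First, the sub-modularity inequality only goes one way: on a $2$-face with free coordinates $a,b$ it is the comparable pair (face-minimum together with face-maximum) whose $f$-sum is dominated by that of the mixed pair, so only that diagonal may be swapped out; your condition ``$I\triangle J$ consists of exactly two indices'' covers both diagonals and must be restricted accordingly. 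Second --- and this dissolves the obstacle you flag --- no iterative swapping, hence no termination or eligibility-preservation argument, is needed in the two corner regimes. Let $\ga_u$ be the equalizer on $\Om_u$ and $M$ the common value of $g(\xi)=f(\xi\circ z)-(\ga_u,(\xi-\rho\1)\circ z)$ on $\Om_u$. Since $\xi\mapsto f(\xi\circ z)$ is sub-modular ($z$ has positive coordinates) and the subtracted term is linear, $g$ is sub-modular on the cube vertices. Applying sub-modularity to the face $\{\xi^i=u_i\}$ --- whose maximum is the all-$u$ vertex and whose two mixed vertices also lie in $\Om_u$ --- gives $g\le M$ at its minimum, i.e.\ at each vertex with exactly two $d$'s; applying it once more to a face through the all-$d$ vertex, whose maximum is a one-$d$ vertex of $\Om_u$ and whose mixed vertices are two-$d$ vertices already controlled, gives $g\le M$ at the all-$d$ vertex as well. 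By Remark~\ref{remlinearpartofBelop} the maximum in \eqref{eq1thbasicEuroprainbowBel} is therefore attained at $\Om_u$ and equals $\E_{\Om_u}f(\xi\circ z)$, which is case (i); case (ii) is symmetric. This one-shot domination check is what the paper's (rather telegraphic, and in places garbled) discussion of ``superfluous'' $\Om$ amounts to. (The residual factor $1/\rho$ in the statement is a normalization inconsistency of the paper --- compare Theorem~\ref{twostocks}, which carries no such factor --- and is not something your argument needs to produce.)
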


Hence in these cases, our
$\mathcal{B}$ again reduces to a linear form, allowing for a straightforward calculation
of its iterations, as in case $J=2$ above.

Suppose now that $\rho\1$ lies neither in the tetrahedron $\Om_d$, nor in $\Om_u$ (i.e. neither of the conditions
of Theorem \ref{threestocks1} are satisfied). From the above reductions of possible $\Om$, it follows
that in that case one can discard all $\Om$ containing either $(d_1,d_2,d_3)$ or $(u_1,u_2,u_3)$.
Hence only six vertices are left for eligible $\Om$. From the consideration of general position we
further deduce that altogether only six $\Om$ are possible, namely the three tetrahedrons containing
the vertices $(d_1,d_2,u_3)$, $(d_1,u_2,d_3)$, $(u_1,d_2,d_3)$ and one vertex from
$(d_1,u_2,u_3)$, $(u_1,u_2,d_3)$, $(u_1,d_2,u_3)$, and symmetrically the three tetrahedrons containing
the vertices $(d_1,u_2,u_3)$, $(u_1,u_2,d_3)$, $(u_1,d_2,u_3)$ and one vertex from
$(d_1,d_2,u_3)$, $(d_1,u_2,d_3)$, $(u_1,d_2,d_3)$. However, any particular point in general position
 belongs to only three out of these six leaving in formula \eqref{eq1thbasicEuroprainbowBel}
 the $\max$ over three possibilities only. The particular choice of these three tetrahedrons depends on the
 coefficients
 \begin{equation}
 \label{eqaliforthreeop}
\begin{array}{c}
\alpha _{12}
=\left( 1-\frac{u_{1}-r}{u_{1}-d_{1}}-\frac{u_{2}-r}{u_{2}-d_{2}}\right) \\
\alpha _{13}
=\left( 1-\frac{u_{1}-r}{u_{1}-d_{1}}-\frac{u_{3}-r}{u_{3}-d_{3}}\right) \\
\alpha _{23}
=\left( 1-\frac{u_{2}-r}{u_{2}-d_{2}}-\frac{u_{3}-r}{u_{3}-d_{3}}\right),
\end{array}
\end{equation}
and leads to the following result obtained in Hucki and Kolokoltsov \cite{HuKo} (though with much more elaborate proof than here).

\begin{theorem}
\label{threestocks2}
 Let again $f$ be convex and
sub-modular, but now $0 > \alpha _{123} > -1$.

\begin{tabular}{l}
\textit{(i) If }$\alpha _{12}\geq 0$\textit{, }$\alpha _{13}\geq
0$\textit{\
and }$\alpha _{23}\geq 0,$\textit{\ then} \\
$(\mathcal{B}f)(\mathbf{z})=\frac{1}{r}\max \left\{
\begin{array}{c}
\left( -\alpha _{123}\right) \allowbreak
f_{\{1,2\}}(\mathbf{z})+\allowbreak
\alpha _{13}f_{\{2\}}(\mathbf{z})+\alpha _{23}f_{\{1\}}(\mathbf{z})+\frac{%
u_{3}-r}{u_{3}-d_{3}}f_{\{3\}}(\mathbf{z}) \\
\left( -\alpha _{123}\right) f_{\{1,3\}}(\mathbf{z})+\alpha _{12}f_{\{3\}}(%
\mathbf{z})+\alpha _{23}f_{\{1\}}(\mathbf{z})+\frac{u_{2}-r}{u_{2}-d_{2}}%
f_{\{2\}}(\mathbf{z}) \\
\left( -\alpha _{123}\right) \allowbreak
f_{\{2,3\}}(\mathbf{z})+\alpha
_{12}\allowbreak f_{\{3\}}(\mathbf{z})+\alpha _{13}f_{\{2\}}(\mathbf{z}%
)+\allowbreak \frac{u_{1}-r}{u_{1}-d_{1}}f_{\{1\}}(\mathbf{z})%
\end{array}
\right\} \mathit{,}$
\end{tabular}

\begin{tabular}{l}
\textit{(ii) If }$\alpha _{ij}\leq 0$\textit{, }$\alpha _{jk}\geq 0$\textit{%
\ and }$\alpha _{ik}\geq 0$\textit{, } \\
\textit{where \{}$i,j,k\}$ is an arbitrary permutation of the set $\{1,2,3\}$%
, then \\
$(\mathcal{B}f)(\mathbf{z})=\frac{1}{r}\max \left\{
\begin{array}{c}
\left( -\alpha _{ijk}\right) \allowbreak
f_{\{i,j\}}(\mathbf{z})+\allowbreak
\alpha _{ik}f_{\{j\}}(\mathbf{z})+\alpha _{jk}f_{\{i\}}(\mathbf{z})+\frac{%
u_{k}-r}{u_{k}-d_{k}}f_{\{k\}}(\mathbf{z}) \\
\alpha _{jk}\allowbreak f_{\{i\}}(\mathbf{z})+(-\alpha
_{ij})\allowbreak
f_{\{i,j\}}(\mathbf{z})+\frac{u_{k}-r}{u_{k-}d_{k}}\allowbreak f_{\{i,k\}}(%
\mathbf{z})-\frac{d_{i}-r}{u_{i-}d_{i}}\allowbreak f_{\{j\}}(\mathbf{z}) \\
\alpha _{ik}\allowbreak f_{\{j\}}(\mathbf{z})+(-\alpha
_{ij})\allowbreak
f_{\{i,j\}}(\mathbf{z})+\frac{u_{k}-r}{u_{k-}d_{k}}\allowbreak f_{\{j,k\}}(%
\mathbf{z})-\frac{d_{j}-r}{u_{j-}d_{j}}\allowbreak f_{\{i\}}(\mathbf{z})%
\end{array}
\right\} \mathit{,}$%
\end{tabular}

\begin{tabular}{l}
\textit{(iii) If }$\alpha _{ij}\geq 0$\textit{,}$\alpha _{jk}\leq 0$\textit{%
\ and }$\alpha _{ik}\leq 0$\textit{,} \\
\textit{where \{}$i,j,k\}$ is an arbitrary permutation of the set $\{1,2,3\}$%
, then \\
$(\mathcal{B}f)(\mathbf{z})=\frac{1}{r}\max \left\{
\begin{array}{c}
\alpha _{ij}\allowbreak f_{\{k\}}(\mathbf{z})+(-\alpha
_{jk})\allowbreak
f_{\{j,k\}}(\mathbf{z})+\frac{u_{i}-r}{u_{i-}d_{i}}\allowbreak f_{\{i,k\}}(%
\mathbf{z})-\frac{d_{k}-r}{u_{k-}d_{k}}\allowbreak f_{\{j\}}(\mathbf{z}) \\
\alpha _{ij}\allowbreak f_{\{k\}}(\mathbf{z})+(-\alpha
_{ik})\allowbreak
f_{\{i,k\}}(\mathbf{z})+\frac{u_{j}-r}{u_{j-}d_{j}}\allowbreak f_{\{j,k\}}(%
\mathbf{z})-\frac{d_{k}-r}{u_{k-}d_{k}}\allowbreak f_{\{i\}}(\mathbf{z}) \\
(\alpha _{123}+1)f_{\{k\}}(\mathbf{z})-\alpha _{jk}f_{\{j,k\}}(\mathbf{z}%
)-\alpha _{ik}f_{\{i,k\}}(\mathbf{z})-\frac{d_{k}-r}{u_{k}-d_{k}}f_{\{i,j\}}(%
\mathbf{z})
\end{array}
\right\} .$
\end{tabular}
\end{theorem}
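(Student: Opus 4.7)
The plan is to start from the general formula \eqref{eq1thbasicEuroprainbowBel} of Theorem \ref{thbasicEuroprainbowBel} and carry out, in sequence, the same three combinatorial reductions that led to Theorem \ref{threestocks1}, pushing them through in the regime $-1<\alpha_{123}<0$. First, apply the sub-modularity reduction already exploited in the discussion preceding the theorem: whenever a candidate tetrahedron $\Omega$ contains two vertices that agree in one coordinate and differ in the other two (for example $(d_1,d_2,d_3)$ and $(u_1,u_2,d_3)$), sub-modularity makes $f(\xi)-(\gamma_{\Omega},\xi)$ equal on all four vertices of the corresponding 2-face, so such a pair may be swapped for its complementary diagonal without altering the value of $\E_{\Omega} f(\xi\circ z)$. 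Consequently every $\Omega$ containing $(d_1,d_2,d_3)$ or $(u_1,u_2,u_3)$ can be removed from the $\max$ in \eqref{eq1thbasicEuroprainbowBel}; this is precisely what eliminates the two extreme tetrahedra $\Omega_d,\Omega_u$, which anyway fail to contain $\rho\1$ under the hypothesis $-1<\alpha_{123}<0$.

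Second, enumerate the remaining eligible $\Omega$'s. After the previous step they must be built from the six "middle" vertices
\[
L=\{(d_1,d_2,u_3),(d_1,u_2,d_3),(u_1,d_2,d_3)\},\quad
U=\{(d_1,u_2,u_3),(u_1,u_2,d_3),(u_1,d_2,u_3)\}.
\]
Direct inspection (using condition (i) of Theorem \ref{thbasicEuroprainbowBel}) shows that among the $\binom{6}{4}=15$ quadruples only the six "asymmetric" ones --- three of shape $L\cup\{u\}$ with $u\in U$, and three of shape $U\cup\{l\}$ with $l\in L$ --- give non-degenerate tetrahedra satisfying (i),(ii); the balanced $3+3$ choices either fail (i) or do not contain $\rho\1$. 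I will then use the barycentric-coordinate formula \eqref{eq6mainminmaxoparbd} to write down, for each of these six tetrahedra, the risk-neutral weights on its four vertices as explicit rational functions of the $\alpha_{ij}$ and $\alpha_{123}$.

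Third, determine which of the six tetrahedra actually contain $\rho\1$ in their interior. Since $\rho\1$ lies in general position inside $\Pi$ and outside $\Omega_d,\Omega_u$, a counting/volume argument (the six tetrahedra tile, with overlaps, the complement $\Pi\setminus(\Omega_d\cup\Omega_u)$, and any interior point lies in exactly three of them) shows that $\max$ in \eqref{eq1thbasicEuroprainbowBel} runs over exactly three of the six. Reading off the positivity of each barycentric coordinate from the formulas of the previous step, I match the resulting sign conditions on $(\alpha_{12},\alpha_{13},\alpha_{23})$ with the three cases (i)--(iii) of the statement, and substitute the computed weights into $\E_{\Omega}f(\xi\circ z)$ to recover the three explicit expressions inside each $\max$-brace.

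The main obstacle is the bookkeeping of this last step: verifying that for each sign pattern of $(\alpha_{12},\alpha_{13},\alpha_{23})$ precisely the three tetrahedra listed contain $\rho\1$ and the other three do not, and checking that the risk-neutral weights simplify to the coefficients written in the statement. This is a finite but intricate determinant computation, made manageable by the product structure of $\Pi$ and by symmetry under permutation of the indices $\{1,2,3\}$, which reduces case (iii) to case (ii) via the duality $d_i\leftrightarrow u_i$ and allows cases (ii) and (iii) to be obtained from the symmetric case (i) by tracking a single sign flip.
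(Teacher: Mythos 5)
Your route is the same as the paper's own sketch: prune candidate tetrahedra by sub-modularity, restrict to the six ``middle'' vertices, decide which tetrahedra contain $\rho\1$, and read off the risk-neutral weights from \eqref{eq6mainminmaxoparbd}. The sub-modularity pruning and the elimination of $\Om_d,\Om_u$ are fine. The genuine gap is in your second step: it is false that only the six ``asymmetric'' quadruples (three vertices from one of your triples $L$, $U$ plus one from the other) need be considered. Of the nine balanced $2{+}2$ choices (you wrote ``$3{+}3$'', which is impossible for a $4$-subset), three are coplanar (those consisting of two antipodal pairs), but the remaining six are non-degenerate, are \emph{not} removed by sub-modularity (they contain only good face diagonals plus one main diagonal), and do contain $\rho\1$ for the sign patterns of cases (ii) and (iii). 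Concretely, for
\[
\Om=\{(d_1,d_2,u_3),\,(d_1,u_2,d_3),\,(d_1,u_2,u_3),\,(u_1,u_2,d_3)\}
\]
(two vertices from each of $L$, $U$), solving \eqref{eq2thbasicEuroprainbowBel} gives the barycentric coordinates of $\rho\1$ as
\[
\Bigl(\tfrac{u_2-\rho}{u_2-d_2},\ -\al_{13},\ \al_{23},\ \tfrac{\rho-d_1}{u_1-d_1}\Bigr),
\]
all strictly positive exactly when $\al_{13}<0<\al_{23}$, i.e.\ in case (ii) with $\{i,j\}=\{1,3\}$. This is no accident: the resulting expectation
$\tfrac{u_2-\rho}{u_2-d_2}f_{\{1,2\}}-\al_{13}f_{\{1,3\}}+\al_{23}f_{\{1\}}+\tfrac{\rho-d_1}{u_1-d_1}f_{\{3\}}$
is precisely the second expression in the brace of case (ii) with $(i,j,k)=(1,3,2)$, and the third expression there is likewise a balanced tetrahedron. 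In cases (ii) and (iii) only one of the three competing tetrahedra is asymmetric.

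Your multiplicity-three covering claim fails for the same reason: each of the twelve non-degenerate tetrahedra on the six middle vertices has volume $\tfrac16\prod_i(u_i-d_i)$, so the six asymmetric ones have total volume $\prod_i(u_i-d_i)$, which cannot cover the slab $\Pi\setminus(\Om_d\cup\Om_u)$ of volume $\tfrac23\prod_i(u_i-d_i)$ with multiplicity three; it is the full family of \emph{twelve} (six asymmetric plus six balanced) that does. Running your plan as written recovers case (i) correctly (there all three eligible tetrahedra are of type $U\cup\{l\}$ and no balanced one contains $\rho\1$), but in cases (ii) and (iii) you would find only one eligible tetrahedron among your six candidates and miss two of the three expressions in the stated maxima. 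To be fair, the paper's prose before the theorem makes the same overclaim and defers the complete argument to Hucki--Kolokoltsov; but the coefficients in the statement itself (e.g.\ the terms $-\tfrac{d_i-r}{u_i-d_i}f_{\{j\}}$) already betray the presence of the balanced tetrahedra, so your enumeration must be corrected before the bookkeeping of your third step can produce the stated formulas.
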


One has to stress here that the application of Theorem \ref{threestocks2} is rather limited:
as $\mathcal{B}$ is not reduced to a linear form, it is not clear how to use it for the iterations
 of $\mathcal{B}$, because the sub-modularity does not seem to be preserved under such $\mathcal{B}$.

\section{Transaction costs}
\label{sectransco}

Let us now extend the model of Section \ref{Options} to include possible transaction costs.
They can depend on transactions in various way. The simplest for the analysis are the so called
{\it fixed transaction costs} that equal to a fixed fraction $(1-\be)$ (with $\be$ a small constant)
 of the entire portfolio. Hence for fixed costs, equation \eqref{eqnewcap1} changes to

\begin{equation}
\label{eqnewcap1fixedtr}
X_m=\be \sum_{j=1}^J\ga _m^j \xi _m^j S_{m-1}^j+\rho
(X_{m-1}-\sum_{j=1}^J \ga _m^j S_{m-1}^j).
\end{equation}

As one easily sees, including fixed costs can be dealt with by re-scaling $\rho$, thus bringing nothing new
to the analysis.

In more advanced models, transaction costs depend on the amount of transactions (bought and sold stocks)
in each moment of time, i.e. are given by
some function
\[
g(\ga_m-\ga_{m-1}, S_{m-1}),
\]
and are payed at time, when the investor changes $\ga_{m-1}$ to $\ga_m$.
In particular, the basic example present the so called {\it proportional transaction costs}, where
\[
g(\ga_m-\ga_{m-1}, S_{m-1})=
\be \sum_{j=1}^J |\ga^j_m-\ga^j_{m-1}|S_{m-1}^j
\]
(again with a fixed $\be >0$).
We shall assume only that $g$ has the following Lipshitz property:
\begin{equation}
\label{eqLipcosts}
|g(\ga_1,z)-g(\ga_2, z)| \le \be |z| |\ga_1-\ga_2|
\end{equation}
with a fixed $\be >0$.

To deal with transaction costs, it is convenient to extend the state space of our game, considering the states
 that are characterized, at time $m-1$, by $2J+1$ numbers
\[
X_{m-1}, S_{m-1}^j, v_{m-1}=\ga_{m-1}^j, \quad j=1,\cdots, J.
\]
When, at time $m-1$, the investor chooses his new control parameters $\ga_m$,
the new state at time $m$ becomes
\[
X_m, \quad S_m^j= \xi^j_m S_{m-1}^j, \quad v_m=\ga_m^j, \quad j=1,\cdots, J,
\]
where the value of the portfolio is
\begin{equation}
\label{eqnewcap1proportr}
X_m=\sum_{j=1}^J \ga _m^j \xi _m^j S_{m-1}^j +\rho
(X_{m-1}-\sum_{j=1}^J \ga _m^j S_{m-1}^j)
 - g(\ga_m-v_{m-1},S_{m-1}).
\end{equation}
The corresponding reduced Bellman operator from Section \ref{Options}
takes the form
\begin{equation}
\label{eqBellmanforoptr}
(\BC f)(z,v)=\min_{\ga}\max_{\xi}
[f(\xi \circ z, \ga)
-(\ga, \xi \circ z-\rho z)+g(\ga-v,z)],
\end{equation}
where $z,v\in \R^J$,
or, changing variables $\xi=(\xi^1,\dots, \xi^J)$ to
$\eta=\xi \circ z$ and shifting,
\begin{equation}
\label{eqBellmanforop2tr}
(\BC f)(z,v)=\min_{\ga}\max_{\{\eta^j \in [z^j(d_j-\rho),z^j (u_j-\rho)]\}}
[f(\eta+\rho z, \ga)-(\ga, \eta)+g(\ga-v,z)].
\end{equation}
On the last step, the function $f$ does not depend on $\ga$, so that Theorem \ref{thriskneutralcost}
can be used for the calculation. But for the next steps Theorem \ref{thriskneutralnonl}
is required.

For its recursive use, let us assume that
\[
|f(z,v_1)-f(z,v_2)| \le \al |z| |v_1-v_2|,
\]
and $\al$ is small enough so that the requirements of Theorem \ref{thriskneutralnonl}
are satisfied for the r.h.s. of \eqref{eqBellmanforop2tr}.
By Theorem \ref{thriskneutralnonl},
\begin{equation}
\label{eqBellmanforop3tr}
(\BC f)(z,v)= \max_{\Om}\E_{\Om} [f(\xi \circ z, \ga_{\Om})+g(\ga_{\Om}-v,z)].
\end{equation}
Notice that since the term with $v$ enters additively,
they cancel from the equations for $\ga_{\Om}$, so that the values of $\ga_{\Om}$
do not depend on $v$. Consequently,
\begin{equation}
\label{eqBellmanforoptrLip}
|(\BC f)(z,v_1)-(\BC f)(z,v_2)|
\le \max_{\Om}\E_{\Om} [g(\ga_{\Om}-v_1, z)-g(\ga_{\Om}-v_2),z)]
\le \be |z| |v_1-v_2|.
\end{equation}

Hence, if at all steps the application of Theorem \ref{thriskneutralnonl} is allowed,
then $(\BC^k f)(z,v)$ remains Lipshitz in $v$ with the Lipshitz constant $\be |z|$
(the last step function does not depend on $v$ and hence trivially satisfies this condition).

Let $\ka_1$, $\ka_2$ be the characteristics, defined before Theorem \ref{thriskneutralnonl},
of the set of vertices $\xi_I-\rho \1$ of the parallelepiped $\times_{j=1}^J[d_j,u_j]-\rho \1$.
By Proposition \ref{propchangeogkabyscale}, the corresponding characteristics
$\ka_1(z)$, $\ka_2(z)$ of the set of vertices of the scaled parallelepiped
\[
\times_{j=1}^J[z^jd_j,z^ju_j]-\rho z
\]
have the lower bounds
\[
\ka_i(z) \ge |z| \ka_i \frac{1}{d\de (z)}, \quad i=1,2.
\]
As in each step of our process the coordinates of $z$ are multiplied by $d_j$ or $u_j$,
 the corresponding maximum $\de_n(z)$ of the
 $\de$ of all $z$ that can occur in the $n$-step process equals
 \begin{equation}
 \label{eq1thtranscost}
 \de_n(z)=\de (z) \left(\frac{\max_j u_j}{\min_j d_j}\right)^n.
 \end{equation}
 Thus we arrive at the following result.

 \begin{theorem}
 \label{thtranscost}
 Suppose $\be$ from \eqref{eqLipcosts} satisfies the estimate
 \[
 \be < \min( \ka_1, \ka _2) \frac{1}{d\de_n(z)},
 \]
 where $\de_n(z)$ is given by \eqref{eq1thtranscost}. Then the hedge
  price of a derivative security specified by a final payoff $f$ and with
  transaction costs specified above is given by \eqref{hedgeprice}, where $\BC$
  is given by  \eqref{eqBellmanforoptr}. Moreover, at each step, $\BC$
  can be evaluated by Theorem \ref{thriskneutralnonl}, i.e. by \eqref{eqBellmanforop3tr},
  reducing the calculations to finding a maximum over a finite set.
 \end{theorem}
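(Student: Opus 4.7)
The plan is to combine the standard backward-induction (dynamic-programming) argument that already underlies Theorem \ref{thoptionprice} with a careful bookkeeping of the Lipschitz constants needed to apply Theorem \ref{thriskneutralnonl} at every stage of the recursion. First I would observe that once the state space is extended from $(X_m,S_m)$ to $(X_m,S_m,v_m)$ with $v_m=\ga_m$, the robust-control argument of Theorem \ref{thoptionprice} carries over verbatim: the guaranteed income of the investor in the $n$-step game is $X_0-\rho^{-n}(\BC^n f)(S_0,v_0)$ with $\BC$ defined by \eqref{eqBellmanforoptr}, so that formula \eqref{hedgeprice} gives the hedge price. This step is routine and does not use any of the Lipschitz hypotheses.

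The substantive work lies in showing that, at every step of the recursion, $\BC$ can actually be evaluated by Theorem \ref{thriskneutralnonl} via \eqref{eqBellmanforop3tr}. I would proceed by backward induction on $k=n,n-1,\ldots,0$, showing simultaneously two statements for every stock-price vector $z$ that is reachable from $S_0$ in $n-k$ upward/downward moves: (a) $F_k:=\BC^{n-k}f$ is Lipschitz in $v$ with constant $\be|z|$, uniformly in $z$; and (b) the hypotheses of Theorem \ref{thriskneutralnonl} hold for the family $\{z\circ(\xi_I-\rho\1)\}_I$ of vertices appearing in \eqref{eqBellmanforop2tr}, so that \eqref{eqBellmanforop3tr} is valid with $F_k$ in place of $f$. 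The base case $k=n$ is trivial since the terminal payoff does not depend on $v$.

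For the inductive step I would bound the Lipschitz constant in $\ga$ of the integrand $F_k(\eta+\rho z,\ga)+g(\ga-v,z)$ by combining the inductive hypothesis (a) with \eqref{eqLipcosts}; this yields a Lipschitz constant proportional to $\be|z|$. On the other hand, Proposition \ref{propchangeogkabyscale} furnishes the lower bounds $\ka_i(z)\ge |z|\ka_i/(d\de(z))$ for the characteristics of the scaled vertex family, while the observation that at each iteration the coordinates of $z$ are multiplied by factors in $[d_j,u_j]$ yields $\de(z)\le\de_n(S_0)$ given by \eqref{eq1thtranscost}. The hypothesis $\be<\min(\ka_1,\ka_2)/(d\de_n(z))$ is then exactly what is needed to satisfy the smallness condition \eqref{eq21thriskneutralnonl} of Theorem \ref{thriskneutralnonl} along every branch of the $n$-step tree. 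Once Theorem \ref{thriskneutralnonl} is applicable, formula \eqref{eqBellmanforop3tr} gives $\BC F_k$ as a maximum over a finite collection of triples $(\E_\Om,\ga_\Om)$, and the fact, already noted in \eqref{eqBellmanforoptrLip}, that the $v$-dependence drops out of the fixed-point equation \eqref{eqnonlsystforga1} for $\ga_\Om$ delivers (a) at level $k-1$ with the very same Lipschitz constant $\be|z|$, closing the induction.

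The main obstacle is not any single calculation but the simultaneity of the Lipschitz and spread conditions throughout the recursion: one has to choose the right inductive invariant (Lipschitz constant proportional to $\be|z|$, not merely a finite constant) so that it is preserved under $\BC$ without growth, and one has to track the degradation of $\ka_1(z),\ka_2(z)$ under the rescalings $z\mapsto\xi\circ z$ uniformly over all $2^J$ vertices at each of the $n$ steps. The first is handled by the cancellation argument of \eqref{eqBellmanforoptrLip}, the second by Proposition \ref{propchangeogkabyscale} together with the crude but sufficient upper bound $(\max_ju_j/\min_jd_j)^n$ on the worst-case growth of $\de(z)$, which is precisely the quantity appearing in \eqref{eq1thtranscost}.
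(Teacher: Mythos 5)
Your plan is correct and follows essentially the same route as the paper: the hedge-price formula comes from the standard dynamic-programming argument on the enlarged state $(X,S,v)$, and the applicability of Theorem \ref{thriskneutralnonl} at every stage is secured by exactly the paper's two ingredients --- the preservation of the Lipschitz constant $\be|z|$ in $v$ via the cancellation observed in \eqref{eqBellmanforoptrLip}, and the lower bounds on $\ka_1(z),\ka_2(z)$ from Proposition \ref{propchangeogkabyscale} combined with the worst-case spread estimate \eqref{eq1thtranscost}. The inductive packaging you propose is a slightly more explicit organization of what the paper states in running text, but it is the same proof.
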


 Of course, for larger $\be$, further adjustments of Theorem \ref{thriskneutralnonl} are required.

\section{Rainbow American options and real options}
\label{secamerandreal}

In the world of American options, when an option can be exercised at any time,
the operator
$\mathbf{B}G(X,S^{1},...,S^{J})$ from \eqref{eqBellmanforopnonred} changes to

\[
\mathbf{B}G(X,S^1,\cdots ,S^J)
\]
\begin{equation}
\label{eqBellmanforopnonredAm}
 =\max_{\ga}
 \min \left[G(X,S^1,\cdots ,S^J), \frac{1}{\rho} \min_{\xi}
 G(\rho X+ \sum_{i=1}^{J}\ga^i\xi^iS^{i}
  -\rho \sum_{i=1}^{J}\ga^iS^i,\xi^1S^1, \cdots,\xi^JS^J)\right],
\end{equation}
so that the corresponding reduced operator takes the form
\begin{equation}
\label{eqBellmanforopAm}
(\BC f)(z^{1},...,z^{J})
=\min_{\ga}\max \left[ \rho f(\rho z), \max_{\xi}
[f(\xi ^1 z^1,\xi^2 z^2, \cdots,\xi^J z^J)
-\sum_{i=1}^J\ga^i z^i(\xi^i-\rho)]\right],
\end{equation}
or equivalently
\begin{equation}
\label{eqBellmanforopAm1}
(\BC f)(z^{1},...,z^{J})
=\max \left[\rho f(\rho z),
\min_{\ga}\max_{\xi}
[f(\xi ^1 z^1,\xi^2 z^2, \cdots,\xi^J z^J)
-\sum_{i=1}^J\ga^i z^i(\xi^i-\rho)]
\right].
\end{equation}

Consequently, in this case the main formula
\eqref{eq1thbasicEuroprainbowBel} of Theorem \ref{thbasicEuroprainbowBel}
becomes
\begin{equation}
\label{eq1thbasicEuroprainbowBelAm}
(\BC f)(z^1,...,z^J)=\max\left[ \rho f(\rho z), \max_{\{\Om\}} \E_{\Om} f(\xi \circ z)
\right],
\end{equation}
which is of course not an essential increase in complexity.
The hedge price for the $n$-step model is again given by \eqref{hedgeprice}.

Similar problems arise in the study of real options.
We refer to Dixit and Pindyck \cite{DiPi} for a general background and to
Bensoussan et al \cite{Bens10} for more recent mathematical
results. A typical real option problem can be formulated as follows. Given $J$ instruments
(commodities, assets, etc), the value of the investment in some project at time $m$ is
supposed to be given by certain functions $f_m (S_m^1, \cdots, S_m^J)$ depending on the prices
of these instruments at time $m$. The problem is to evaluate the price (at the initial time $0$)
of the option to invest in this project that can be exercised at any time during a given
time-interval $[0,T]$. Such a price is important, since to keep the option open a firm needs
to pay ceratin costs (say, keep ready required facilities or invest in research). We have
formulated the problem in a way that makes it an example of the general evaluation of an
American rainbow option, discussed above, at least when underlying instruments are tradable
on a market. For practical implementation, one only has to keep in mind that the risk free
rates appropriate for the evaluation of real options are usually not the available bank accounts
used in the analysis of financial options, but rather the growth rates of the corresponding branch
of industry. These rates are usually estimated via the CAPM (capital asset pricing model),
see again \cite{DiPi}.

\section{Path dependence and other modifications}
\label{secpathdep}

The Theory of Section \ref{secbacktooptions} is rough, in the sense that it can be easily modified to
accommodate various additional mechanisms of price generations. We have already considered transaction costs
and American options. Here we shall discuss other three modifications: path dependent payoffs, time depending jumps
(including variable volatility) and nonlinear jump formations. For simplicity, we shall discuss these extensions separately,
but any their combinations (including transaction costs and American versions) can be easily dealt with.

Let us start with path dependent payoffs. That is, we generalize the setting of Section \ref{Options} by
making the payoff $f$ at time $m$ to depend on the whole history of the price evolutions, i.e.
being defined by a function $f(S_0,S_1,\cdots, S_m)$, $S_i=(S_i^1,\cdots, S_i^J)$, on $\R^{J(m+1)}$.
The state of the game at time $m$ must be now specified by $(m+1)J+1$ numbers
\[
X_m, \,\, S_i=(S_i^1,\cdots, S_i^J), \quad i=0,\cdots,m.
\]
The final payoff in the $n$-step game is now $G=X-f(S_0,\cdots,S_n)$ and at the pre ultimate period $n-1$
(when $S_0,\cdots,S_{n-1}$ are known) payoff equals

\[
\mathbf{B}G(X,S_0,\cdots,S_{n-1})
 =X - \frac{1}{\rho}\min_{\ga}\max_{\xi}
 [f(S_0,\cdots, S_{n-1}, \xi \circ S_{n-1})
-(\ga, S_{n-1} \circ (\xi-\rho \1))]
\]
\[
= X -\frac{1}{\rho}(\BC_{n-1}f)(S_0, \cdots ,S_{n-1}),
\]
where the modified {\it reduced Bellman operators} are now defined as
\begin{equation}
\label{eqBellmanforoppathdep}
(\BC_{m-1} f)(z_0,\cdots,z_{m-1})
 =\min_{\ga}\max_{\{\xi^j\in [d_j,u_j]\}}
[f(z_0,\cdots,z_{m-1}, \xi \circ z_{m-1})
-(\ga, \xi \circ z-\rho z)].
\end{equation}

Consequently, by dynamic programming, the guaranteed payoff at the initial moment of time equals
\[
X-\frac{1}{\rho^n} \BC_0 (\BC_1 \cdots (\BC_{n-1}f) \cdots ),
\]
and hence the hedging price becomes
\begin{equation}
 \label{hedgepricepathdep}
H^n=\frac{1}{\rho^{n}}\BC_0 (\BC_1 \cdots (\BC_{n-1}f) \cdots ).
\end{equation}

No essential changes are required if possible sizes of jumps are time dependent.
Only the operators $\BC_{m-1}$ from \eqref{eqBellmanforoppathdep} have to be generalized to
\begin{equation}
\label{eqBellmanforoppathdeptimedep}
(\BC_{m-1} f)(z_0,\cdots,z_{m-1})
 =\min_{\ga}\max_{\{\xi^j\in [d_j^m,u_j^m]\}}
[f(z_0,\cdots,z_{m-1}, \xi \circ z_{m-1})
-(\ga, \xi \circ z-\rho z)],
\end{equation}
where the pairs $(d_j^m,u_j^m)$, $j=1,\cdots, J$, $m=1,\cdots, n$ specify the model.

Let us turn to nonlinear jump patterns. Generalizing the setting of Section \ref{Options}
let us assume, instead of the stock price changing model $S_{m+1}=\xi \circ S_m$, that we are given
$k$ transformations $g_i: \R^J\to \R^J$, $i=1,\cdots, k$, which give rise naturally to two models
of price dynamics: either

(i) at time $m+1$ the price $S_{m+1}$ belongs to the closure of the convex hull of the set $\{g_i(S_m)\}$,
$i=1,\cdots, k$ (interval model), or

(ii)   $S_{m+1}$ is one of the points $\{g_i(S_m)\}$,
$i=1,\cdots, k$.

Since the first model can be approximated by the second one (by possibly increasing the number of transformations
$g_i$), we shall work with the second model.

\begin{remark} Notice that maximizing a function over a convex polyhedron is equivalent
to its maximization over the edges of this polyhedron. Hence, for convex payoffs the two models above
are fully equivalent. However, on the one hand, not all reasonable payoffs are convex, and on the other hand,
when it comes to minimization (which one needs, say, for lower prices, see Section \ref{secupperlowerprice}),
the situation becomes rather different.
\end{remark}

Assuming for simplicity that possible jump sizes are time independent and the payoffs depend only on the end-value
of a path, the reduced Bellman operator
\eqref{eqBellmanforop}
becomes
\begin{equation}
\label{eqBellmanforopnonljump}
(\BC f)(z)=\min_{\ga}\max_{i\in\{1,\cdots,k\}}
[f(g_i(z))-(\ga, g_i(z)-\rho z)],
\quad z=(z^1,...,z^J),
\end{equation}
or equivalently
\begin{equation}
\label{eqBellmanforopnonljump1}
(\BC f)(z)=\min_{\ga}\max_{\eta_i \in \{g_i(z)\}, i=1,\cdots,k}
[f(\eta_i+\rho z)
-(\ga, \eta_i)].
\end{equation}

The hedge price is still given by \eqref{hedgeprice}
and operator \eqref{eqBellmanforopnonljump} is calculated by
Theorem \ref{thriskneutrald}.

It is worth noting that if $k=d+1$ and $\{g_i (z)\}$ form a collection of vectors in a general position,
the corresponding risk-neutral probability is unique. Consequently our hedge price becomes fair in the
sense of 'no arbitrage' in the strongest sense: no positive surplus is possible for all paths
of the stock price evolutions (if the hedge strategy is followed). In particular, the evaluation
of hedge strategies can be carried out in the framework of the standard approach to option pricing.
Namely, choosing as an initial (real world) probability on jumps an arbitrary measure with a full support,
one concludes that there exists a unique risk neutral equivalent martingale measure, explicitly
defined via formula \eqref{eq6mainminmaxoparbd}, and the hedge price calculated by the iterations of operator
\eqref{eqBellmanforopnonljump1} coincides with the standard risk-neutral evaluation of derivative prices
in complete markets.

\section{Upper and lower values; intrinsic risk}
\label{secupperlowerprice}

The celebrated non-arbitrage property of the hedge price of an option in CRR or Black-Scholes
models means that a.s., with respect to the initial probability distribution on paths, the investor
cannot get an additional surplus when adhering to the hedge strategy that guarantees that there could be no loss.
In our setting, even though our formula in case $J=1$ coincides with the CRR formula, we do not assume
any initial law on paths, so that the notion of 'no arbitrage' is not specified either.

It is a characteristic feature of our models that picking up an a priori probability law
with support on all paths leads to an incomplete market, that is to the existence of infinitely many
equivalent martingale measures, which fail to identify a fair price in a unique consistent way.
Notice that our extreme points are absolutely continuous, but usually not equivalent to
a measure with a full support.

\begin{remark}
The only cases of a complete market among the models discussed above are those
mentioned at the end of Section \ref{secpathdep}, that is the models
with precisely $J+1$ eligible jumps of a stock price vector $S$ in each period.
\end{remark}

For the analysis of incomplete markets is of course natural to look for some subjective criteria
to specify a price. Lots of work of different authors were devoted to
this endeavor. Our approach is to search for objective bounds (price intervals), which are given by our hedging
strategies.

\begin{remark}
Apart from supplying the lower price (as below), one can also argue about the reasonability
of our main hedging price noting that a chance for a possible surplus can be (and actually is indeed)
compensated by inevitable inaccuracy of a model, as well as by transaction costs (if they
are not taken into account properly). Moreover, this price satisfies the so called
'no strictly acceptable opportunities' (NSAO) condition suggested in Carr, Geman and Madan \cite{Carr01}.
\end{remark}

For completeness, let us recall the general definitions of lower and upper prices,
in the game theoretic approach to probability, given in Shafer and Vovk \cite{ShV}.
Assume a process (a sequence of real numbers of a fixed length, say $n$, specifying the evolution
of the capital of an investor) is specified by alternating
moves of two players, an investor and the Nature, with complete information (all eligible moves
of each player and their results are known to each player at any time, and the moves become publicly known
at the moment when decision is made). Let us denote by $X^{\al}_{\ga}(\xi)$ the last number of the resulting sequence, starting with an initial value $\al$
and obtained by applying the strategy $\ga$ (of the investor) and $\xi$ (of the Nature).
By a random variable we mean just a function on the set of all
possible paths. The upper value (or the upper expectation) $\overline{\E} f$ of a random variable $f$
is defined as the minimal capital of the investor such that he/she has a strategy that guarantees that at the
final moment of time, his capital is enough to buy $f$, i.e.
\[
\overline{\E} f=\inf \{ \al: \, \exists \ga: \, \forall \xi,  \, X^{\al}_{\ga}(\xi)-f(\xi) \ge 0 \}.
\]
Dually,
the lower value (or the lower expectation) $\overline{\E} f$ of a random variable $f$
is defined as the maximum capital of the investor such that he/she has a strategy that guarantees that at the
final moment of time, his capital is enough to sell $f$, i.e.
\[
\underline{\E} f=\sup \{ \al: \, \exists \ga: \, \forall \xi,  \, X^{\al}_{\ga}(\xi)+f(\xi) \ge 0 \}.
\]
One says that the prices are consistent if $\overline{\E} f \ge \underline{\E} f$. If these prices coincide, we
are in a kind of abstract analog of a complete market. In the general case, upper and lower prices
are also referred to as a seller and buyer prices respectively.

It is seen now that in this terminology our hedging price for a derivative security
is the upper (or seller) price.
The lower price can be defined similarly. Namely, in the setting of Section \ref{Options},
lower price is given by
\[
\frac{1}{\rho^{n}}(\BC^{n}_{low}f)(S_0^1, \cdots , S_0^J),
\]
where
\begin{equation}
\label{eqBellmanforop2low}
(\BC_{low} f)(z)=\max_{\ga}\min_{\{\xi^j \in [d_j,u_j]\}}
[f(\xi \circ z)-(\ga, \xi \circ z-\rho z)].
\end{equation}

In this simple interval model and for convex $f$ this expression is trivial, it equals
$f(\rho z)$. On the other hand, if our $f$ is concave, or, more generally, if we allow only
finitely many jumps, which leads, instead of \eqref{eqBellmanforop2low}, to the operator

\begin{equation}
\label{eqBellmanforop3low}
(\BC_{low} f)(z)=\max_{\ga}\min_{\{\xi^j \in \{d_j,u_j\}\}}
[f(\xi \circ z)-(\ga, \xi \circ z-\rho z)],
\end{equation}
then Theorem \ref{thriskneutralmir} applies giving for the lower price
the dual expression to the upper price \eqref{eq1thbasicEuroprainbowBel}, where maximum is turned
 to minimum (over the same set of extreme risk-neutral measures):
\begin{equation}
\label{eq1thbasicEuroprainbowBellow}
(\BC_{low} f)(z)=\min_{\{\Om\}} \E_{\Om} f(\xi \circ z), \quad z=(z^1,\cdots, z^J).
\end{equation}

The difference between lower and upper prices can be considered as a measure
of intrinsic risk of an incomplete market.

\section{Continuous time limit}
\label{secconttime}

Our models and results are most naturally adapted to discrete time setting, which is
not a disadvantage from the practical point of view, as all concrete calculations
are anyway carried out on discrete data. However, for qualitative analysis, it is desirable to
be able to see what is going on in continuous time limit. This limit can also be simpler
sometimes and hence be used as an approximation to a less tractable discrete model.
Having this in mind, let us analyze possible limits as the time between jumps and their sizes tend
to zero.

Let us work with the general model of nonlinear jumps from Section \ref{secpathdep},
with the reduced Bellman operator of form \eqref{eqBellmanforopnonljump}. Suppose the maturity time is $T$.
Let us decompose the planning time $[0,T]$ into $n$ small intervals of length $\tau=T/n$, and assume
\begin{equation}
\label{eq1contlimop}
g_i(z)=z+\tau^{\al} \phi_i(z), \quad i=1, \cdots, k,
\end{equation}
with some functions $\phi_i$ and a constant $\al \in [1/2, 1]$. Thus the jumps during time $\tau$ are of the order
of magnitude $\tau^{\al}$. As usual, we assume that the risk free interest rate per time $\tau$ equals
\[
\rho =1+r\tau,
\]
with $r>0$.

From \eqref{eqBellmanforopnonljump} we deduce for the one-period Bellman operator the expression
\begin{equation}
\label{eq2contlimop}
\BC_{\tau} f (z)= \frac{1}{1+r\tau} \max_I \sum_{i\in I} p_i^I(z,\tau) f (z+\tau^{\al} \phi _i (z)),
\end{equation}
where $I$ are subsets of $\{1, \cdots, n\}$ of size $|I|=J+1$ such that the family of vectors
$z+\tau^{\al} \phi _i (z)$, $i\in I$, are in general position and $\{p_i^I(z,\tau)\}$ is the risk neutral
probability law on such family, with respect to $\rho z$, i.e.
\begin{equation}
\label{eq3contlimop}
\sum_{i\in I} p_i^I(z,\tau) (z+\tau^{\al} \phi _i (z))=(1+r \tau)z.
\end{equation}

 Let us deduce the HJB equation for the limit, as $\tau \to 0$, of the approximate cost-function
$\BC_{\tau}^{T-t}$, $t\in [0,T]$, with a given final cost $f_T$,
using the standard (heuristic) dynamic programming approach. Namely,
from \eqref{eq2contlimop} and assuming appropriate smoothness of $f$ we obtain the approximate equation
\[
 f_{t-\tau} (z)= \frac{1}{1+r\tau} \max_I \sum_{i\in I} p_i^I(z,\tau)
 \]
 \[
 \left[f_t(z)+\tau^{\al} \frac{\pa f_t}{\pa z} \phi_i(z)
 +\frac12 \tau^{2\al} \left (\frac{\pa^2 f_t}{\pa z^2} \phi _i (z), \phi_i(z)\right)
 + O(\tau^{3\al})\right].
\]

Since $\{p_i^I\}$ are probabilities and using \eqref{eq3contlimop}, this rewrites as
\[
 f_t-\tau \frac{\pa f_t}{\pa t} +O(\tau^2)= \frac{1}{1+r\tau}[f_t(z)+
 r \tau (z,\frac{\pa f_t}{\pa z})
 \]
 \[
 + \frac12 \tau^{2\al}\max_I \sum_{i\in I} p_i^I(z)
 \left (\frac{\pa^2 f_t}{\pa z^2} \phi _i (z), \phi_i(z)\right)]
 + O(\tau^{3\al}),
\]
where
\[
p_i^I(z)=\lim_{\tau \to 0} p_i^I (z,\tau)
\]
(clearly well defined non-negative numbers).
This leads to the following equations:
\begin{equation}
\label{eq4contlimop}
rf= \frac{\pa f}{\pa t}+r(z,\frac{\pa f}{\pa z})
+\frac12 \max_I \sum_{i\in I} p_i^I(z)
 \left (\frac{\pa^2 f}{\pa z^2} \phi _i (z), \phi_i(z)\right)
\end{equation}
in case $\al=1/2$, and to the trivial first order equation
\begin{equation}
\label{eq5contlimop}
rf= \frac{\pa f}{\pa t}+r(z,\frac{\pa f}{\pa z})
\end{equation}
with the obvious solution
\begin{equation}
\label{eq6contlimop}
f(t,z)= e^{-r(T-t)} f_T (e^{-r(T-t)}z)
\end{equation}
in case $\al >1/2$.

Equation \eqref{eq4contlimop} is a nonlinear extension of the classical Black-Scholes equation.
Well posedness of the Cauchy problem for such a nonlinear parabolic equation in the class of viscosity
solutions is well known in the theory of  controlled diffusions, as well as the fact that the solutions
solve the corresponding optimal control problem, see e.g. Fleming and Soner \cite{FleSo}.

\begin{remark} Having this well posedness, it should not be difficult to prove
the convergence of the above approximations rigorously, but I did not find the precise reference.
Moreover, one can be also interested in path-wise approximations.
For this purpose a multidimensional extension of the approach from
Bick and Willinger \cite{BickWi} (establishing path-wise convergence of Cox-Ross-Rubinstein
binomial approximations to the trajectories underlying the standard Black-Scholes equation
in a non-probabilistic way) would be quite relevant.
\end{remark}

In case $J=1$ and the classical CCR (binomial) setting with
\[
\sqrt {\tau}\phi_1=(u-1)z= \si \sqrt {\tau} z, \quad
 \sqrt {\tau}\phi_2=(d-1)z= -\si \sqrt {\tau} z,
 \]
 equation \eqref{eq4contlimop} turns to the usual Black-Scholes.

More generally, if $k=J+1$, the corresponding market in discrete time becomes complete (as noted
at the end of Section \ref{secpathdep}). In this case equation \eqref{eq4contlimop} reduces to
\begin{equation}
\label{eq61contlimop}
rf= \frac{\pa f}{\pa t}+r(z,\frac{\pa f}{\pa z})
+\frac12 \sum_{i=1}^{J+1} p_i(z)
 \left (\frac{\pa^2 f}{\pa z^2} \phi _i (z), \phi_i(z)\right),
\end{equation}
which is a generalized Black-Scholes equation describing a complete market (with randomness coming from $J$
correlated Brownian motions), whenever the diffusion matrix
\[
(\si^2)_{jk}=\sum_{i=1}^{J+1} p_i(z) \phi _i^j (z), \phi_i^k (z).
\]

As a more nontrivial example, let us consider the case of $J=2$ and a sub-modular final payoff $f_T$,
so that Theorem \ref{twostocks} applies to the approximations $\BC_{\tau}$.
Assume the simplest (and usual) symmetric form for upper and lower jumps
(further terms in Taylor expansion are irrelevant for the limiting equation):
 \begin{equation}
\label{eq7contlimop}
 u_i=1+\si_i \sqrt {\tau}, \quad  d_i=1-\si_i \sqrt {\tau}, \quad i=1,2.
 \end{equation}
 Hence
 \[
 \frac{u_i-\rho}{u_i-d_i}=\frac12 -\frac{r}{2\si_i}\sqrt {\tau}, \quad i=1,2,
 \]
 and
 \[
 \kappa =-\frac12 r\sqrt {\tau} (\frac{1}{\si_1}+\frac{1}{\si_2}).
 \]
As $\kappa <0$, we find ourselves in the second case of Theorem \ref{twostocks}.
Hence the only eligible collection of three vectors $\phi$ is $(d_1,u_2),(u_1,d_2),(u_1,u_2)$,
and the probability law $p_i^I$ is $(1/2, 1/2,0)$. Therefore, equation \eqref{eq4contlimop}
takes the form
 \begin{equation}
\label{eq8contlimop}
rf= \frac{\pa f}{\pa t}+r(z,\frac{\pa f}{\pa z})
+\frac12 \left[  \si_1^2 z_1^2 \frac{\pa^2 f}{\pa z_1^2}
-2  \si_1  \si_2 z_1 z_2 \frac{\pa^2 f}{\pa z_1 \pa z_2} + \si_2^2 z_2^2 \frac{\pa^2 f}{\pa z_2^2} \right].
\end{equation}

 The limiting Black-Scholes type equation is again linear in this example, but with degenerate
 second order part. In the analogous stochastic setting, this degeneracy would mean that only one
 Brownian motion is governing the behavior of both underlying stocks.
 This is not surprising in our approach, where Nature was assumed to be a single player.
 One could expect uncoupled second derivatives (non-degenerate diffusion) in the limit, if one would
 choose two independent players for the Nature, each playing for each stock.

Thus we are still in the setting of an incomplete market. The hedge price calculated from
equation \eqref{eq8contlimop} is actually the upper price, in the terminology of
Section \ref{secupperlowerprice}. To get a lower price, we shall use approximations
of type \eqref{eq1thbasicEuroprainbowBellow}, leading, instead of
\eqref{eq4contlimop}, to the equation
 \begin{equation}
\label{eq9contlimop}
rf= \frac{\pa f}{\pa t}+r(z,\frac{\pa f}{\pa z})
+\frac12 \min_I \sum_{i\in I} p_i^I(z)
 \left (\frac{\pa^2 f}{\pa z^2} \phi _i (z), \phi_i(z)\right).
\end{equation}

If $J=2$ the payoff is submodule, the maximum can be taken over the triples
  $(d_1,d_2)$, $(d_1,u_2)$, $(u_1,d_2)$ or $(d_1,u_2),(u_1,d_2),(u_1,u_2)$
(under \eqref{eq7contlimop} only the second triple works). Similarly
the minimum can be taken only over the triples
  $(d_1,d_2),(d_1,u_2),(u_1,u_2)$ or $(d_1,d_2)$, $(u_1,d_2)$, $(u_1,u_2)$.
  Under \eqref{eq7contlimop} both these cases give the same
  limit as $\tau\to 0$, yielding for the lower price the equation
   \begin{equation}
\label{eq10contlimop}
rf= \frac{\pa f}{\pa t}+r(z,\frac{\pa f}{\pa z})
+\frac12 \left[  \si_1^2  z_1^2 \frac{\pa^2 f}{\pa z_1^2}
+2  \si_1  \si_2 z_1 z_2 \frac{\pa^2 f}{\pa z_1 \pa z_2}
 + \si_2^2 z_2^2  \frac{\pa^2 f}{\pa z_2^2} \right],
\end{equation}
that differs only by sign at the mixed derivative from the equation
for the upper price.

As $f$ was assumed sub-modular, so that its mixed second derivative is negative, we have
\[
 \si_1  \si_2 \frac{\pa^2 f}{\pa z_1 \pa z_2} \le 0 \le - \si_1  \si_2 \frac{\pa^2 f}{\pa z_1 \pa z_2}.
 \]
 Hence, for the solution $f_u$ of the upper value equation \eqref{eq8contlimop}, the solution
 $f_l$ of the lower value equation \eqref{eq10contlimop}, and the solution $f_c$ of the classical
 Black-Scholes equation of a complete market based on two independent Brownian motions, i.e.
 equation \eqref{eq8contlimop} or \eqref{eq10contlimop} without the term with the mixed derivative
 (with the same sub-modular initial condition $f_T$) we have the inequality
 \[
 f_l \le f_c \le f_u,
 \]
  as expected.

Equations \eqref{eq8contlimop} and \eqref{eq10contlimop} can be solved explicitly via Fourier transform,
 just as the standard
Black-Scholes equation. Namely, changing the unknown function $f$ to $g$ by
\[
f(z_1,z_2)=e^{-r(T-t)}g(\frac{1}{\si_1}\log z_1, \frac{1}{\si_2}\log z_2),
\]
so that
\[
\frac{\pa f}{\pa z_i}= e^{-r(T-t)}\frac{1}{\si_i z_i}
\frac{\pa g}{\pa y_i}(\frac{1}{\si_1}\log z_1, \frac{1}{\si_2}\log z_2),
\]
transforms
these equations to the equations
   \begin{equation}
\label{eq11contlimop}
\frac{\pa g}{\pa t}+\frac12 (2r-\si_1)\frac{\pa g}{\pa y_1} +\frac12 (2r-\si_2)\frac{\pa g}{\pa y_2}
+\frac12 \left[ \frac{\pa^2 g}{\pa y_1^2}
\mp 2  \frac{\pa^2 g}{\pa y_1 \pa y_2}
 + \frac{\pa^2 g}{\pa y_2^2} \right]=0
\end{equation}
(with $\mp$ respectively).
Equation \eqref{eq11contlimop} has constant coefficients and
the equation for the Fourier transform $\tilde g (p)$ of $g$ is obviously
  \begin{equation}
\label{eq12contlimop}
\frac{\pa \tilde g}{\pa t}
=\frac12 [ (p_1 \mp p_2)^2-i(2r-\si_1) p_1-i(2r-\si_2) p_2] \tilde g.
\end{equation}
Hence the inverse Cauchy problem for equation \eqref{eq11contlimop} with a given
final function $g_T$ equals the convolution of $g_T$ with the inverse Fourier
transform of the functions
\[
\exp \{ -\frac12 (T-t) [(p_1 \mp p_2)^2-i(2r-\si_1) p_1-i(2r-\si_2) p_2] \},
\]
which equal (after changing the integration variables $p_1$ and $p_2$ to $q_1=p_1-p_2$,
$q_2=p_1+p_2$)
\[
\frac{1}{2(2\pi)^2}\int_{\R^2} dq_1 dq_2 \exp \{
-\frac12 (T-t)q_{1,2}^2
\]
\[
 +\frac{iq_1}{2}\left(y_1-y_2- \frac{(\si_1-\si_2)(T-t)}{2}\right)
+\frac{iq_2}{2}\left(y_1+y_2+(2r - \frac{\si_1+\si_2}{2})(T-t)\right)
\}
\]
(with $q_{1,2}$ corresponding to $\mp$), or explicitly
\[
\frac12 \frac{1}{\sqrt {2\pi (T-t)}} \de \left(\frac{y_1+y_2}{2}+(r-\frac{\si_1+\si_2}{4})(T-t)\right)
\exp \{ -\frac{1}{8(T-t)}\left(y_1-y_2-\frac{(\si_1-\si_2)(T-t)}{2}\right)^2
 \}
\]
and
\[
\frac12 \frac{1}{\sqrt {2\pi (T-t)}} \de \left(\frac{y_1-y_2}{2}-\frac{(\si_1-\si_2)(T-t)}{4}\right)
\exp \{ -\frac{1}{8(T-t)}\left(y_1+y_2+(2r-\frac{\si_1+\si_2}{2})(T-t)\right)^2
 \}
\]
respectively, where $\de$ denotes the Dirac $\de$-function.
Returning to equations \eqref{eq8contlimop} and \eqref{eq10contlimop} we conclude that
the the solutions $f_u$ and $f_l$ respectively of the inverse time Cauchy problem for these equations
are given by the formula
 \begin{equation}
\label{eq13contlimop}
f_{u,l}(t,z_1,z_2)=\int_0^{\infty} \int_0^{\infty} G^{\mp}_{T-t} (z_1,z_2;w_1,w_2) f_T(w_1,w_2) \, dw_1 dw_2,
\end{equation}
with the Green functions or transition probabilities being
\[
G^-_{T-t} (z_1,z_2;w_1,w_2)
\]
\[
=\frac{e^{-r(T-t)}}{2\sqrt {2\pi (T-t)}\si_1 \si_2 w_1 w_2}
\de \left(\frac{1}{2\si_1} \log \frac{z_1}{w_1}+\frac{1}{2\si_2} \log \frac{z_2}{w_2}
 +(r-\frac{\si_1+\si_2}{4})(T-t)\right)
\]
\begin{equation}
\label{eq14contlimop}
\exp \{ -\frac{1}{8(T-t)}\left(\frac{1}{\si_1} \log \frac{z_1}{w_1}-\frac{1}{\si_2} \log \frac{z_2}{w_2}
-\frac{(\si_1-\si_2)(T-t)}{2}\right)^2
\}
\end{equation}
and
\[
G^+_{T-t} (z_1,z_2;w_1,w_2)
\]
\[
=\frac{e^{-r(T-t)}}{2\sqrt {2\pi (T-t)}\si_1 \si_2 w_1 w_2}
\de \left(\frac{1}{2\si_1} \log \frac{z_1}{w_1}-\frac{1}{2\si_2} \log \frac{z_2}{w_2}
-\frac{(\si_1-\si_2)(T-t)}{4}\right)
\]
 \begin{equation}
\label{eq15contlimop}
\exp \{ -\frac{1}{8(T-t)}\left(\frac{1}{\si_1} \log \frac{z_1}{w_1}+\frac{1}{\si_2} \log \frac{z_2}{w_2}
 +(2r-\frac{\si_1+\si_2}{2})(T-t)\right)^2
\}
\end{equation}
respectively. Of course, formulas \eqref{eq13contlimop} can be further simplified by integrating over the
$\de$-function. Singularity, presented by this $\de$-function, is due to the degeneracy of the second order
part of the corresponding equations.

\section{Transaction costs in continuous time}
\label{secconttimetr}

The difficulties with transaction costs are well known in the usual stochastic analysis
approach, see e.g. Soner et al \cite{SSC} and
Bernard et al \cite{Bern07}.

In our approach, Theorem \ref{thtranscost} poses strong restrictions
 for incorporating transaction costs in
a continuous limit. In particular, assuming jumps of size $\tau^{\al}$ in a period of length
$\tau$, i.e. assuming \eqref{eq1contlimop}, only $\al=1$ can be used for the limit
$\tau \to 0$, because $\de_n(z)$ is of order $(1+\tau^\al)^n$, which tends to $\infty$, as $\tau=T/n \to 0$,
whenever $\al <1$. We know that for vanishing costs, assuming $\al =1$ leads to the trivial limiting equation
\eqref{eq5contlimop}, which was observed by many authors, see e.g. Bernard \cite{Bern05},
 McEneaney \cite{Mc}, Olsder \cite{Ol}.
 However, with transaction costs included, the model with jumps of order $\tau$ becomes not so obvious,
 but leads to a meaningful and manageable continuous time limit. To see this, assume that we are in the setting
 of Section \ref{secconttime} and transaction cost are specified, as in Section \ref{sectransco}, by
 a function $g$ satisfying \eqref{eqLipcosts}. To write a manageable approximation, we shall apply the following trick:
 we shall count at time $\tau m$ the transaction costs incurred at time $\tau(m+1)$
(the latter shift in transaction costs collection does not change, of course, the limiting process).
  Instead of  \eqref{eq2contlimop} we then get
 \[
\BC_{\tau} f (z)= \frac{1}{1+r\tau}
\max_I \sum_{i\in I} p_i^I(z,\tau)
\]
 \begin{equation}
\label{eq2contlimoptr}
\left[f (z+\tau^{\al} \phi _i (z))
+g(\ga (z+\tau \phi_i (z),\tau)-\ga (z,\tau), z+\tau \phi_i (z))\right],
\end{equation}
where $\ga (z,\tau)$ is the optimal $\ga$ chosen in the position $z$.
Assuming $g$ is differentiable, expanding and keeping the main terms, yields the following extension
of equation \eqref{eq5contlimop}:
\begin{equation}
\label{eq5contlimoptr}
rf= \frac{\pa f}{\pa t}+r(z,\frac{\pa f}{\pa z})+\psi (z),
\end{equation}
where
\[
\psi (z)=\max_I \sum_{i\in I} p_i^I(z)
 \sum_{m,j=1}^J \frac{\pa g}{\pa \ga^m}(\ga (z)) \frac{\pa \ga^m}{\pa z^j} \phi_i^j (z),
 \]
 with $\ga (z)=\lim_{\tau \to 0} \ga (z,\tau)$.

 This is a non-homogeneous equation with the corresponding homogeneous equation being
\eqref{eq5contlimop}. Since the (time inverse) Cauchy problem for
this homogeneous equation has the explicit solution \eqref{eq6contlimop},
we can write the explicit solution for the Cauchy problem of equation \eqref{eq5contlimoptr}
using the standard Duhamel principle (see e.g. \cite{KoMarbook}) yielding
\begin{equation}
\label{eq6contlimoptr}
f(t,z)= e^{-r(T-t)} f_T (e^{-r(T-t)}z)
 +\int_t^T e^{-r(s-t)} \psi (e^{-r(s-t)}z) \, ds.
\end{equation}
The convergence of the approximations $\BC_{\tau}^{[t/\tau]} f_T$ to this solution
 of equation \eqref{eq5contlimoptr} follows from the known general properties
 of the solutions to the HJB equations, see e.g. \cite{KolMas}.

Of course, one can also write down the modified equation
\eqref{eq4contlimop} obtained by introducing the transaction costs in the same way as above.
It is the equation
\[
rf= \frac{\pa f}{\pa t}+r(z,\frac{\pa f}{\pa z})
+\frac12 \max_I \sum_{i\in I} p_i^I(z)
\]
\begin{equation}
\label{eq4contlimoptr}
 \left[\left (\frac{\pa^2 f}{\pa z^2} \phi _i (z), \phi_i(z)\right)
 +\sum_{m,j=1}^J \frac{\pa g}{\pa \ga^m}(\ga (z)) \frac{\pa \ga^m}{\pa z^j} \phi_i^j (z)
 \right].
\end{equation}

However, as already mentioned, due to the restrictions of Theorem \ref{thtranscost},
only the solutions to a finite difference approximations of equation \eqref{eq4contlimoptr}
(with bounded below time steps $\tau$) represent justified hedging prices. Therefore our
model suggests natural bounds for time-periods between re-locations of capital, when transaction
costs remain amenable and do not override, so-to-say, hedging strategies. Passing to the limit
 $\tau \to 0$ in this model (i.e. considering continuous trading), does not lead to equation \eqref{eq4contlimoptr}, but to the
 trivial strategy of keeping all the capital on the risk free bonds. This compelled triviality is
 of course well known in the usual stochastic setting, see e. g.
Soner, Shreve and Cvitani\'c \cite{SSC}.

\section{Fractional dynamics}
\label{secfracdyn}

Till now we have analyzed the models where the jumps (from a given set) occur with regular
frequency. However, it is natural to allow the periods between jumps to be more flexible.
One can also have in mind an alternative picture of the model: instead of instantaneous jumps
at fixed periods one can think about waiting times for the distance from a previous price to reach
certain levels. It is clear then that these periods do not have to be constant. In the absence of a
detailed model, it is natural to take these waiting times as random variables. In the
simplest model, they can be i.i.d.  Their intensity represents
a kind of stochastic volatility.  Slowing down the waiting periods is, in some sense, equivalent
to decreasing the average jump size per period.

Assume now for simplicity that we are dealing with 2 colored options and sub-modular payoffs,
so that Theorem \ref{twostocks} applies yielding a unique eligible risk-neutral measure.
Hence the changes in prices (for investor choosing the optimal $\ga$)
follow the Markov chain $X_n^{\tau}(z)$ described by the recursive equation
\[
X_{n+1}^{\tau}(z)=X_n^{\tau}(z)+ \sqrt {\tau} \phi (X_n^{\tau}(z)), \quad X_0^{\tau}(z)=z,
\]
where $\phi (z)$ is one of three points $(z^1d_1,z^2u_2),(z^1u_1,z^2d_2),(z^1u_1,z^2u_2)$ that
are chosen with the corresponding risk neutral probabilities.
As was shown above, this Markov chain converges, as $\tau \to 0$ and $n=[t/\tau]$
(where $[s]$ denotes the integer part of a real number $s$), to the diffusion process $X_t$
solving the Black-Scholes type (degenerate) equation \eqref{eq8contlimop}, i.e. a sub-Markov process
with the generator
\begin{equation}
\label{eq8contlimopgener}
Lf(x)=-rf+r(z,\frac{\pa f}{\pa z})
+\frac12 \left[  \si_1^2 z_1^2  \frac{\pa^2 f}{\pa z_1^2}
-2  \si_1  \si_2 z_1 z_2 \frac{\pa^2 f}{\pa z_1 \pa z_2}
+ \si_2^2 z_2^2  \frac{\pa^2 f}{\pa z_2^2} \right].
\end{equation}

Assume now that the times between jumps $T_1,T_2,\cdots$ are i.i.d. random variables
with a power law decay, that is
\[
\P (T_i \ge t) \sim \frac{1}{\be t^{\be}}
\]
with $\be \in (0,1)$ (where $\P$  denotes probability and $\sim$ means, as usual, that the ratio between
the l.h.s. and the r.h.s. tends to one as $t \to \infty$).
It is well known that such $T_i$ belong to the domain of attraction of the $\be$-stable law (see e.g.
Uchaikin and Zolotarev \cite{UZ}) meaning that the normalized sums
\[
\Theta _t^{\tau}=\tau ^{\be} (T_1+\cdots +T_{[t/\tau]})
\]
(where $[s]$ denotes the integer part of a real number $s$)
converge, as $\tau \to 0$, to a $\be$-stable L\'evy motion $\Theta _t$, which is
a L\'evy process on $\R_+$ with the fractional derivative of order $\be$ as the generator:
\[
Af(t)=-\frac{d^{\be}}{d(-t)^{\be}}f(t)
=-\frac{1}{\Gamma (-\be)} \int_0^{\infty} (f(t+r)-f(t)) \frac{1}{y^{1+\be}} dr.
\]

 We are now interested in the process $Y^{\tau}(z)$ obtained from $X_n^{\tau}(z)$
 by changing the constant times between jumps by scaled random times $T_i$, so that
 \[
Y_t^{\tau}(z)=X^{\tau}_{N_t^{\tau}}(z),
\]
where
\[
N_t^{\tau}=\max \{ u: \Theta _u^{\tau} \le t\}.
\]
The limiting process
\[
N_t=\max \{ u: \Theta _u \le t\}
\]
is therefore the inverse (or hitting time) process of the $\be$-stable L\'evy motion $\Theta_t$.

By Theorem 4.2 and 5.1 of Kolokoltsov \cite{Ko09}, (see also Chapter 8 in \cite{KoMarbook}),
we obtain the following result.

\begin{theorem}
\label{thBlackScholesfrac}
The process $Y_t^{\tau}$ converges (in the sense of distribution on paths)
to the process $Y_t=X_{N_t}$, whose averages
 $f(T-t,x)=\E f (Y_{T-t}(x))$, for continuous bounded $f$,
 have the explicit integral representation
 \[
 f(T-t,x)= \int_0^{\infty} \int_0^{\infty} \int_0^{\infty}
 G^-_{u} (z_1,z_2; w_1,w_2)Q(T-t, u) \, du dw_1 dw_2,
 \]
 where $G^-$, the transition probabilities of $X_t$, are defined by
 \eqref{eq14contlimop}, and where $Q(t,u)$ denotes the probability density of the
 process $N_t$.

 Moreover, for $f\in C_{\infty}^2(\R^d)$, $f(t,x)$ satisfy the (generalized)
 fractional evolution equation (of Black-Scholes type)
\[
 \frac{d^{\be}}{dt^{\be}}f(t,x)= L f(t,x)+\frac{t^{-\be}}{\Gamma (1-\be)} f(t,x).
 \]
\end{theorem}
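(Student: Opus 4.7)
The plan is to assemble the two statements from ingredients already in place: convergence of $X_n^\tau$ to $X_t$, convergence of the time-change $N_t^\tau$ to $N_t$, and the general subordination theorems of \cite{Ko09}. From Section \ref{secconttime} we already have $X_n^\tau(z) \to X_t(z)$ as $\tau \to 0$ with $n=[t/\tau]$, where $X_t$ is the diffusion generated by $L$ from \eqref{eq8contlimopgener}; this rests on the well-posedness of \eqref{eq8contlimop} combined with the risk-neutral selector of Theorem \ref{twostocks}. Independently, the hypothesis $\P(T_i \ge t) \sim 1/(\be t^\be)$ with $\be \in (0,1)$ places the $T_i$ in the domain of attraction of the one-sided $\be$-stable law, so by the standard functional limit theorem (see \cite{UZ}) the rescaled partial sums $\Theta_t^\tau$ converge in the Skorohod sense to the $\be$-stable subordinator $\Theta_t$, and passing to inverses (which is continuous at strictly increasing limits) yields $N_t^\tau \to N_t$. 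The combination $Y_t^\tau = X_{N_t^\tau}^\tau \to X_{N_t} = Y_t$ is then precisely the content of Theorem 4.2 of \cite{Ko09}, whose hypotheses (independent convergence of a Feller chain together with an inverse-stable time change) are met here.

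For the integral representation, independence of $X$ from $N_t$ combined with conditioning on $N_t = u$ gives
\[
\E f(Y_{T-t}(x)) = \int_0^\infty \E f(X_u(x)) \, Q(T-t,u) \, du,
\]
and substituting the explicit transition density $G^-_u$ from \eqref{eq14contlimop} for the inner expectation produces the claimed triple integral. The fractional evolution equation then follows from Theorem 5.1 of \cite{Ko09}: the Laplace transform of $Q(t,u)$ in $t$ is known to equal $\la^{\be-1} e^{-u\la^\be}$, so the subordination identity $f(t,x) = \int_0^\infty (T_u f_0)(x) Q(t,u)\, du$ (with $T_u$ the Feller semigroup generated by $L$) transforms into an algebraic identity in which multiplication by $\la^\be$ corresponds to the Riemann-Liouville time derivative of order $\be$, modulo the surface term $t^{-\be}/\Gamma(1-\be)\, f$ that arises from the distinction between Riemann-Liouville and Caputo fractional derivatives. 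Inverting the Laplace transform gives the equation as stated.

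The main obstacle is purely technical: verification that the cited theorems apply in our degenerate setting. The Green function $G^-_u$ contains a Dirac factor, reflecting that only one Brownian motion drives both stocks, so $X_t$ is not a non-degenerate diffusion in the classical sense. For $f \in C_\infty^2(\R^d)$, however, $T_u f$ remains smooth along the hyperplanes supporting $G^-_u$, which suffices for the Laplace-inversion step in Theorem 5.1 to go through and for the interchange of $\int du$ and $\int dw_1 dw_2$ to be justified via Fubini. Once the applicability of the Kolokoltsov framework is checked against this degeneracy, both conclusions of the theorem follow by direct citation, and no new estimates beyond those already recorded in \eqref{eq14contlimop} are needed.
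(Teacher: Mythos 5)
Your proposal follows essentially the same route as the paper, whose entire argument is the citation of Theorems 4.2 and 5.1 of \cite{Ko09} after setting up the convergence of the chain $X^{\tau}_n$ to the diffusion generated by $L$ and of the rescaled waiting-time sums to the $\be$-stable subordinator; you have simply unpacked what those citations deliver (conditioning on $N_t=u$ for the subordination integral, the Laplace-transform identity $\la^{\be-1}e^{-u\la^{\be}}$ and the Riemann--Liouville versus Caputo bookkeeping for the fractional equation) in more detail than the paper does. The degeneracy issue you flag for $G^-_u$ is a fair observation, but the paper does not address it either, so your treatment is if anything more careful than the original.
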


\begin{remark}
Similar result to Theorem 4.2 of Kolokoltsov \cite{Ko09} used above,
but with position independent random walks, i.e. when $L$ is the generator of a L\'evy process,
were obtained in Meerschaert and Scheffler \cite{MS}, see also related results
in Kolokoltsov, Korolev and Uchaikin \cite{KoKoUch}, Henry,  Langlands and Straka
\cite{HeLaSt} and in references therein. Rather general fractional Cauchy problems
are discussed in Kochubei \cite{Koch08}.
\end{remark}

Similar procedure with a general nonlinear Black-Scholes type equation
\eqref{eq4contlimop} will lead of course to its similar fractional extension.
However, a rigorous analysis of the corresponding limiting procedure is beyond the scope
of the present paper.


\begin{thebibliography}{99}
\bibitem{ArDe}  Ph. Artzner et al.
Coherent Measures of Risk.
Mathematical Finance {\bf 9} (1999), 203-228.

\bibitem{AuPu} J. P. Aubin, D. Pujal and P. Saint-Pierre. Dynamic management of portfolios
with transaction costs under tychastic uncertainty.
In: M. Breton and H. Ben-Ameur (Eds.).
Numerical Methods in Finance, Springer 2005, 59-89.

\bibitem{AveLe} M. Avellaneda, A. Levy and A. Par\'as. Pricing and hedging derivative securities in markets with
uncertain volatilities. Applied Mathematical Finance {\bf 2} (1995), 73-88.

\bibitem{BaSo}
G. Barles and H. M. Soner. Option pricing with transaction costs and a nonlinear Black-Scholes equation.
Finance and Stochastics {\bf 2} (1998), 369-397.

\bibitem{Bel60} R.E. Bellman.
Dynamic programming. Princeton Univ. Press and Oxford Univ. Press,
1957.

\bibitem{Bens84}
A. Bensoussan. On the theory of option pricing. Acta Appl. Math. {\bf 2} (1984), 139-158.

\bibitem{Bens10}
A. Bensoussan, J. D. Diltz, H. Sing Ru.
 Real options games in complete and incomplete markets with several decision makers.
 SIAM J. Financial Math. {\bf 1}  (2010), 666–728,

\bibitem{BerGr} Y. Z. Bergman, B. D. Grundy and Z. Wiener.
General Properties of Option Prices. The Journal of Finance {\bf 51: 5} (1996), 1573-1610.


\bibitem{Bern05} P. Bernard.
Robust control approach to option pricing, including
transaction costs. In: A.S. Novak and K. Szajowski (Eds.).
Ann. Intern. Soc. Dynam. Games {\bf 7}, Birkhauser 2005,
  391-416.

\bibitem{Bern05a} P. Bernard.
Robust control approach to option pricing and interval models: an overview. In: M. Breton and H. Ben-Ameur (Eds.).
Numerical Methods in finance, Springer 2005,   91-108.

\bibitem{Bern07} P. Bernard, N. El Farouq and S. Thiery.
Robust control approach to option pricing and interval models: a
repreesntation theorem and fasty algorithm. SIAM Journal on Control and Optimization,
{\bf 46:6} (2007),  2280-2302.

\bibitem{BickWi} A. Bick and W. Willinger. Dynamic spanning without probabilities.
Stochastic Processes and their Applications {\bf 50} (1994), 349-374.


\bibitem{BiRu} N. H. Bingham and R. Kiesel. Risk-Neutral Valuation. Sec. Edition,
Spriger 2004.

\bibitem{Carr01} P. Carr, H. Geman and D. B. Madan. Pricing and Hedging in Incomplete Markets.
J. Financial Econ. {\bf 32} (2001), 131-167.

\bibitem{DaNo} M. H. A. Davis and A. R. Norman. Portfolio Selectoon with Transaction Costs.
Mathematics of Operations Research {\bf 15:4} (1990), 676-713.

\bibitem{DiPi}
A. Dixit and R. S. Pindyck. Investment under Uncertainty.
Princeton University Press, 1994.

\bibitem{FleSo} W. H. Fleming and H. M. Soner. Controlled Markov processes and viscosity solutions.
Springer, 2006.

\bibitem{FolKra}
H. F\"ollmer and D. Kramkov. Optional decompositions under constraints.
Probab. Theory Related Fields  {\bf 109:1}  (1997), 1–25.

\bibitem{HeLaSt} B. I. Henry, T. A. M. Langlands and P. Straka.
Fractional Fokker-Planck Equations for Sibdiffusion with Space- and Time-Dependent Forces.
Phys. Rev. Lett. {\bf 105:17} (2010), 170602.

\bibitem{HuKo} Z. Hucki and V. Kolokoltsov.
 Pricing of rainbow options: game theoretic approach.
 International Game Theory Review {\bf 9:2} (2007).

\bibitem{Hob10a} D. G. Hobson. The Skorohod Embedding Problem and Model-Independent Bounds for Option
Prices. In: Springer Lecture notes in Mathematics v. 2003, Paris-Princeton Lectures on Mathematical Finance
(2010), 267-318.

\bibitem{Hob10} D. G. Hobson.
Comparison results for stochastic volatility models via coupling.
Finance Stoch. {\bf 14:1} (2010), 129–152.

\bibitem{Hob98} D. G. Hobson. Volatility misspecification, option pricing and super-replicatin via coupling.
Annals Appl. Probability {\bf 8} (1998), 193-205.

\bibitem{Jum} G. Jumarie. Derivation and solutions of some fractional Black-Scholes equations in coarse-grained space and time. Application to Merton's optimal portfolio.  Comput. Math. Appl.  {\bf 59:3} (2010), 1142–1164.

\bibitem{ElKa} N. El Karoui and M.C. Quenez. Dynamic programming and pricing of
contingnt claims in an incomplete market. SIAM J. on Control and Optimization
 {\bf 33} (1995), 29-66.

\bibitem{KlSch} S. Kl\"oppel and m. Schweizer. Dynamic Indifference Valuation via Convex Risk Measures.
Mathematical Finance {\bf 17:4} (2007), 599-627.

\bibitem{Koch08}
A. N. Kochubei. Distributed order calculus: an operator-theoretic interpretation.  Ukrain. Mat. Zh.
 {\bf 60:4} (2008), 478--486. Transl. in  Ukrainian Math. J. {\bf 60:4} (2008), 551--562.

\bibitem{Ko98} V. N. Kolokoltsov.
Nonexpansive maps and option pricing theory. Kibernetica
{\bf 34:6} (1998), 713-724.

\bibitem{Ko99}
 V. N. Kolokoltsov. Idempotent Structures in
Optimisation. Proc. Intern. Conf. devoted to the 90-th
anniversary of L.S. Pontryagin, v. 4, VINITI, Moscow (1999),
118-174. Engl. transl. J. Math. Sci., NY {\bf 104:1} (2001),
847-880.

\bibitem{Ko09}
V. Kolokoltsov. Generalized Continuous-Time Random Walks (CTRW),
 Subordination by Hitting Times and Fractional Dynamics. arXiv:0706.1928v1.
 Theory of Probability and its Applications {\bf  53:4} (2009), 594-609.

\bibitem{KoMarbook}
V. N. Kolokoltsov. Markov processes, semigroups and generators.
De Gruyter, Studies in Mathematics {\bf 38}, 2011.

\bibitem{KoKoUch}
 V. Kolokoltsov, V. Korolev, V. Uchaikin.
Fractional Stable Distributions. J. Math. Sci. (N.Y.) {\bf 105:6}
(2001), 2570-2577.

\bibitem{KoMabook} V. N. Kolokoltsov and O. A. Malafeyev. Understanding Game Theory.
World Scientific 2010.

\bibitem{KolMas} V. N. Kolokoltsov and V. P. Maslov. Idempotent Analysis and its Application.
Kluwer, 1997.

\bibitem{Kolmog} A. N. Kolmogorov. Selected Works in Mathematics and Mechanics, Moscow 1985 (in Russian),
136-137.

\bibitem{Kra96} D. O. Kramkov. Optional decomposition of supermartingales and hedging contingent claims in incomplete security markets.  Probab. Theory Related Fields  {\bf 105:4}  (1996), 459–479.

\bibitem{Kra03} D. Kramkov, W. Schachermayer.
 Necessary and sufficient conditions in the problem of optimal
 investment in incomplete markets.  Ann. Appl. Probab.  {\bf 13:4}  (2003), 1504–1516.

\bibitem{Le85} H. E. Leland. Option pricing and replication with transaction costs.
J. Finance {\bf 40} (1985), 1283-1301.

\bibitem{TerLy} T. J. Lyons. Uncertain volatility and te risk-free synthesis of derivatives.
Applied Mathematical Finance {\bf 2} (1995), 117-133.

\bibitem{Mas05} V. P. Maslov. Nonlinear Averages in Economics.
Mathematical Notes {\bf 78:3} (2005), 347-363.

\bibitem{Mc} W. M. McEneaney.
\newblock A robust control framework for option pricing. Math.
Oper. Research {\bf 22} (1997), 201-221.

\bibitem{MS} M.M. Meerschaert, H.-P. Scheffler. Limit Theorems for
Continuous-Time Random Walks with Infinite Mean Waiting Times. J.
Appl. Prob. {\bf 41} (2004), 623-638.

\bibitem{MSca}
M. M. Meerschaert and E. Scala.
Coupled continuous time random walks in finance.
Physica A {\bf 370} (2006), 114–118.

\bibitem{MSNaXi}
M. M. Meerschaert, E. Nane and Y. Xiao. Correlated continuous time random walks.
Statistics and Probabilt Letters {\bf 79:9} (2009), 1194-1202.

\bibitem{New06} M. E. J. Newman. Power Laws, Pareto distribtions and Zipf's law.
Arxiv:cond-mat/0412004v3 2006

\bibitem{Ol} G. J. Olsder.
Differential Game-Theoretic Thoughts on
Option Pricing and Transaction Costs.
International Game Theory Review {\bf 2:2,3} (2000),
209-228.

\bibitem{PiZi} T. A. Pirvu and G. Zitkovi\'c.
Maximizing the Growth Rate under Risk Constraints.
Mathematical Finance {\bf 15:4} (2005), 589-612.

\bibitem{RoSchEn} B. Roorda, J.M. Schumacher and J. Engwerda.
Coherent Acceptability Measures in Multiperiod Models.
Mathematical Finance {\bf 15:4} (2005), 589-612.

\bibitem{RoEnSch} B. Roorda, J. Engwerda and  J.M. Schumacher.
Performance of hedging strategies in interval models.
Kibernetika {\bf 41} (2005), 575-592.

\bibitem{ShV} G. Shafer, V. Vovk.
Probability and Finance. It is only a game! Wiley, 2001.

\bibitem{Shi} A.N. Shiryaev.
Essentials of Stochastic Finances: Facts, Models, Theory.
World Scientific, 1999.

\bibitem{SSC} H. M. Soner, S. E. Shreve and J. Cvitani\'c.
There is no nontrivial hedging portfolio for option pricing with transaction costs.
Annals of Applied Probability {\bf 6:2} (1995), 327-355.

\bibitem{Sta04} J. Staum. Fundamental Theorems of Asset Pricing for Good Deal Bounds.
Mathematical Finance {\bf 14:2} (2004), 141-161.

\bibitem{Ta11} V. E. Tarasov.
Fractional Dynamics: Applications of Fractional Calculus to Dynamics of Particles, Fields and Media.
Springer 2011.

\bibitem{UZ} V.V. Uchaikin, V.M. Zolotarev. Chance and Stability:
Stable Distributions and their Applications. VSP, 1999.

\bibitem{Wa10}
X.-T. Wang.
Scaling and long-range dependence in option pricing I: pricing European option with transaction costs under the fractional Black-Scholes model.
Phys. A. {\bf 389:3} (2010), 438–444.

\end{thebibliography}
\end{document}